\renewcommand{\mathbb}{\mathds}
\newcommand{\R}{\mathbb R}
\newcommand{\N}{\mathbb N}
\newcommand{\cco}{\llbracket}
\newcommand{\ccf}{\rrbracket}
\newcommand{\dd}{\mathop{}\!\mathrm{d}}
\newcommand{\1}{\mathbb{1}}
\newcommand{\TV}{\textnormal{TV}}
\newcommand{\na}{\nabla}
\DeclareMathOperator{\Law}{Law}
\DeclareMathOperator{\Expect}{\mathbb E}
\DeclareMathOperator{\Proba}{\mathbb P}
\numberwithin{equation}{section}
\newtheorem{thm}{Theorem}[section]
\newtheorem{cor}[thm]{Corollary}
\newtheorem{prop}[thm]{Proposition}
\theoremstyle{definition}
\newtheorem{assu}[thm]{Assumption}
\theoremstyle{remark}
\newtheorem{rem}[thm]{Remark}
\title{Time-uniform log-Sobolev inequalities\\and applications
to propagation of chaos}
\author[1]{Pierre Monmarché}
\author[2]{Zhenjie Ren}
\author[3]{Songbo Wang}
\affil[1]{LJLL \& LCT, Sorbonne Université, Paris, France}
\affil[2]{LaMME, Université d'Évry, Université Paris-Saclay, Évry, France}
\affil[3]{CMAP, École polytechnique, IP Paris, Palaiseau, France}
\begin{document}

\maketitle

\begin{abstract}
Time-uniform log-Sobolev inequalities (LSI) satisfied by solutions of semi-linear mean-field equations have recently appeared to be a key tool to obtain time-uniform propagation of chaos estimates. This work  addresses the more general settings of time-inhomogeneous Fokker--Planck equations.
Time-uniform LSI are obtained in two cases,
either with the bounded-Lipschitz perturbation argument
with respect to a reference measure,
or with a coupling approach at high temperature.
These arguments are then applied to mean-field equations,
where, on the one hand, sharp marginal propagation of chaos estimates are obtained in smooth cases
and, on the other hand, time-uniform global propagation of chaos is shown in the case of
vortex interactions with quadratic confinement potential on the whole space. In this second case, an important point is to establish global gradient and Hessian estimates, which is of independent interest. We prove these bounds in the more general situation of non-attractive logarithmic and Riesz singular interactions.
\end{abstract}

\section{Introduction}

We are interested in  families $(m_t)_{t\geqslant 0}$ of probability distributions solving time-inhomo\-geneous Fokker--Planck equations on $\R^d$ of the form
\begin{equation}\label{eq:FP_inhomogeneous}
      \partial_t m_t = \nabla \cdot(
  \sigma^2 \na m_t - b_{t} m_t )\,,
\end{equation}
where $\sigma^2 >0$ and $b_t :\R^d \to \R^d$ for $t\geqslant 0$. This describes the evolution of the law of the diffusion process
\begin{equation}
    \label{eq:EDS_inhomogeneous}
\dd X_t = b_{t}(X_t) \dd t + \sqrt 2\sigma \dd B_t\,,
\end{equation}
where $B$ is a standard $d$-dimensional Brownian motion. We have particularly in mind McKean--Vlasov equations, where $b_t$ is in fact a function of $m_t$ itself, namely
\begin{equation}
    \label{eq:McK-drift}
    b_t(x) = F(x,m_t)\,,
\end{equation}
for some suitable function $F$. Other examples are time-integrated McKean--Vlasov equations where $b_t(x) = F\bigl(x,\int_0^t m_s k_t(\dd s)\bigr)$ for some kernel $k_t$ (as in \cite{CdRDMTT}).

Denoting by $\mathcal C^1_\textnormal{c}(\R^d)$ the set of compactly supported $\mathcal C^1$ functions from $\R^d$ to $\R$, a probability measure $\mu$ on $\R^d$ is said to satisfy a log-Sobolev inequality (LSI) with constant $C>0$ if
\begin{equation}
\label{eq:LSI}
\forall h\in\mathcal C^1_\textnormal{c}(\R^d)~\text{with}\,
\int_{\R^d} h^2 \dd \mu=1\,, \qquad
\int_{\R^d} h^2 \ln(h^2) \dd \mu \leqslant C \int_{\R^d}|\na h|^2 \dd \mu\,.
\end{equation}
Equivalently, for all probability measure $\nu \in \mathcal P(\mathbb R^d)$
such that $\nu$ is absolutely continuous with respect to $\mu$
and $\sqrt{\dd \nu /\!\dd \mu} \in \mathcal C^1_\textnormal{c}$, we have
\[
\mathcal H (\nu | \mu) \leqslant \frac{C}{4} \mathcal I(\nu | \mu)\,,
\]
where $\mathcal H$, $\mathcal I$ are the relative entropy and Fisher information
defined respectively as follows:
\begin{align*}
\mathcal H (\nu | \mu)
&\coloneqq \int_{\R^d} \ln \frac{\dd \nu}{\dd\mu} \dd\nu\,, \\
\mathcal I (\nu | \mu)
&\coloneqq \int_{\R^d} \biggl| \nabla \ln \frac{\dd\nu}{\dd\mu}\bigg|^2\dd\nu\,.
\end{align*}
We want to determine suitable conditions under which the family $(m_t)_{t\geqslant0}$ solving \eqref{eq:FP_inhomogeneous} satisfies a uniform LSI, in the sense that \eqref{eq:LSI} holds with $\mu=m_t$ and a constant $C$ independent from $t$. As will be discussed below in details (in Sections~\ref{sec:MkV-smooth} and \ref{sec:MkV-log-Riesz}), for McKean--Vlasov equations, this is  an important tool to get uniform-in-time   Propagation of Chaos (PoC) estimates \cite{GLBMVortex,LLFSharp}.

The paper is organized as follows. In the rest of this introduction we state our main results concerning time-uniform LSI (Theorem~\ref{thm:high_temperature} and \ref{thm:perturbation}), which are proven in Section~\ref{sec:proofLSI}. In Section~\ref{sec:MkV-smooth} we use them to extend the range of the work  \cite{LLFSharp} of Lacker and Le Flem, obtaining sharp uniform in time PoC for McKean--Vlasov equations in cases of smooth interaction.
Section~\ref{sec:MkV-log-Riesz} addresses the question of uniform-in-time LSI and PoC for singular (log or Riesz) interactions in $\R^d$.

\bigskip

Before stating our main results, we recall first the following result of  Malrieu \cite{MalrieuLSI}, based on the classical Bakry--Émery approach.
\begin{prop}\label{prop:BakryEmery}
Assume that there exist $T > 0$, $L \in \R$ such that
for all $t\in[0,T]$ and $x$, $y\in\R^d$,
\begin{equation}
    \label{eq:curvature}
\bigl(b_t(x)-b_t(y)\bigr) \cdot (x-y) \leqslant L |x-y|^2\,,
\end{equation}
and that $m_0$ satisfies an LSI with constant $C_0>0$.
Then, for all $t\in[0,T]$, the measure $m_t$ satisfies an LSI with constant
\[C_t = e^{2Lt} C_0 + \sigma^2 \int_0^t e^{2Ls}\dd s\,.\]
\end{prop}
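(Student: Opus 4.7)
The plan is to reproduce Malrieu's semigroup-interpolation argument, which is the time-inhomogeneous avatar of the Bakry--Émery method. I would introduce the two-parameter Markov semigroup $(P_{s,t})_{0\leqslant s\leqslant t}$ associated with \eqref{eq:EDS_inhomogeneous}, namely $P_{s,t}f(x) = \Expect[f(X_t)\mid X_s=x]$, which solves the backward Kolmogorov equation $\partial_s P_{s,t}f = -L_s P_{s,t}f$ with generator $L_s f = \sigma^2\Delta f + b_s\cdot\na f$, while $m_s$ solves the forward equation $\partial_s m_s = L_s^* m_s$.

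Given $h\in\mathcal C^1_\textnormal{c}(\R^d)$ with $\int h^2\dd m_t = 1$, I would set $f = h^2$, $u_s = P_{s,t}f$ (so that $\int u_s\dd m_s = 1$ for every $s\in[0,t]$), and consider the entropy interpolation $\Psi(s) = \int u_s\ln u_s\dd m_s$. A direct differentiation using the backward equation for $u_s$, the forward equation for $m_s$, and the diffusion identity $L_s(u\ln u) - (1+\ln u)L_s u = \sigma^2|\na u|^2/u$ yields $\Psi'(s) = \sigma^2\int |\na u_s|^2/u_s\dd m_s$. Integrating from $0$ to $t$ writes the target entropy $\Psi(t) = \int h^2\ln h^2\dd m_t$ as $\Psi(0)$ plus this dissipation integral, both of which must then be dominated by the Fisher information of $h^2$ under $m_t$.

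The heart of the proof is the gradient commutation
$|\na P_{s,t}f(x)| \leqslant e^{L(t-s)}\, P_{s,t}(|\na f|)(x)$,
which I would obtain via synchronous coupling: two solutions of \eqref{eq:EDS_inhomogeneous} driven by the same Brownian motion satisfy, by \eqref{eq:curvature} and Grönwall, $|X_r^x - X_r^y|\leqslant e^{L(r-s)}|x-y|$ pathwise, so that the stochastic flow Jacobian is bounded in operator norm by $e^{L(t-s)}$. Squaring and applying Cauchy--Schwarz in the form $(P_{s,t}|\na f|)^2 \leqslant u_s\cdot P_{s,t}(|\na f|^2/f)$ yields the pointwise Fisher-type bound $|\na u_s|^2/u_s \leqslant e^{2L(t-s)}\,P_{s,t}(|\na f|^2/f)$, which integrates against $m_s$ to give $\int |\na u_s|^2/u_s\dd m_s \leqslant e^{2L(t-s)}\int |\na f|^2/f\dd m_t$.

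To close, the initial LSI applied to $\sqrt{u_0}$ (of unit $m_0$-mass) controls $\Psi(0)$ by $(C_0/4)\int |\na u_0|^2/u_0\dd m_0$, hence by a multiple of $e^{2Lt}\int|\na h|^2\dd m_t$ through the commutation; summing with the dissipation integral and using $|\na f|^2/f = 4|\na h|^2$ produces the stated $C_t$. The only delicate point is the Jacobian bound, which requires enough regularity to differentiate the flow in the initial condition; when $b_t$ is only one-sided Lipschitz, I would mollify $b_t$, run the above scheme on the smoothed problem, and pass to the limit, exploiting the stability of both the LSI constant $C_0$ and the parameter $L$ under this regularization.
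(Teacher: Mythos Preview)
Your proposal is correct and follows essentially the same route as the paper's proof: synchronous coupling to get the gradient commutation $|\nabla P_{s,t}f|\leqslant e^{L(t-s)}P_{s,t}|\nabla f|$, an entropy interpolation along the semigroup, Cauchy--Schwarz in the form $(P_{s,t}|\nabla f|)^2\leqslant P_{s,t}f\cdot P_{s,t}(|\nabla f|^2/f)$, and the LSI for $m_0$ applied to $P_{0,t}f$. The only cosmetic differences are that the paper interpolates pointwise via $u\mapsto P_{0,u}(P_{u,t}f\ln P_{u,t}f)$ and integrates against $m_0$ at the end (citing \cite{Kuwada} for the gradient bound), whereas you interpolate the already-integrated quantity $s\mapsto m_s(P_{s,t}f\ln P_{s,t}f)$ and derive the gradient bound via the stochastic flow Jacobian; after integration these are the same computation.
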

For completeness, the proof is recalled in Section~\ref{sec:proofLSI_BakryEmery}.

\begin{rem}\label{rem:1}
    When the curvature lower-bound $L$ in \eqref{eq:curvature} is negative, this already gives an LSI uniform in $t$, but we are mostly interested in cases where \eqref{eq:curvature} only holds with $L>0$. Nevertheless, this first proposition means that, in the next results (Theorems~\ref{thm:high_temperature} and \ref{thm:perturbation}), in fact, if the assumptions are only satisfied for $t\geqslant t_0$ for some $t_0>0$ large enough (for instance the condition \eqref{eq:conditionTemperature}), we can apply Proposition~\ref{prop:BakryEmery} for times $t\in[0,t_0]$ and then apply the other results to $(m_{t+t_0})_{t\geqslant 0}$.
\end{rem}

The next result addresses the high-diffusivity regime, namely when $\sigma^2$ is high enough (see \eqref{eq:conditionTemperature}). It is proven in Section~\ref{sec:proof_highT}.

\begin{thm}
\label{thm:high_temperature}
Assume that there exist $\rho$, $L$, $R$, $K >0$ such that,
for all $t\geqslant 0$,
\begin{equation}
\label{eq:cond_drift_hightemp}
\bigl(b_{t}(x) - b_{t}(y)\bigr) \cdot (x-y) \leqslant \begin{cases}
- \rho \lvert x-y\rvert^2 &
\forall x,y\in\R^d~\text{with}~\lvert x\rvert\geqslant R\,,\\
L\lvert x-y\lvert^2 & \forall x,y\in\R^d\,,
\end{cases}
\end{equation}
and, setting $R_* = R (2+2L/\rho )^{1/d}$,
\begin{equation}
\label{eq:K}
\sup_{|x|\leqslant R_*} \{ - x\cdot b_{t}(x)\} \leqslant K\,.
\end{equation}
Then, provided $m_0$ satisfies an LSI and
\begin{equation}\label{eq:conditionTemperature}
\sigma^2 \geqslant  \sigma_0^2 \coloneqq
2 (2L+\rho)\frac{(L+\rho/4)R_*^2 + K}{\rho d}\,,
\end{equation}
the family $(m_t)_{t\geqslant 0}$ satisfies a uniform LSI.

Moreover, there exists $C_*>0$ which depends on $L$, $R$, $d$ and $\rho$
but not on $m_0$, $K$ nor $\sigma$ such that,
provided \eqref{eq:conditionTemperature}, there exists $t_*>0$ (depending on all previous parameters, in particular $m_0)$ such that the measure $m_t$ satisfies an LSI
with constant $\sigma^2 C_*$ for all $t\geqslant t_*$.

More precisely, for any $\varepsilon >0$,
there exists $\sigma_0'>0$ which depends only on $L$, $R$, $d$, $\rho$
and $\varepsilon$ such that for all $\sigma \geqslant \sigma_0'$,
there exists $t_*>0$ (depending on all previous parameters, in particular $m_0)$ such that the measure $m_t$ satisfies an LSI
with constant $\sigma^2 (\rho^{-1}+\varepsilon)$ for all $t\geqslant t_*$.
\end{thm}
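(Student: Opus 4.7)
My plan is to combine a reflection-coupling argument in the spirit of Eberle with an entropy--dissipation estimate against a Gaussian reference. The proof would split into constructing a pathwise contraction of the flow and then promoting it to a log-Sobolev inequality at near-optimal temperature.

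\emph{Coupling and contraction.} Consider two solutions $X_t, Y_t$ of \eqref{eq:EDS_inhomogeneous} driven by Brownian motions coupled by reflection across the hyperplane orthogonal to $X_t - Y_t$. With $r_t = |X_t - Y_t|$ and $\hat e_t$ the associated unit vector, Itô's formula gives
\[
\dd r_t = \hat e_t \cdot \bigl(b_t(X_t) - b_t(Y_t)\bigr) \dd t + 2\sqrt{2}\,\sigma\, \dd W_t
\]
for a $1$-dimensional Brownian motion $W$. The drift is bounded by $L r$ in general and by $-\rho r$ once $r \geq 2R$ (since then $|X_t| \geq R$ or $|Y_t| \geq R$). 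Following Eberle, I would construct a concave $\varphi$ with $\varphi(0) = 0$ and $\varphi'(0) = 1$ such that
\[
4 \sigma^2 \varphi''(r) + \psi(r)\, \varphi'(r) \leq -\kappa\, \varphi(r) \quad \forall r \geq 0,
\]
with $\psi(r) = L r$ on $[0, 2R]$ and $\psi(r) = -\rho r$ beyond. The threshold \eqref{eq:conditionTemperature} is precisely what allows such $\varphi$ to exist; optimising the profile gives a rate $\kappa$ tending to $\rho$ as $\sigma \to \infty$, and the semi-metric $\varphi(r)$ contracts as $\Expect[\varphi(r_t)] \leq e^{-\kappa t} \Expect[\varphi(r_0)]$.

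\emph{From coupling to LSI.} Using the Lyapunov bound \eqref{eq:K} together with the confinement at infinity, I would first establish a uniform Gaussian moment $\sup_t \int \exp(\alpha |x|^2) \dd m_t < \infty$. Combined with the contraction, this yields a uniform Talagrand $T_2$-inequality for $m_t$. To upgrade $T_2$ to an LSI with near-sharp constant, I would study the time derivative of $\mathcal H(m_t \,|\, \gamma)$ for a suitable Gaussian reference $\gamma$ along \eqref{eq:FP_inhomogeneous}, absorbing the cross term involving $b_t + \rho x$ via Young's inequality and the LSI of $\gamma$. The bootstrap of Remark~\ref{rem:1} (Proposition~\ref{prop:BakryEmery} on an initial transient, followed by the time-derivative estimate) then propagates a uniform LSI, with the constant driven down to $\sigma^2 (\rho^{-1} + \varepsilon)$ as $\sigma \to \infty$ since $\gamma$ becomes an increasingly accurate approximation of $m_t$.

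\emph{Main obstacle.} The delicate point is this last step: a naive Holley--Stroock perturbation from $\gamma$ would produce an LSI constant carrying a spurious $e^{O(L R_*^2 / \sigma^2)}$ factor, destroying the near-sharp limit. Obtaining the constant $\sigma^2 (\rho^{-1} + \varepsilon)$ seems to demand a careful splitting of the entropy dissipation into a Gaussian contribution (giving the sharp prefactor) and a localised contribution on $B_{R_*}$ absorbed at high temperature; the dimensional factor $(2 + 2L/\rho)^{1/d}$ appearing in $R_*$ hints at a volume-type comparison in this latter step.
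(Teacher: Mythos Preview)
Your proposal diverges substantially from the paper's proof and, more importantly, contains a genuine gap at the ``$T_2$ to LSI'' step.

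\textbf{What the paper actually does.} The paper does \emph{not} use reflection coupling. It uses the \emph{synchronous} coupling argument of \cite{HighTemperature}, which at high diffusivity yields a straight $L^2$ contraction $\Expect|X_t-X_t'|^2 \leqslant M e^{-\lambda(t-s)}\Expect|X_s-X_s'|^2$ (this is where the explicit threshold \eqref{eq:conditionTemperature} and the radius $R_*$ come from, not from an Eberle-type profile $\varphi$). Via Kuwada this gives the $L^2$ gradient estimate $|\nabla P_{s,t}f|^2 \leqslant M e^{-\lambda(t-s)}P_{s,t}|\nabla f|^2$, and an interpolation along $u\mapsto P_{0,u}(P_{u,t}f)^2$ then produces a uniform \emph{Poincar\'e} inequality---nothing stronger. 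Separately, a Gaussian moment on the product space $m_t\otimes m_t$ and a R\"ockner--Wang Harnack inequality for $P_{0,t}$ combine to give the hypercontractivity bound $m_t(f^{3/2})\leqslant C(m_tf)^{3/2}$. Plugging this into the local entropy estimate \eqref{eq:LSIlocalPst} yields a \emph{defective} LSI, and defective LSI $+$ Poincar\'e $=$ LSI by \cite[Proposition~5.1.3]{BakryGentilLedoux}. The near-sharp constant $\sigma^2(\rho^{-1}+\varepsilon)$ comes out because, for large $\sigma$, the defective part is uniformly controlled while the Poincar\'e constant is $\sigma^2 M/\lambda$ with $M\to 1$ and $\lambda$ arbitrarily close to $\rho$.

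\textbf{The gap in your route.} Your plan hinges on ``upgrade $T_2$ to an LSI''. But the implication $T_2\Rightarrow$ LSI is false in general (Otto--Villani goes the other way), and your entropy-dissipation sketch against a Gaussian $\gamma$ does not close: differentiating $\mathcal H(m_t|\gamma)$ produces a cross term $\int (b_t+\rho x)\cdot\nabla\ln(m_t/\gamma)\,m_t$ that is \emph{not} controlled by the Fisher information of $m_t$ relative to $\gamma$ unless $b_t+\rho x$ is small in a suitable sense---and on $B_{R_*}$ it is of order $LR_*$, giving exactly the Holley--Stroock-type pollution you yourself flag, with no mechanism to remove it. The paper sidesteps this entirely: it never compares $m_t$ to a fixed reference, but instead extracts a defective LSI directly from the semigroup via Wang--Harnack, which is the key idea you are missing. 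Your reflection coupling also yields only a $W_1$-type contraction in a distorted metric, which under Kuwada corresponds to a sup-norm gradient estimate rather than the $L^2$ one needed for the Poincar\'e interpolation.
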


The restrictive high-diffusivity condition~\eqref{eq:conditionTemperature} is inherited from the method of~\cite{MonmarcheHighTemperature} that we extend here to the time-inhomogeneous setting. It is unclear whether this condition is necessary for this method, based on parallel coupling, to work (even in the time-homogeneous case). A stronger result than in~\cite{MonmarcheHighTemperature} is proven in~\cite{WangExponentialContraction}, based on reflection coupling and without any constraint on $\sigma^2$, but the method crucially relies on a general spectral result  from \cite{MicloHyperboundedness,WangCriteriaSpectralGap}, making the adaptation to the time-inhomogeneous case  (where the law at time $t$ is not given as the invariant measure of some explicit process) much less clear. Notice that, as detailed in Section~\ref{sec:MkV-smooth}, one of our main motivation is to apply the results    of Lacker and Le Flem  in \cite{LLFSharp} which anyway require a sufficiently large $\sigma$ (which is then necessary for the result to hold: at small temperature, counter-examples are known where propagation of chaos cannot be uniform in time).

The next result is the adaptation in the time-inhomogeneous settings of the bounded-Lipschitz perturbation argument of \cite{lsihe}. Its proof is given in Section~\ref{sec:proof_perturbation}.

\begin{thm}
\label{thm:perturbation}
Assume that, for all $t\geqslant 0$, the drift writes $b_t=a_0 + g_t$
for some $a_0$, $g_t\in\mathcal C^1(\R^d,\R^d)$ with bounded derivatives
such that the generator $\mathcal L_0 = a_0\cdot \na + \sigma^2 \Delta $
admits a unique $\mathcal C^2$
invariant probability density $\mu_0$ satisfying an LSI.
Write $\tilde b_t \coloneqq 2\na \ln \mu_0 - b_t$
and $\varphi_t \coloneqq -\na \cdot g_t + g_t \cdot \na \ln\mu_0 $.
Assume that there exist $L$, $R$, $M^\varphi$, $L^\varphi \geqslant 0$
and $\rho > 0$ such that,
\begin{itemize}
\item for all $t\geqslant 0$,
we have $\varphi_t = \varphi_{1,t} + \varphi_{2,t}$
with \(M^\varphi\)-bounded $\varphi_{1,t}$
and $L^\varphi$-Lipschitz $\varphi_{2,t}$;
\item for all $x$, $y\in\R^d$,
\begin{equation}\label{eq:condition_contraction}
\bigl(\tilde b_t(x)-\tilde b_t(y)\bigr) \cdot (x-y) \leqslant \begin{cases}
- \rho |x-y|^2 & \text{if $|x-y|\geqslant R$},\\
L|x-y|^2 & \text{otherwise}.
\end{cases}
\end{equation}
\end{itemize}
Finally, assume that the logarithmic relative density $u_0 = \ln \dd m_0 /\! \dd \mu_0$ exists and is the sum of a bounded and a Lipschitz continuous functions. Then $(m_t)_{t\geqslant 0}$ satisfies a uniform LSI.

Moreover there exists $C_*>0$ which depends on
$L$, $R$, $M^\varphi$, $L^\varphi$, $\sigma^2$, $\rho$
and the LSI constant of $\mu_0$
but not on $m_0$ such that, for some $t_*>0$, $m_t$ satisfies an LSI
with constant $C_*$ for all $t\geqslant t_*$.

Finally, denoting by $C_0$ the LSI constant of $\mu_0$, the following holds.
For any $\varepsilon>0$, there exists $\eta>0$
(which depends only on $\rho$, $L$, $R$ and $\varepsilon$) such that,
if $M^\varphi+L^\varphi \leqslant \eta$,
there exists $t_*>0$ (depending on all previous parameters, in particular $m_0)$ such that $m_t$ satisfies an LSI
with constant $C_0+\varepsilon$ for all $t\geqslant t_*$.
\end{thm}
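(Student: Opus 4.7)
The strategy is to adapt to the time-inhomogeneous setting the bounded--Lipschitz perturbation argument of \cite{LSINESS}. I introduce the density $h_t \coloneqq \dd m_t /\! \dd \mu_0$ and aim to show that $u_t \coloneqq \ln h_t$ decomposes, uniformly in $t$ (and eventually independently of $m_0$), as the sum of a bounded and a Lipschitz function. The LSI for $m_t = e^{u_t}\mu_0$ then follows from the LSI of $\mu_0$ by combining the Holley--Stroock perturbation for the bounded part with the Aida--Shigekawa perturbation for the Lipschitz part.

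\emph{Step 1: evolution of $h_t$.} A direct computation using $\mathcal L_0^* \mu_0 = 0$ and $b_t = a_0 + g_t$ shows that $h_t$ satisfies a linear parabolic equation of the form
\[
\partial_t h_t = \sigma^2 \Delta h_t + \tilde b_t \cdot \na h_t - \Psi_t\, h_t,
\]
where the first-order drift is precisely the time-reversed drift $\tilde b_t$ and $\Psi_t$ is an explicit first-order functional of $g_t$ related to $\varphi_t$ modulo obvious sign conventions; in particular it inherits the decomposition $\Psi_t = \Psi_{1,t} + \Psi_{2,t}$ with $\Psi_{1,t}$ bounded and $\Psi_{2,t}$ Lipschitz. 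The Feynman--Kac formula then represents $h_t$ as
\[
h_t(x) = \Expect\biggl[h_0(\tilde Y_t^{t,x}) \exp\biggl(-\!\int_0^t \Psi_{t-s}(\tilde Y_s^{t,x}) \, \dd s \biggr)\biggr],
\]
where $(\tilde Y_s^{t,x})_{s \in [0,t]}$ is the diffusion with generator $\sigma^2\Delta + \tilde b_{t-s}\cdot\na$ started at $x$.

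\emph{Step 2: coupling and bounded + Lipschitz decomposition.} Hypothesis \eqref{eq:condition_contraction} on $\tilde b_t$ is precisely of Eberle type: a reflection coupling of two copies of $\tilde Y$ in the distorted distance $f(\lvert x-y\rvert)$ for a suitable concave $f$ yields $W_1$-contraction at some rate $\lambda > 0$ depending only on $L$, $R$, $\rho$, $\sigma^2$. Writing $u_t = u_t^{\mathrm{b}} + u_t^{\mathrm{Lip}}$ and estimating the difference $u_t(x)-u_t(y)$ from the Feynman--Kac formula, the contribution of $\Psi_{2,t}$ is controlled by $L^\varphi\, W_1\bigl(\Law(\tilde Y_s^{t,x}),\Law(\tilde Y_s^{t,y})\bigr)$ integrated in $s$, which through the exponential contraction of $\tilde Y$ gives $\|\na u_t^{\mathrm{Lip}}\|_\infty \leqslant c_1 L^\varphi/\lambda + e^{-\lambda t}\|\na u_0\|_\infty$. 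The contribution of $\Psi_{1,t}$ only affects normalization and is bounded pathwise, producing $\osc(u_t^{\mathrm b}) \leqslant c_2 M^\varphi/\lambda + e^{-\lambda t}\osc(u_0^{\mathrm b})$.

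\emph{Step 3: conclusion and refined statements.} The Holley--Stroock and Aida--Shigekawa perturbation principles applied to the LSI of $\mu_0$ yield an LSI for $m_t = e^{u_t}\mu_0$ with a constant explicitly controlled by $C_0$, $\osc(u_t^{\mathrm b})$ and $\|\na u_t^{\mathrm{Lip}}\|_\infty$; combined with the uniform bounds from Step~2 this gives the uniform LSI. The exponential decay of the $u_0$-dependent terms produces the $m_0$-independent constant $C_*$ for $t \geqslant t_*$. For the final statement, as $M^\varphi + L^\varphi \to 0$ both stationary contributions vanish, so the perturbation LSI constant converges to $C_0$, from which the threshold $\eta$ is extracted by continuity.

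The main obstacle is Step~2: producing a \emph{time-uniform} bounded + Lipschitz decomposition from the Feynman--Kac formula. The exponential weight grows linearly in $t$ if one only uses boundedness of $\Psi_{1,t}$, so the right additive normalization must be subtracted pathwise---this is the pathwise analogue of Holley--Stroock and is already the delicate step in the stationary argument of \cite{LSINESS}. Once this is handled, transferring the Lipschitz seminorm through the logarithm via the $W_1$-contraction of $\tilde Y$ is standard, and the time-inhomogeneous adaptation essentially amounts to bookkeeping, as the coupling rate $\lambda$ and the bounds on $\Psi_t$ are uniform in $t$ by assumption.
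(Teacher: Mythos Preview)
Your overall architecture---derive the evolution of $h_t=m_t/\mu_0$, show that $u_t=\ln h_t$ is uniformly a bounded plus Lipschitz function, then apply Holley--Stroock and Aida--Shigekawa---matches the paper. The gap is in Step~2, and it is the heart of the matter.

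From the Feynman--Kac formula for $h_t$ you cannot read off a time-uniform estimate on $u_t(x)-u_t(y)=\ln h_t(x)-\ln h_t(y)$ by coupling the auxiliary diffusion $\tilde Y$. The logarithm of a ratio of expectations is not an expectation of a difference, so a $W_1$-contraction for $\tilde Y$ (which is all reflection coupling under \eqref{eq:condition_contraction} provides---an \emph{expectation} bound on $|\tilde Y^{t,x}_s-\tilde Y^{t,y}_s|$, not an almost-sure one) does not transfer through the $\ln$. Your sentence ``the contribution of $\Psi_{2,t}$ is controlled by $L^\varphi\,W_1(\ldots)$'' is precisely the step that fails: that bound holds for $h_t(x)-h_t(y)$, not for $\ln h_t(x)-\ln h_t(y)$, and converting one to the other would require a time-uniform positive lower bound on $h_t$, which you do not have. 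The ``pathwise normalization'' you invoke is not how \cite{LSINESS} handles this either.

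What the paper (following \cite{LSINESS}) does instead is pass to the HJB equation for $u_t$ itself,
\[
\partial_t u_t = \sigma^2\Delta u_t + |\nabla u_t|^2 + \tilde b_t\cdot\nabla u_t + \varphi_t\,,
\]
and linearize the quadratic term via $|\nabla u|^2=\sup_\alpha\{2\alpha\cdot\nabla u-|\alpha|^2\}$ to obtain a stochastic control representation $u_t(x)=\sup_\alpha \Expect[\,\cdots\,]$. The point is that now $u_t(x)-u_t(y)$ is bounded by choosing a near-optimal control for $x$ and using the \emph{same} control for $y$: the difference becomes a single expectation, linear in the trajectories, and reflection coupling of the controlled diffusions (Lemma~8 of \cite{LSINESS}) yields the uniform bounded-plus-Lipschitz estimate with the correct exponential decay of the $u_0$-dependent contribution. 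This is the mechanism that replaces your Step~2; once it is in place, your Step~3 goes through exactly as you wrote.
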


It may appear that the conditions of Theorem~\ref{thm:perturbation}
are unnatural, in particular for its consideration
of the quantities $\tilde b_t$, $\varphi_t$.
Here we only mention that the proof of the theorem
is based on the Hamilton--Jacobi--Bellman (HJB) equation \eqref{eq:elliptic-hjb}
satisfied by the log-density $\ln \dd m_t/\!\dd \mu_0$ of the measure flow,
where $\tilde b_t$, $\varphi_t$ appear by computations.
This analytical approach,
despite being not very compatible with the theory of Markov diffusion processes,
turns out to be robust enough for our treatment of singular interactions
in Section~\ref{sec:MkV-log-Riesz}.

\section{Proofs of the general results}\label{sec:proofLSI}

In this section we write $(P_{s,t})_{t\geqslant s\geqslant 0}$ the inhomogeneous Markov semi-group associated to \eqref{eq:EDS_inhomogeneous}, given by
\[P_{s,t} f(x) = \mathbb E [ f(X_t) | X_s = x] \,.\]
In particular, the solution of \eqref{eq:FP_inhomogeneous} is then given by $m_t = m_0 P_{0,t}$. In the proofs of Proposition~\ref{prop:BakryEmery} and Theorem~\ref{thm:high_temperature},  we can additionally assume that $b_t$ is smooth with all derivatives being bounded, and consider functions $f$ which are for instance the sum of a positive constant and a compactly supported  smooth non-negative function, which enable to justify the computations based on $\partial_t P_{s,t} f = P_{s,t} \mathcal L_t f $ and  $\partial_s P_{s,t} f = - \mathcal L_s P_{s,t}  f $ (using e.g.\ Proposition~\ref{prop:feynman-kac}). The conclusion is then obtained  by approximation (as in e.g.\ \cite{ulpoc}).

\subsection{Proof of Proposition~\ref{prop:BakryEmery}}\label{sec:proofLSI_BakryEmery}

\begin{proof}[Proof of Proposition~\ref{prop:BakryEmery}]
    Considering $X$ and $X'$ two solutions of \eqref{eq:EDS_inhomogeneous} driven by the same Brownian motion, the condition \eqref{eq:curvature} gives
    \begin{equation}
        \label{eq:ascontraction}
    \dd |X_t - X_t'|^2 \leqslant 2L |X_t-X_t'|^2 \dd t\,,
    \end{equation}
       so that $|X_t-X_t'|^2 \leqslant e^{2L(t-s)}|X_s-X_s'|^2$ for all $t\geqslant s\geqslant 0$, which by \cite{KuwadaDuality} implies
   \begin{equation}
       \label{eq:GradBE}
|\na P_{s,t} f| \leqslant  e^{L (t-s)} P_{s,t}|\na f|\,.
\end{equation}
(In the initial work~\cite{MalrieuLSI}, the sub-commutation~\eqref{eq:GradBE} is obtained by invoking \cite{BakrySobolev}, where it is proven directly through Bakry--Émery interpolations, requiring regularity or integrability assumptions, or approximation arguments to justify the time-differentation along the semi-group. By using a coupling  and relying on the more recent~\cite{KuwadaDuality}, this issue is bypassed. Notice that the fact that a coupling bound implies a sub-commutation is the easiest implication in~\cite{KuwadaDuality}; the converse implication is more involved.)
Fix a function $f\in\mathcal C^\infty(\R^d,\R_+)$, globally Lipschitz continuous and lower bounded by a positive constant (it is sufficient to prove the LSI with these functions and conclude by approximation).
For  $t>s\geqslant 0$, we consider
the interpolation $\Psi(u) = P_{s,u}( P_{u,t} f \ln P_{u,t} f)$ for $u\in[s,t]$,
so that
\begin{align*}
P_{s,t} (f \ln f) - P_{s,t} f \ln P_{s,t} f & = \Psi(t) - \Psi(s)\\
& = \int_s^t \Psi'(u) \dd u \\
&=\sigma^2 \int_{s}^t P_{s,u} \frac{|\na P_{u,t} f|^2}{P_{u,t}f}\dd u  \\
& \leqslant \sigma^2 \int_s^{t} e^{2L(t-u)}\dd u P_{s,t}\biggl(\frac{|\na f|^2}{f}\biggr)\,,
\end{align*}
where we used that $(P_{u,t} |\na f|)^2 \leqslant P_{u,t} (|\na f|^2\!/f) P_{u,t}(f)$ by Cauchy--Schwarz. Integrating with respect to $m_s$ gives
\begin{equation}
    \label{eq:LSIlocalPst}
    m_t(f\ln f)   \leqslant m_s (P_{s,t} f \ln P_{s,t} f)
+ m_t\biggl(\frac{|\na f|^2}{f}\biggr)\sigma^2 \int_s^{t} e^{2L(t-u)}\dd u \,.
\end{equation}
The proof is concluded by applying this with $s=0$ and using the LSI for $m_0$, \eqref{eq:GradBE} and Cauchy--Schwarz to bound
\begin{align*}
m_0( P_{0,t} f \ln P_{0,t} f)
&\leqslant m_t f \ln(m_t f)
+ C_0 m_0\biggl(\frac{|\na P_{0,t} f|^2}{P_{0,t} f}\biggr) \\
&\leqslant m_t f m_t(\ln f)
+ C_0 e^{2Lt} m_t\biggl(\frac{|\na f|^2}{ f}\biggr)\,. \qedhere
\end{align*}
\end{proof}

\subsection{Proof of Theorem~\ref{thm:high_temperature}}\label{sec:proof_highT}
\begin{proof}[Proof of Theorem~\ref{thm:high_temperature}]
    The different steps of the proof are the following. First, using the coupling argument of \cite{MonmarcheHighTemperature} (at high diffusivity), we get a long-time $L^2$ contraction along the synchronous coupling of two solutions of \eqref{eq:EDS_inhomogeneous}. By contrast to the almost sure contraction \eqref{eq:ascontraction}, this $L^2$ contraction is not enough to get an LSI, but it gives a uniform Poincaré inequality following arguments similar to the proof of Proposition~\ref{prop:BakryEmery}. It remains then to prove a so-called defective LSI, which together with the Poincaré inequality yields the desired LSI. The proof of the defective LSI follows the arguments of \cite{lsihe}, except that in the present case the measure for which the LSI is proven is not an invariant measure of a time-homogeneous semi-group (which would solve $\mu = \mu P_t$, which in our case is replaced by $m_t=m_0P_{0,t}$). These arguments combine a Wang--Harnak inequality for the operator $P_{0,t}$ with a Gaussian moment bound.

\proofstep{Step 1: Poincaré inequality}
Let $X$, $X'$ be two solutions of \eqref{eq:EDS_inhomogeneous} driven by the same Brownian motion. Following the proof of \cite[Theorem 1]{MonmarcheHighTemperature} (which is concerned with time-homogeneous processes, but the proof works readily in the non-homogeneous case under the time-uniform assumptions made in Theorem~\ref{thm:high_temperature}), we get for all $t\geqslant s \geqslant 0$,
\begin{equation*}
\mathbb E \bigl[|X_t - X_t'|^2\bigr]
\leqslant M e^{-\lambda (t-s)} \mathbb E \bigl[|X_s - X'_s|^2\bigr]\,,
\end{equation*}
 where
 \begin{equation}
     \label{eq:lambdaM}
\lambda  = \frac \rho 2 \,, \qquad M=    1+ \frac{2(2L+\rho)R_*^2}{4d\sigma^2 } \,.
 \end{equation}
This implies, by \cite{KuwadaDuality},
\[|\na P_{s,t} f|^2 \leqslant M e^{-\lambda (t-s)} P_{s,t}|\na f|^2\,.\]
Since $m_0$ satisfies a LSI, it satisfies a Poincaré inequality, and thus,
\[     m_0(P_{0,t} f)^2   - (m_0P_{0,t}f)^2
\leqslant C_0 m_0 |\na P_{0,t} f|^2 \leqslant C_0 M e^{-\lambda t} m_t|\na f|^2\,. \]
Besides, for fixed $t\geqslant 0$ and  $f\in\mathcal C^1(\R^d,\R)$ globally Lipschitz continuous, considering the interpolation $\Psi(u) = P_{0,u}(P_{u,t}f)^2 $ for $u\in[0,t]$, we get
\begin{align*}
    P_{0,t} (f^2) - (P_{0,t} f)^2 & = \Psi(t) - \Psi(0) \\
    & = \int_0^t \Psi'(u) \dd u\\
    & = \sigma^2  \int_0^t P_{0,u}|\na P_{u,t} f|^2 \dd u  \\
    & \leqslant \sigma^2  \int_0^t M e^{-\lambda(t-u)} \dd u P_{0,t}|\na f|^2\,.
\end{align*}
Combining these last two inequalities, we get
\begin{align}
m_t(f^2) - \bigl(m_t(f)\bigr)^2 &=
m_0\bigl(P_{0,t} (f^2) - (P_{0,t} f)^2\bigr)
+m_0(P_{0,t} f)^2   - (m_0P_{0,t}f)^2 \nonumber  \\
& \leqslant M \biggl(\frac{\sigma^2}{\lambda} + e^{-\lambda t} C_0\biggr)
m_t|\na f|^2\,,\label{eq:Poincare_unif}
\end{align}
which is a uniform Poincaré inequality for $(m_t)_{t\geqslant 0}$.

\proofstep{Step 2: Gaussian moment}
Since $m_0$ satisfies an LSI, there exists $\delta_0>0$ such that
\[\int_{\R^d\times \R^d} e^{\delta_0 |x-y|^2 } m_0(\dd x)m_0(\dd y) < \infty\,. \]
Write $V(x,y) = e^{\delta|x-y|^2}$ for some $0 < \delta < \min(\delta_0,\rho/5)$ and $\mathcal L_{2,t}$ the generator on $\R^d \times \R^d$ of two independent diffusion processes \eqref{eq:EDS_inhomogeneous}, namely
\[\mathcal L_{2,t} g(x,y) = b_t(x) \cdot \na_x + b_t(y)\cdot\na_y + \sigma^2 \Delta_x + \sigma^2 \Delta_y\,.\]
Using \eqref{eq:cond_drift_hightemp} (and that $|x-y| \geqslant 2R$ implies that either $|x|\geqslant R$ or $|y|\geqslant R$),
\begin{align*}
\frac{\mathcal L_{2,t} V(x,y)}{V(x,y)} & = 2\delta (x-y)\cdot\bigl(b_t(x)-b_t(y)\bigr)
+ 4 \delta d + 8 \delta^2 |x-y|^2 \\
& \leqslant \begin{cases}
4 \delta d + (8 \delta^2- 2\delta \rho) |x-y|^2 & \text{if }|x-y|\geqslant 2R\\
4 \delta d + 4(8 \delta^2 + 2\delta L) R^2     & \text{otherwise}
\end{cases}\\
& \leqslant  -\delta \1_{|x-y|\geqslant R_*} + C_*\1_{|x-y|<R_*}
\end{align*}
with
\[R_*^2 = \max\biggl(\frac{1 + 4   d}{2(\rho-4\delta) },4R^2\biggr)\,,\qquad C_* =\delta\bigl(4  d + (2 L+ 8 \delta)R^2\bigr)\,. \]
Hence,
\[
\mathcal L_{2,t}V(x,y) 
\leqslant - \delta  V(x,y) + C_*  e^{\delta R_*^2}\,,\]
and thus,
\[ \partial_t (m_t\otimes m_t) (V)  \leqslant - \delta (m_t\otimes m_t) (V) + C_*  e^{\delta R_*^2}\,. \]
As a conclusion, for all $t\geqslant 0$,
\begin{equation}
    \label{eq:Gaussian_bound}
\int_{\R^d\times\R^d} e^{\delta |x-y|^2} m_t(\dd x)m_t(\dd y)  \leqslant \delta^{-1} C_*  e^{\delta R_*^2} + e^{-\delta t }\int_{\R^d\times\R^d} e^{\delta |x-y|^2} m_0(\dd x)m_0(\dd y) \,.
\end{equation}

\proofstep{Step 3: Wang--Harnack inequality} In the following, fix $f>0$ such that $m_t f=1$.
Using the Röckner--Wang argument~\cite{RoecknerWangLogHarnack} for the diffusion \eqref{eq:EDS_inhomogeneous} we get, for all $x$, $y\in\R^d$, $\alpha>1$ and $t>0$,
\begin{eqnarray}
\label{eq:RocknerWang}
\bigl(P_{0,t} f(y)\bigr)^\alpha  \leqslant (P_{0,t} f^\alpha) (x) \exp \Biggl(\frac{\alpha }{2\sigma^2(\alpha-1)}\biggl(L^2t + \frac{|x-y|^2}{t}\biggr)\Biggr)\,,
\end{eqnarray}
so that
\begin{multline*}
\int_{\R^d} (P_{0,t} f^\alpha) (x) m_0(\dd x) \\ \geqslant
\bigl(P_{0,t} f(y)\bigr)^\alpha \int_{\R^d} \exp \Biggl(- \frac{\alpha }{2\sigma^2(\alpha-1)}\biggl(L^2t + \frac{|x-y|^2}{t}\biggr)\Biggr)m_0(\dd x)\,,
\end{multline*}
and thus, for any $\beta>\alpha$,
\begin{align}
\MoveEqLeft m_0 (P_{0,t} f)^\beta \nonumber \\&\leqslant
\bigl(m_t(f^\alpha)\bigr)^{\beta/\alpha} \int_{\R^d} \biggl[\int_{\R^d} \exp \biggl(- \frac{\alpha \bigl(L^2t + \frac{|x-y|^2}{t}\bigr)}{2\sigma^2(\alpha-1)}\biggr) m_0(\dd x)\biggl]^{-\beta/\alpha}m_0(\dd y) \nonumber \\
& \leqslant \bigl(m_t(f^\alpha)\bigr)^{\beta/\alpha} \int_{\R^{2d}} \exp \biggl(\frac{\beta \bigl(L^2t + \frac{|x-y|^2}{t}\bigr) }{2\sigma^2(\alpha-1)}\biggr)m_0(\dd x)m_0(\dd y)\,. \label{eq:RWbis}
\end{align}
Using Jensen's inequality for the probability with density $P_{0,t} f$ with respect to $m_0$, taking $\alpha=3/2$ so that $x\mapsto x^{\alpha-1}$ is concave, we get
\[ m_t(f^\alpha) = m_0   P_{0,t} f^\alpha
\leqslant \bigl(m_0 (P_{0,t} f)^2\bigr)^{\alpha-1} \,. \]
Using  \eqref{eq:RWbis} with $\beta=2$ to bound the right hand side then gives
\[    m_t (f^\alpha)   \leqslant \bigl(m_t(f^\alpha)\bigr)^{2/3} \Biggl[\int_{\R^{2d}} \exp \Biggl(\frac{4  }{2\sigma^2}\biggl(L^2t + \frac{|x-y|^2}{t}\biggr)\Biggr)m_0(\dd x)m_0(\dd y)\Biggr]^{1/2}\,.\]
and we can divide by $(m_t f^\alpha)^{2/3}$ to end up with
\[    m_t (f^\alpha)   \leqslant\Biggl[ \int_{\R^{2d}} \exp\Biggl(\frac{2 }{\sigma^2}\biggl(L^2t + \frac{|x-y|^2}{t}\biggr)\Biggr)m_0(\dd x)m_0(\dd y)\Biggr]^{3/2} \,.\]
Applying this result to $(m_{t+t_0})_{t\geqslant 0}$ for some $t_0>0$ we get that for all $t\geqslant 0$ and all $f>0$ with $m_{t+t_0} f=1$,
\[m_{t+t_0}\bigl(f^{3/2}\bigr) \leqslant \Biggl[\int_{\R^{2d}} \exp \Biggl(\frac{2 }{\sigma^2}\biggl(L^2t_0 + \frac{|x-y|^2}{t_0}\biggr)\Biggr)m_t(\dd x)m_t(\dd y)\Biggr]^{3/2}\,. \]
Taking $t_0  = 2/(\delta\sigma^2) $, the right hand side is bounded uniformly in $t\geqslant 0$ thanks to \eqref{eq:Gaussian_bound}. As a conclusion, we have determined $t_0$, $C>0$ such that
\begin{equation}
\label{eq:hypercontract}
\forall t \geqslant t_0,\ \forall f>0,\qquad m_t\bigl(f^{3/2}\bigr) \leqslant C (m_t f)^{3/2}\,.
\end{equation}
Moreover, in view of \eqref{eq:Gaussian_bound}, we can find $C>0$ which depends on  $m_0$ only through $\delta$ such that \eqref{eq:hypercontract} holds with this $C$ for all $t$ large enough. To see that we can take $\delta$ independent from $m_0$, we can replace the function $V$ above by the time-dependent $V_t(x,y) = e^{\delta_t|x-y|^2}$ where $t\mapsto \delta_t$ is slowly and smoothly increasing starting from some small $\delta_0>0$ (depending on $m_0$) and reaching $\rho/5$ after some time. Following similar computations as above we get that $(m_t \otimes m_t)(V_t)$ is non-increasing (taking $\dd \delta_t/\dd t$ sufficiently small), from which, replacing $(m_t)_{t\geqslant 0}$ by $(m_{t_0+t})_{t\geqslant 0}$ for some sufficiently large $t_0$, we can assume that \eqref{eq:Gaussian_bound} holds for $\delta=\rho/5$. As a conclusion, for times large enough, \eqref{eq:hypercontract} holds with a constant $C$ independent from $m_0$.

\proofstep{Step 4: Conclusion}
For $t\geqslant t_0$, applying \eqref{eq:LSIlocalPst} with $s=t-t_0$ gives, for $f>0$,
\begin{equation*}
m_t(f\ln f) \leqslant 2 m_s \bigl(P_{s,t} f \ln (P_{s,t} f)^{1/2}\bigr)
+ m_t\biggl(\frac{|\na f|^2}{f}\biggr)\sigma^2 \int_0^{t_0} e^{2Lu}\dd u \,.
\end{equation*}
Assume that $f$ is such that  $m_t f= 1$. Applying Jensen's inequality twice (first with the probability measure with density $P_{s,t} f$ with respect to $m_s$) gives
\[  m_s \bigl(P_{s,t} f \ln (P_{s,t} f)^{1/2}\bigr)
\leqslant \ln \bigl(m_s (P_{s,t} f)^{3/2} \bigr)\leqslant \ln m_t\bigl(f^{3/2}\bigr)\,. \]
Thanks to \eqref{eq:hypercontract}, we have thus obtained that for all $t\geqslant t_0$ and all $f>0$ with $m_t f =1$,
\begin{equation}
    \label{eq:defLSI}
     m_t(f\ln f)   \leqslant 2 \ln C + m_t\biggl(\frac{|\na f|^2}{f}\biggr)\sigma^2 \int_0^{t_0} e^{2Lu}\dd u \,,
\end{equation}
which is called a defective LSI (and is uniform over $t\geqslant t_0$). According to \cite[Proposition 5.1.3]{BGLMarkov}, combining this inequality with the (time uniform) Poincaré inequality \eqref{eq:Poincare_unif} gives an LSI for $m_t$ uniformly over  $t\geqslant t_0$. For $t\in[0,t_0]$ we apply Proposition~\ref{prop:BakryEmery}, which concludes the proof of the uniform LSI.

Finally, as mentioned above, the constant $C$ may be taken independent from  $m_0$, in which case the defective LSI \eqref{eq:defLSI} holds for sufficiently large times. Similarly, we see that the Poincaré inequality \eqref{eq:Poincare_unif} holds with constant  $M \sigma^2\! / \lambda +1$ (which is independent from $m_0$) for $t$ large enough. This shows that there exists $C_*'>0$ independent from $m_0$ such that $m_t$ satisfies an LSI with constant $C_*'$ for $t$ large enough. The fact that  $C_*' \leqslant \sigma^2 C_*$ for some $C_*>0$ independent from $\sigma$ can be checked in the explicit expressions above. More precisely, taking $\delta=\rho/5$ and $t_0 = 2 /(\delta\sigma^2)$, we get that, in \eqref{eq:defLSI} the constant $C$ is uniformly bounded over $\sigma \geqslant \sigma_0$ by a constant that depends only on $\rho$, $L$, $R$, $d$, and similarly we can   bound
\[ \sigma^2 \int_0^{t_0} e^{2Lu}\dd u \leqslant \sigma^2 t_0 e^{2Lt_0} \leqslant \frac{10}{\rho} e^{20L/\rho}   \]
in \eqref{eq:defLSI} uniformly over $\sigma\geqslant 1$. As a consequence, for large values of $\sigma^2$, the leading term in the LSI constant for large times is $\sigma^2 M/\lambda$ from the Poincaré constant, with $M$ and $\lambda$ in \eqref{eq:lambdaM}. As $\sigma \rightarrow \infty$, $M$ goes to $1$, so we may take the LSI constant (for large times) to be $\sigma^2 (\lambda^{-1}+\varepsilon)$ for any arbitrary $\varepsilon>0$ for $\sigma$ large enough. This estimate (with $\lambda=\rho/2$) is not sharp, as we expect an LSI of order $\sigma^2\!/\rho$ (which is the Gaussian behavior). This is due to the $1/2$ factor in the definition of $\lambda$ in \cite{MonmarcheHighTemperature}, which is in fact arbitrary, in the sense that the computations of \cite{MonmarcheHighTemperature} work if we take $\lambda = \alpha \rho$ for an arbitrary $\alpha<1$ (see the two first equations of \cite[Section 2.1.2]{MonmarcheHighTemperature}), provided the lower bound on the temperature $\sigma_0^2$ is sufficiently large (depending on $\alpha$). As a conclusion, we can get a Poincaré constant, and thus an LSI constant, equal to $\sigma^2 (\rho^{-1}+\varepsilon)$ for an arbitrary $\varepsilon$ for large times, provided $\sigma$ is large enough.
\end{proof}

\subsection{Proof of Theorem~\ref{thm:perturbation}}\label{sec:proof_perturbation}

\begin{proof}[Proof of Theorem~\ref{thm:perturbation}]

The proof closely follows the one of \cite[Theorem 1]{lsihe} (in the time-homogeneous settings and with $m_0=\mu_0$, i.e.\ $u_0=0$), the time dependencies appearing along the proof being dealt with the uniform-in-time assumptions of Theorem~\ref{thm:perturbation}. We recall the main arguments and refer to \cite{lsihe} for details. Starting from
\begin{align*}
\partial_t m_t &= -\na\cdot (b_t m_t) + \Delta m_t\,, \\
0 = \partial_t \mu_0 &= -\na\cdot (a_0 \mu_0) + \Delta \mu_0\,,
\end{align*}
    we get that $h_t=m_t/\mu_0$ is a viscosity solution to
    \begin{equation}
\label{eq:elliptic-h}
\partial_t h_t = \Delta h_t + \tilde b_t \cdot \nabla h_t + \varphi_t h_t\,.
\end{equation}
This gives the Feynman--Kac representation
\[
h_t(x) = \Expect \biggl[h_0\bigl(X_t^{t,x}\bigr) \exp\biggl( \int_0^t \varphi_s\bigl(X^{t,x}_s\bigr) \dd s\biggr)\biggr]\,,
\]
where $X^{t,x}$ solves
\[
X^{t,x}_0 = x\,,\qquad
\dd X^{t,x}_s = \tilde b_{t-s} \bigl(X^{t,x}_s\bigr) \dd s + \sqrt 2\dd B_t
\quad\text{for $s \in [0,t]$}.
\]
Suppose additionally that $\varphi_t$,  $h_0$ and $1/h_0$ are bounded and Lipschitz continuous (the general case being obtained afterwards by an approximation argument, which we omit here, referring to \cite{lsihe}).
Then, applying synchronous coupling to the Feynman--Kac formula above, for any $T>0$  we obtain a constant $M > 0$ such that for every $t$, $s \in [0,T]$ and every $x$, $y \in \mathbb R^d$,
\[
M^{-1} \leqslant h(t,x) \leqslant M\qquad\text{and}\qquad
|h(t,x) - h(s,y)| \leqslant M \bigl(|t - s|^{1/2} + |x - y|\bigr)\,.
\]
Taking the logarithm
we obtain that $u_t \coloneqq \ln h_t$ is a bounded and uniformly continuous viscosity solution
to the HJB equation,
\begin{equation} \label{eq:elliptic-hjb}
\partial_t u_t = \Delta u_t + |\nabla u_t|^2 + \tilde b_t\cdot\nabla u_t
+ \varphi_t\,.
\end{equation}
In order to use a stochastic control representation of the solutions of such equations, for $N \in \N$, consider the approximative HJB equation,
\begin{equation} \label{eq:elliptic-hjb-N}
u_0^N = u_0\,,\qquad  \partial_t u^N_t = \Delta u^N_t + \sup_{\alpha:\lvert\alpha\rvert\leqslant N}
\{ 2\alpha\cdot\nabla u^N_t - |\alpha|^2\}
+ \tilde b\cdot \nabla u^N + \varphi_t\,,
\end{equation}
and the associated control problem,
\begin{equation} \label{eq:elliptic-oc-N}
V^N(T, x) = \sup_{\vphantom{\alpha:|\alpha_t| \leqslant N}\nu}
\sup_{\alpha:|\alpha_t| \leqslant N} \mathbb E \biggl[u_0\bigl(X_T^{\alpha,x}\bigr)+ \int_0^T
\Bigl(\varphi_t\bigl(X_t^{\alpha,x}\bigr) - |\alpha_t|^2\Bigr)\dd t \biggr]\,,
\end{equation}
 where $\nu = \bigl(\Omega, F, (\mathcal F_\cdot), \mathbb P, (B_\cdot)\bigr)$
stands for a filter probability space with the usual conditions
and an \((\mathcal F_\cdot)\)-Brownian motion $B$,
$\alpha$ is an $\mathbb R^d$-valued progressively measurable process such that
$\int_0^T \Expect \bigl[|\alpha_t|^m\bigr] \dd t$
is finite for every \(m \in \N\), and $X^{\alpha,x}$ solves
\begin{equation}
    \label{eq:SDEcontrolled}
    X_0^{\alpha,x} = x\,,\qquad
\dd X_t^{\alpha,x} = \Bigl(\tilde b\bigl(X_t^{\alpha,x}\bigr) + 2 \alpha_t\Bigr) \dd t
+ \sqrt{2} \dd B_t \,.
\end{equation}
By Theorem IV.7.1 and the results in Sections V.3 and V.9
of \cite{FlemingSoner}, the value function $V^N$ defined
by \eqref{eq:elliptic-oc-N} is a bounded and uniformly continuous viscosity solution
to \eqref{eq:elliptic-hjb-N}.

Suppose $u_0 = \ln (m_0 / \mu_0)$ is the sum of an
$M^{u_0}$-bounded and an $L^{u_0}$-Lipschitz function.
As shown in \cite[Lemma 8]{lsihe}, using a reflection coupling of two solutions of \eqref{eq:SDEcontrolled} with different initial conditions but using the same control $\alpha$, we get that there exist $C'$, $\kappa  > 0$,
depending only on $\rho$, $L$, $R$,
such that for every $x$, $y \in \R^d$, $N\in\N$, $T > 0$ and   $t >0$,
we have
\begin{multline}
  |V^N(T,x) - V^N(T,y)| \leqslant  2 M^\varphi t + 2 M^{u_0}\1_{T<1}
\\
+ C'\biggl(  \1_{t<T} \frac{M^\varphi}{t}   + L^\varphi + e^{-\kappa T}\bigl(L^{u_0} + \1_{T\geqslant 1} M^{u_0}\bigr)\biggr) |x - y|\,.  \label{eq:VNT}
\end{multline}
We simply take $t=1$. Since both $u$ and $V^N$ are bounded and uniformly continuous on $[0,T]\times\R^d$,
we can apply the parabolic comparison for viscosity solutions on the whole space
\cite[Theorem 1]{DFOParabolicComparison}
to obtain $V^N(T,x) = u_T(x)$ for $N$ sufficiently large.
Therefore, we have obtained that there exists $C>0$ such that for every $T > 0$ and every $x$, $y \in\R^d$,
we have
\begin{equation}
\label{eq:elliptic-bound+lip}
|u_T(x) - u_T(y)| \leqslant C(1+ |x - y|)\,.
\end{equation}
Besides, in view of \eqref{eq:VNT}, we can find $C>0$ independent from $m_0$ such that \eqref{eq:elliptic-bound+lip} holds with this $C$ for all $T$ large enough. Moreover this $C$ can be taken arbitrarily small provided $M^\varphi+L^\varphi$ is small enough.

We can then decompose $u_T$ as the sum of a bounded and a Lipschitz continuous functions (with time uniform bounds for both functions). For instance we can consider a $2C(1+\sqrt{d})$-Lipschitz function $v_T$ that coincides with $u_T$ at all points $x\in\mathbb Z^d$ (thanks to \eqref{eq:elliptic-bound+lip}) and then $u_T - v_T$ is uniformly bounded (thanks to \eqref{eq:elliptic-bound+lip} again) uniformly in $T$. The proof is concluded by applying successively the Holley--Stroock
and Aida--Shigekawa perturbation lemmas \cite{HolleyStroockLSI,AidaShigekawaLSI}.
\end{proof}

\section{Sharp PoC for McKean--Vlasov diffusions}\label{sec:MkV-smooth}

\subsection{Settings and notations}

In this section, we consider the non-linear McKean--Vlasov equation on $\R^d$:
\begin{equation}
    \label{eq:McV}
\partial_t m_t = \nabla \cdot \bigl( \sigma^2 \na m_t - F(\cdot, m_t) m_t\bigr)\,,
\end{equation}
which corresponds to \eqref{eq:FP_inhomogeneous} in the case \eqref{eq:McK-drift}. In fact, since we want to apply the results of \cite{LLFSharp}, we consider its settings, which reads
\[F(x,m) = b_0(x) +  \int_{\R^d} b(x,y)m(\dd y)\]
for some $b_0:\R^d\rightarrow \R^d$ and $b:\R^d\times \R^d\rightarrow \R^d$ (which additionally may depend on time in \cite{LLFSharp}, which we don't consider here for simplicity as it is not the case in the examples were are interested in, although it would work similarly). It is  associated to the system of interacting particles
$\bm X=(X^1,\dots,X^N)$ solving
\begin{equation}
    \label{eq:particles}
\forall i\in\cco 1,N\ccf, \qquad \dd X_t^i = b_0\bigl(X_t^i\bigr)\dd t
+ \frac{1}{N-1} \sum_{j\in\cco 1,N\ccf \setminus\{i\}} b\bigl(X_t^i,X_t^j\bigr)\dd t
+ \sqrt 2\sigma \dd B_t^i\,,\end{equation}
where $B^1$, \dots, $B^N$ are independent $d$-dimensional Brownian motions. Denote by $m_t^N$ the law of $\bigl(X_t^1,\dots,X_t^N\bigr)$ and by $m_t^{k,N}$ the law of $\bigl(X_t^1,\dots,X_t^k\bigr)$ for $k\leqslant N$.

The PoC phenomenon describes the fact that, in the system of interacting particles, as $N\rightarrow\infty$, particles become more and more independent, so that $m_t^{k,N}$ converges to $m_t^{\otimes k}$ for a fixed $k$. Up to recently, known results were typically that,
under suitable conditions, for a fixed $t>0$,
$\bigl\Vert m_t^{k,N}-m_t^{\otimes k}\bigr\Vert_{\TV} = \mathcal O\bigl(\!\sqrt{k/N}\bigr)$.
This can be for instance obtained by showing the global estimate $\mathcal H\bigl(m_t^N\big|m_t^{\otimes N}\bigr) = \mathcal O(1)$ (which is optimal)
using then that $\mathcal H\bigl(m_t^N\big|m_t^{\otimes N}\bigr) = (N/k)\mathcal H\bigl(m_t^{k,N}\big|m_t^{\otimes k}\bigr) $ (assuming for simplicity that $n/k\in\N$) and  concluding with Pinsker's inequality.
This $k/N$ rate for the marginal relative entropy (hence $\sqrt{k/N}$ in TV) was thought to be optimal until Lacker showed in \cite{LackerHier} that it is possible to get a rate $k^2\!/N^2$ by working with a BBGKY hierarchy of entropic bounds instead of  simply with the full entropy of the $N$ particles system. We refer to such entropic estimates with a rate $k^2\!/N^2$ as \emph{sharp} PoC, by comparison with other results (the $k^2\!/N^2$ rate being optimal, as it is reached, e.g., in Gaussian cases). The work \cite{LackerHier} deals with finite-time intervals, and the technique is then refined by Lacker and Le Flem in \cite{LLFSharp} to get uniform-in-time sharp PoC in some cases (small interaction in the torus or convex potentials in $\R^d$). A crucial ingredient in their result is a uniform LSI for the solution of the non-linear equation \eqref{eq:McV}. Our results can thus be applied to extend their results to more general cases, allowing for instance for non-convex potentials on $\R^d$.

The rest of this section is organized as follows. In Section~\ref{subsec:LLF} for the reader's convenience we give a brief overview of the general result of Lacker and Le Flem. In Sections~\ref{subsec:SharpPoCPerturbation}
 and \ref{subsec:SharpPoCHighT} we apply respectively Theorems~\ref{thm:perturbation} and \ref{thm:high_temperature} to get, under suitable conditions, uniform-in-time LSI for solutions of the McKean--Vlasov equation, and thus uniform-in-time sharp PoC as a corollary, in cases which are not covered by \cite{LLFSharp}.

\subsection{Lacker and Le Flem's result}\label{subsec:LLF}

First, for the reader's convenience, we recall \cite[Theorem 2.1]{LLFSharp}.  There are two sets of assumptions to apply this result: Assumption \textbf{E} of \cite{LLFSharp} is technical conditions related to well-posedness  of $m$ and $m^N$ and we omit them as they are not important in our discussion (see Proposition~\ref{prop:LLFAssumptionE} below). The second set of assumptions of \cite{LLFSharp} is the following.

\begin{assu}[Assumption \textbf{A} of \cite{LLFSharp}] The following holds.
\begin{enumerate}
    \item $(m_t)_{t\geqslant 0}$ satisfies a uniform LSI with constant $\eta>0$.
    \item $(m_t)_{t\geqslant 0}$ satisfies a uniform transport inequality: there exists $\gamma>0$ such that, for all $t\geqslant 0$, $x\in\R^d$ and $\nu \in\mathcal P(\R^d)$,
       \begin{equation}
        \label{eq:transportLLF}\bigl|\nu\bigl(b(x,\cdot)\bigr) - m_t\bigl(b(x,\cdot)\bigr)\bigl|^2 \leqslant \gamma \mathcal H(\nu|m_t)\,.
         \end{equation}
    \item $(m_t)_{t\geqslant 0}$ and
$\bigl(m_{t\vphantom{\geqslant 0}}^N\bigr){}^{\vphantom{N}}_{t\geqslant 0}$
satisfy this uniform $L^2$ boundedness:
    \begin{equation}
        \label{eq:L2boundLLF}
        \sup_{N\in\N} \sup_{t\geqslant 0} \int_{\R^{dN}} \bigl| b(x_1,x_2) - m_t\bigl(b(x_1,\cdot)\bigr)\bigr|^2 m_t^N(\dd x) < \infty \,.
    \end{equation}
     \end{enumerate}
\end{assu}

When $b$ is bounded, \eqref{eq:L2boundLLF} is trivial and \eqref{eq:transportLLF} follows from Pinsker's inequality. When $y\mapsto b(x,y)$ is Lipschitz continuous uniformly in $x$, \eqref{eq:L2boundLLF} follows from time-uniform second moment bounds, which are classically obtained by Lyapunov arguments, and \eqref{eq:transportLLF} is implied by the uniform LSI.

\begin{thm}[From Theorem 2.1 of \cite{LLFSharp}]\label{thm:LLF}
    Under Assumptions~\textbf{A} and \textbf{E} of \cite{LLFSharp}, assume moreover that $\sigma^4 > 8 \gamma \eta$ and that
\begin{align}
\exists C_0>0,\ \forall N\geqslant 2,\ \forall k\in \cco 1,N\ccf,\qquad  \label{eq:LLF_initialchaos}
&\mathcal H\bigl(m_0^{k,N}\big| m_0^{\otimes k}\bigr) \leqslant C_0 \frac{k^2}{N^2}\,.
\intertext{Then,}
\label{eq:LLF_chaos}
\exists C>0,\ \forall N\geqslant 2,\ \forall k\in \cco 1,N\ccf,\ \forall t\geqslant 0,\qquad
&\mathcal H\bigl(m_t^{k,N}\big| m_t^{\otimes k}\bigr)\leqslant C \frac{k^2}{N^2}\,.
\end{align}
\end{thm}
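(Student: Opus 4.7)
The statement is essentially an application of \cite[Theorem~2.1]{LackerLeFlem}, so my plan is to follow the BBGKY hierarchy of entropic bounds introduced by Lacker in \cite{Lacker} and refined to the uniform-in-time setting in \cite{LackerLeFlem}. The novelty supplied by the present paper is precisely the uniform LSI required by Assumption~\textbf{A}.1, furnished by Theorems~\ref{thm:high_temperature} and \ref{thm:perturbation}; the transport and $L^2$ bounds are routine under our working hypotheses (bounded or Lipschitz $b$, combined with the uniform LSI and standard Lyapunov second-moment estimates). Once these inputs are fixed, the proof reduces to quoting \cite{LackerLeFlem}, but let me outline the mechanism.

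Set $H_k(t) \coloneqq \mathcal{H}\bigl(m_t^{k,N} \big| m_t^{\otimes k}\bigr)$. The first step is to derive the BBGKY equation for $m_t^{k,N}$: its Fokker--Planck equation couples to $m_t^{k+1,N}$ through the symmetric interaction with the $N-k$ remaining particles. Differentiating $H_k(t)$ in time and integrating by parts yields a dissipation contribution $-\sigma^2 \mathcal I\bigl(m_t^{k,N}\big|m_t^{\otimes k}\bigr)$ together with drift errors that encode, for each tagged particle $i \leqslant k$, the discrepancy between the empirical mean field felt in the $N$-particle system and its McKean--Vlasov counterpart $m_t\bigl(b(x_i,\cdot)\bigr)$.

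The next step is to split these errors into (i) a diagonal piece involving $b(x_i,x_j)$ with $j\leqslant k$, which produces an $\mathcal O(k^2/N^2)$ contribution via the uniform $L^2$ bound \eqref{eq:L2boundLLF}, and (ii) a fluctuation piece of the form $\int b(x_i,y) \bigl[m_t^{k+1,N}(\cdot,\dd y) - m_t^{k,N}(\cdot)\,m_t(\dd y)\bigr]$, which by the transport inequality \eqref{eq:transportLLF} is controlled in terms of $H_{k+1}$. Combining these estimates with Cauchy--Schwarz/Young to decouple the gradient factor from the drift factor, and invoking the LSI $\mathcal I \geqslant 4 H_k/\eta$ on the dissipation side, one ends up with a differential hierarchy of the form
\[
\frac{\dd}{\dd t} H_k \leqslant -\frac{2\sigma^2}{\eta}H_k + A\,\frac{k^2}{N^2} + B\,\frac{k}{N-1}\,H_{k+1},
\]
for constants $A$, $B$ depending on $\gamma$, $\eta$ and $\sigma$.

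The final step is to propagate the desired $k^2/N^2$ bound by iterating this hierarchy downwards from the easy global estimate $H_N(t) = \mathcal O(1)$ (obtained by a standard Gronwall argument on the full entropy $\mathcal H\bigl(m_t^N|m_t^{\otimes N}\bigr)$, again using the uniform LSI). The main obstacle, and the point where the assumption $\sigma^4 > 8\gamma\eta$ enters decisively, lies in this quantitative bookkeeping: one has to verify that the contraction rate $2\sigma^2/\eta$ strictly dominates the cascade coefficient $B$ so that the infinite linear system admits a uniform-in-time solution scaling as $k^2/N^2$ rather than the naive $k/N$. This balance, carefully carried out in \cite{LackerLeFlem}, is exactly what upgrades the standard rate to the sharp one and closes the proof.
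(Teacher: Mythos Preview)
The paper does not give its own proof of this statement: Theorem~\ref{thm:LLF} is simply quoted from \cite[Theorem~2.1]{LackerLeFlem} and used as a black box, with the surrounding discussion (Section~\ref{subsec:LLF}, Remark~\ref{rem:LLF}, Proposition~\ref{prop:LLFAssumptionE}) devoted only to verifying its hypotheses in the cases of interest. Your outline is a faithful sketch of the BBGKY entropy hierarchy argument of \cite{Lacker,LackerLeFlem}, so there is no discrepancy to report; but be aware that the paper itself treats the result as an external input rather than something to be reproved.
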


\begin{rem}
    \label{rem:LLF}
    As in Remark~\ref{rem:1}, it is in fact sufficient to enforce Assumption \textbf{A} with the condition $\sigma^4 > 8 \gamma \eta$ for times $t\geqslant t_0$ for some $t_0$ and apply Theorem~\ref{thm:LLF} to $(m_{t+t_0})_{t\geqslant 0}$. More precisely, for some $t_0$, assume that \eqref{eq:transportLLF} and \eqref{eq:L2boundLLF} holds uniformly over $t\in[0,t_0]$. Then, assuming the initial chaos \eqref{eq:LLF_initialchaos}, \cite[Theorem 2.2]{LackerHier} gives \eqref{eq:LLF_chaos} for some constant $C>0$ uniformly over $t\in[0,t_0]$. In particular, the initial chaos \eqref{eq:LLF_initialchaos} holds for $(m_{t+t_0})_{t\geqslant 0}$.
\end{rem}

In \cite{LLFSharp}, the assumptions of Theorem~\ref{thm:LLF} are shown to hold in two cases: either convex potentials on $\R^d$, or models on the torus. In any cases, the condition $\sigma^{4} > 8 \gamma \eta$ (corresponding to $r_c>1$ with the notation of \cite[Theorem 2.1]{LLFSharp}) means that the PoC estimates  require that either the temperature $\sigma^2$ is high enough or the strength of the interaction is small enough. In Sections~\ref{subsec:SharpPoCPerturbation} and \ref{subsec:SharpPoCHighT} we extend the range of application of \cite[Theorem 2.1]{LLFSharp} to some cases with non-convex potentials on $\R^d$.

Before that, in order to focus on the uniform LSI afterwards,  let us state a result concerning the other technical conditions, which is sufficient for the cases considered in the two next sections.
\begin{assu}\label{assu:initial_cond}
    The initial conditions $m_0$ and $m_0^N$ have finite moments of all orders, $m_0^N$ is exchangeable and there exists $C$ independent from $N$ such that $\int_{\R^d}|x_1|^2 m_0^{1,N}(\dd x_1) \leqslant C$.
\end{assu}

We omit the proof of the next result, the arguments are the same as in \cite[Corollary 2.7]{LLFSharp}.

\begin{prop}\label{prop:LLFAssumptionE}
Assume that $b_0$ and $b$ are $\mathcal C^1$,
that $|b_0|$ grows at most polynomially,
that $b$ is the sum of a bounded and a Lipchitz continuous function,
and that there exist $c$, $C>0$ such that for all $x$, $y\in\R^d$,
    \[\bigl(b_0(x)+b(x,y)\bigr)\cdot x \leqslant - c |x|^2 + C(1+|y|)\,.\]
     Then, under Assumption~\ref{assu:initial_cond},  $(m_t)_{t\geqslant 0}$
and $\bigl(m_{t\vphantom{\geqslant 0}}^N\bigr){}^{\vphantom{N}}_{t\geqslant 0}$ are well defined and Assumption \textbf{E} of \cite{LLFSharp} and the uniform $L^2$ boundedness \eqref{eq:L2boundLLF} holds.
\end{prop}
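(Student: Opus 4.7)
My plan is to verify, in order: (a) global strong well-posedness of the particle system \eqref{eq:particles} and of the nonlinear equation \eqref{eq:McV}; (b) uniform-in-$N$ and uniform-in-$t$ moment bounds of all orders; (c) the remaining technical conditions of Assumption \textbf{E} and the $L^2$ bound \eqref{eq:L2boundLLF}. Throughout, I will use the decomposition $b = b^{\text B} + b^{\text L}$ with $b^{\text B}$ bounded and $b^{\text L}$ Lipschitz, which yields the linear growth estimate $|b(x,y)| \leqslant M + L(|x|+|y|)$.

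For well-posedness, the coefficients of the particle system are locally Lipschitz (being $\mathcal C^1$), with linear growth for $b$ and polynomial growth for $b_0$. Standard SDE theory then gives local strong solutions, and the Lyapunov condition rules out explosion, yielding global strong well-posedness. For the nonlinear equation, a Picard fixed-point argument in the space of continuous measure-valued paths with bounded second moment equipped with the Wasserstein-1 distance gives local existence: the Wasserstein-1 Lipschitz dependence of $F(x,\cdot)$ on the measure (via $b^{\text L}$, with the bounded part contributing only a sup-norm term) yields contraction on a short interval, and the a priori moment bounds derived below then allow iteration to arbitrary horizons.

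For uniform moment bounds, I apply Itô's formula together with exchangeability to $|X_t^1|^2$. The Lyapunov assumption gives
\[
\tfrac{1}{N-1} \sum_{j \neq 1} X_t^1 \cdot \bigl(b_0(X_t^1) + b(X_t^1,X_t^j)\bigr)
\leqslant -c|X_t^1|^2 + C\Bigl(1 + \tfrac{1}{N-1}\sum_{j\neq 1}|X_t^j|\Bigr),
\]
so that after taking expectation,
\[
\tfrac{\dd}{\dd t} \Expect |X_t^1|^2 \leqslant -2c \Expect |X_t^1|^2 + 2C + 2C\Expect|X_t^1| + 2d\sigma^2.
\]
Young's inequality absorbs the $\Expect|X_t^1|$ term into $c\Expect|X_t^1|^2 + C^2/c$, so Grönwall gives $\sup_{N,t,i}\Expect|X_t^i|^2 < \infty$. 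Higher moments follow inductively by applying Itô to $|X_t^1|^{2p}$ and using further Young applications. The same computation applied to the nonlinear SDE gives $\sup_t \int |x|^{2p} m_t(\dd x) < \infty$ for every $p \geqslant 1$.

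The $L^2$ bound \eqref{eq:L2boundLLF} is then immediate: by linear growth of $b$, the integrand $|b(x_1,x_2) - m_t(b(x_1,\cdot))|^2$ is dominated by $C\bigl(1 + |x_1|^2 + |x_2|^2 + \int |y|^2 m_t(\dd y)\bigr)$, whose $m_t^N$-integral is uniformly finite by exchangeability and Step~(b). Assumption \textbf{E} of \cite{LackerLeFlem} is a collection of standard regularity and integrability statements on $m$, $m^N$ and their densities, all of which reduce to the same moment and smoothness inputs; I will transcribe the verification in \cite[Corollary 2.7]{LackerLeFlem}. The only mildly delicate point I expect is the inductive step for higher moments, since the Lyapunov condition only provides $|y|$ rather than $|y|^{2p-1}$; this is handled routinely by Young's inequality trading a low power of $|y|$ against a fraction of the dissipative $-c|x|^{2p}$ factor, but the bookkeeping must be done with care.
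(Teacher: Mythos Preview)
Your proposal is correct and is essentially what the paper intends: the paper itself omits the proof entirely, stating only that ``the arguments are the same as in \cite[Corollary 2.7]{LackerLeFlem}.'' Your sketch of the Lyapunov moment bounds and the reduction of \eqref{eq:L2boundLLF} to uniform second moments is precisely the content of that reference, so there is nothing to add.
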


\subsection{Convergent  trajectories}\label{subsec:SharpPoCPerturbation}

In this section we focus on the cases where $m_t$ converges as $t\rightarrow\infty$ towards a stationary solution $m_* $ of the non-linear equation \eqref{eq:McV}.
This is known to hold in various cases of interest, like the granular media equation with convex potentials, or repulsive interaction, or high temperature, or small interaction, or other models like the adaptive biasing force method \cite{LRSLongTime} or the  mean-field gradient descent ascent \cite{LuTwoScale}. So, assume that
\begin{equation}
    \label{eq:CVTV}
\|m_t - m_*\|_{\TV} \underset{t\rightarrow \infty}\longrightarrow 0\,.
\end{equation}
\begin{rem}
Under suitable conditions,  \cite[Theorem 4.1]{RenWangEntropy} allows to obtain \eqref{eq:CVTV} from a  $\mathcal W_2$ convergence.
\end{rem}
We now discuss suitable conditions to apply Theorem~\ref{thm:perturbation} with $M^\varphi$, $L^{\varphi}$ arbitrarily small for large times, where we decompose the drift $F(x,m_t) = a_0(x) + g_t(x)$ with $a_0(x) = F(x,m_*)$ and $g_t(x) = F(x,m_t)-F(x,m_*)$. For simplicity we focus on the case where
\begin{equation}
    \label{eq:Fxm}
F(x,m) = - \na V(x) - \int_{\R^d} \na_x W(x,y) m (\dd y)\,,
\end{equation}
for some $V \in\mathcal C^2(\R^d,\R)$ and $W\in\mathcal C^2(\R^{d}\times \R^d,\R)$. The next result would be easily adapted to other cases where the
density of the stationary solutions of \eqref{eq:McV} are explicit or solve an explicit fixed-point equation (namely when the invariant measure of $\sigma^2\Delta + F(\cdot,m)\cdot \na $ is explicit for each $m$), which is for instance the case in \cite{LRSLongTime,LuTwoScale}.

\begin{prop}
    \label{prop:ConvergentMeanField}
    Let $(m_t)_{t\geqslant 0}$ be a solution to \eqref{eq:McV} (in the case \eqref{eq:Fxm}) which converges in TV in long time towards a stationary solution $m_*$. Assume  that $m_0$ admits a density $e^{u_0}$ with respect to $m_*$, with $u_0$ being the sum of a bounded and a Lipschitz continuous function. Assume furthermore that there exists $L$, $\alpha>0$ such that, for all $x$, $y\in\R^d$,
    \begin{equation}
    \label{eq:condWxy}
      |\Delta_x W(x,y)| \leqslant L\,,\qquad  |\na_x W(x,y)| \leqslant \frac{L}{1+|x-y|^\alpha}\,,\qquad |\na V(x)| \leqslant L(1+|x|^\alpha)\,.
    \end{equation}
    Finally, assume that $V$ is strongly convex outside of a compact set. Then, $(m_t)_{t\geqslant0}$ satisfies a uniform LSI. Moreover, as $t\rightarrow \infty$, the optimal LSI constant of $m_t$ converges to  the optimal LSI constant of $m_*$.
\end{prop}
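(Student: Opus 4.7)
The plan is to apply Theorem~\ref{thm:perturbation} to a decomposition of the drift centred at the limiting profile. Set
\[
a_0(x) \coloneqq F(x, m_*) = -\nabla V_*(x)\,,\qquad g_t(x)\coloneqq F(x,m_t) - F(x,m_*) = -\int_{\R^d} \nabla_x W(x,y)(m_t-m_*)(\dd y)\,,
\]
where $V_*(x) \coloneqq V(x) + \int_{\R^d} W(x,y)\, m_*(\dd y)$ is an effective potential. The strong convexity of $V$ outside a compact set, combined with the Laplacian bound $|\Delta_x W|\leqslant L$ and the local $C^1$ bounds on $W$, guarantees that $V_*$ is again strongly convex outside a (possibly larger) compact set. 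Consequently the unique invariant probability of $\mathcal L_0 = a_0\cdot\nabla + \sigma^2\Delta$ is exactly the stationary solution $\mu_0 = m_*$, and an LSI for $m_*$ with some constant $C_0>0$ follows from Bakry--Émery together with the Holley--Stroock perturbation lemma applied on the compact non-convex region.

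The contraction condition \eqref{eq:condition_contraction} for $\tilde b_t = 2\nabla\ln \mu_0 - b_t$ reduces to a one-sided Lipschitz bound whose leading term at infinity is a positive multiple of $-\nabla V_*$, perturbed by the $W$-integrals which have uniformly bounded $C^1$ norms by \eqref{eq:condWxy}; the strong convexity of $V_*$ at infinity then provides the required $-\rho|x-y|^2$ bound outside a large ball and an $L|x-y|^2$ bound globally. Since the hypothesis on $u_0 = \ln(\dd m_0/\dd m_*)$ is exactly the initial-data condition of Theorem~\ref{thm:perturbation}, its first conclusion already delivers a uniform LSI for $(m_t)_{t\geqslant 0}$.

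The main obstacle is to control $\varphi_t = -\nabla\cdot g_t + g_t\cdot\nabla\ln\mu_0$ so that the perturbation parameters $M^\varphi + L^\varphi$ can be made arbitrarily small for large $t$. The divergence part $\nabla\cdot g_t(x) = -\int \Delta_x W(x,y)(m_t-m_*)(\dd y)$ is immediately bounded by $L\|m_t-m_*\|_{\TV}$, which tends to $0$ by \eqref{eq:CVTV}, and a similar argument on its spatial derivative yields a vanishing Lipschitz constant. The term $g_t\cdot\nabla\ln\mu_0 = -g_t\cdot\nabla V_*/\sigma^2$ is more delicate since $|\nabla V_*(x)|$ may grow like $1+|x|^\alpha$. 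To compensate, I would split the integral defining $g_t(x)$ at $|y|\leqslant |x|/2$: on this set the decay bound $|\nabla_x W(x,y)|\leqslant L/(1+(|x|/2)^\alpha)$ produces a factor $(1+|x|)^{-\alpha}$ multiplying $\|m_t-m_*\|_{\TV}$, exactly cancelling the growth of $|\nabla V_*|$; on the complementary tail one interpolates between the TV bound $\|m_t-m_*\|_{\TV}$ and a polynomial-moment bound on $m_t + m_*$ (obtained uniformly in $t$ by a Lyapunov argument thanks to the confining $V_*$), producing a contribution of order $\|m_t-m_*\|_{\TV}^{1-\alpha/p}$ for any $p>\alpha$. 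The Lipschitz estimate for $g_t\cdot\nabla V_*$ is handled analogously by differentiating under the integral.

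Given these ingredients, for any $\varepsilon>0$ I pick $t_1$ large so that $M^\varphi + L^\varphi$ for the shifted family $(m_{t_1+t})_{t\geqslant 0}$ falls below the threshold $\eta$ from the last conclusion of Theorem~\ref{thm:perturbation} (the initial log-density of the shifted family being bounded and Lipschitz, a property inherited from applying Theorem~\ref{thm:perturbation} on $[0,t_1]$). The theorem then yields an LSI for $m_t$ with constant $C_0+\varepsilon$ for all $t$ large enough, so that $\limsup_{t\to\infty} C(m_t)\leqslant C_0$, where $C(m_t)$ denotes the optimal LSI constant of $m_t$. The matching lower bound $\liminf_{t\to\infty} C(m_t)\geqslant C_0$ follows by testing the LSI of $m_t$ at any fixed $h\in\mathcal C^1_{\textnormal c}$ and observing that $\int h^2\ln h^2\dd m_t$, $\int h^2\dd m_t$ and $\int|\nabla h|^2\dd m_t$ all converge to their $m_*$-counterparts under the TV convergence \eqref{eq:CVTV}, before taking the supremum over $h$.
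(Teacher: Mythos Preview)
Your approach is the same as the paper's: decompose the drift around $m_*$, apply Theorem~\ref{thm:perturbation}, and show $M^\varphi+L^\varphi\to 0$ as $t\to\infty$. The lower bound on the limiting LSI constant via testing with a fixed $h$ is also exactly what the paper does. Two points deserve correction.

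First, the Lipschitz claims on $\varphi_t$ are unjustified and unnecessary. You assert that ``a similar argument on its spatial derivative yields a vanishing Lipschitz constant'' for $\nabla\cdot g_t$ and that the Lipschitz bound for $g_t\cdot\nabla V_*$ follows ``by differentiating under the integral''. Both would need bounds on $\nabla_x\Delta_x W$, $\nabla^2_x W$, or $\nabla^2 V$, none of which are assumed in \eqref{eq:condWxy}. Fortunately this is superfluous: the paper simply takes $\varphi_{1,t}=\varphi_t$ and $\varphi_{2,t}=0$, so only $\|\varphi_t\|_\infty\to 0$ is needed.

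Second, your reasoning for the contraction condition \eqref{eq:condition_contraction} is slightly off. The bound $|\Delta_x W|\leqslant L$ controls only the trace of $\nabla^2_x W$, not the full Hessian, so it does not make $V_*$ strongly convex outside a compact. The correct argument (used in the paper) is that $\nabla_x W$ is \emph{bounded} by \eqref{eq:condWxy}, hence the $W$-contributions to $\tilde b_t=-\nabla(V+2m_*\star W-m_t\star W)$ are uniformly bounded, and \eqref{eq:condition_contraction} follows directly from the strong convexity of $V$ outside a compact set.

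Regarding the key estimate on $g_t\cdot\nabla\ln m_*$, your spatial splitting at $|y|\leqslant|x|/2$ with interpolation between $\|m_t-m_*\|_{\TV}$ and uniform moments is a valid alternative to the paper's device, which couples $(Y,Y')$ optimally in TV and applies Cauchy--Schwarz to obtain the cleaner bound $\|\varphi_t\|_\infty\lesssim\|m_t-m_*\|_{\TV}^{1/2}$. Both routes work; the coupling argument avoids having to track the $|x|$-dependence explicitly.
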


Notice that, $V$ being strongly convex outside a compact set, the last condition of \eqref{eq:condWxy} can only hold with some $\alpha \geqslant 1$. Hence, the second condition of \eqref{eq:condWxy} on $\na W$ means that we only consider local interactions.

\begin{proof}
Considering the decomposition $F(x,m_t) = a_0(x) + g_t(x)$ with $a_0(x) = F(x,m_*)$ and $g_t(x) = F(x,m_t)-F(x,m_*)$,
we have to show that   Theorem~\ref{thm:perturbation} applies to $(m_{t+t_0})_{t\geqslant 0}$  with $M^\varphi$, $L^{\varphi}$ arbitrarily small provided $t_0$ is large enough. Indeed, the last part of Theorem~\ref{thm:perturbation}  will then give that, for any $\varepsilon>0$, the optimal LSI constant of $m_t$ is less than $C_0+\varepsilon$ for $t$ large enough, where $C_0$ is the optimal LSI constant of $m_*$. On the other hand, for any $\varepsilon>0$, there exists a non-constant $\mathcal C^\infty$ function $f$ with compact support such that
\[m_*(f^2 \ln f^2) - m_*(f^2) \ln m_*(f^2) \geqslant (C_0-\varepsilon) m_* |\na f|^2\,.
 \]
 The weak convergence implied by \eqref{eq:CVTV} leads to
 \[m_t(f^2 \ln f^2) - m_t(f^2) \ln m_t(f^2) \geqslant (C_0-2\varepsilon) m_t |\na f|^2
 \]
 for $t$ large enough, which implies that the optimal LSI constant  of $m_t$ is larger than $C_0 -2\varepsilon$.

 Hence, we turn to the application of Theorem~\ref{thm:perturbation} using its notations. We write $m\star W(x) = \int_{\R^d} W(x,y) m(\dd y)$. The invariant measure of $a_0 \cdot \na + \sigma^2 \Delta$ is $\mu_0 = m_*$, with $\na \ln m_* = -\na (V + m_*\star W)=F(\cdot,m_*)$, so that
 \[\tilde b_t(x) = -\na (V + 2 m_*\star W -  m_t \star W) \,.\]
Since $\na_x W$ is bounded by \eqref{eq:condWxy}, the contribution of $W$ in $\tilde b_t$ is bounded (uniformly in $t$) and thus \eqref{eq:condition_contraction}
 holds thanks to the convexity of $V$ outside a compact set.  From \eqref{eq:condWxy},
\[|\na\cdot g_t(x) | = |(m_t-m_*)\star \Delta_x W(x)| \leqslant L \| m_t-m_*\|_{\TV}\,, \]
and, given $(Y,Y')$ an optimal TV coupling of $m_t$ and $m_*$ and using the Cauchy--Schwarz inequality,
\begin{align*}
\MoveEqLeft | g_t(x) \cdot \na \ln m_*(x) |\\
& \leqslant \bigl| \Expect [ \na_x W(x,Y) - \na_x W(x,Y')]\bigr| L (2+|x|^\alpha)    \\
& \leqslant \Expect \biggl[\1_{Y\neq Y'} \biggl(\frac{1}{1+|x-Y|^\alpha} + \frac{1}{1+|x-Y'|^\alpha}\biggr)\biggr] L^2 (2+|x|^\alpha)  \\
& \leqslant \lVert m_t-m_*\rVert_{\TV}^{1/2} \Expect \Biggl[\biggl(\frac{1}{1+|x-Y|^\alpha} + \frac{1}{1+|x-Y'|^\alpha}\biggr)^{\!2}\Biggr]^{\!1/2}  L^2 (2+|x|^\alpha)\,.
\end{align*}
Then we bound, for the first term in the expection,
\begin{align*}
\Expect \biggl[\frac{1}{(1+\lvert x-Y\rvert^\alpha)^2 }\biggr]
&\leqslant \frac{1}{(1+\lvert x/2\rvert^\alpha)^2} + \Proba[|Y| \geqslant |x|/2]
\\&\leqslant \frac{1}{(1+\lvert x/2\rvert^\alpha)^2}
+ \frac{1+\Expect[|Y|^{2\alpha}]}{1+|x/2|^{2\alpha}}\,,
\end{align*}
and similarly for the second term involving $m_*$. Using that $V$ is convex outside a compact set and that $\na_x W$ is bounded we easily get by Lyapunov arguments that the moments of $m_t$ are bounded uniformly in time. As a consequence,
we have obtained,
for $\varphi_t \coloneqq -\na \cdot g_t + g_t \cdot \na \ln\mu_0$, a bound
\[\|\varphi_t\|_\infty \leqslant L' \|m_t-m_*\|_{\TV}^{1/2}  \]
for some $L'$ independent from $t$. The TV convergence \eqref{eq:CVTV} concludes the proof.
\end{proof}

\begin{cor}\label{cor:PoCperturbation}
    Under   Assumption~\ref{assu:initial_cond} and the settings of Proposition~\ref{prop:ConvergentMeanField}, assume furthermore that $W$ is bounded and $V = V_1+V_2$ where $V_1$ is $\rho$-strongly convex and $V_2$ is bounded. Assume that
    \begin{equation}
        \label{eq:condTempCorollary}
         \sigma^2 > \frac{4}\rho \|\na_x W\|_\infty^2 \exp\biggl(\frac{\|V_2\|_\infty+\| W\|_\infty}{\sigma^2}\biggr)\,.
    \end{equation}
    Then, provided the initial PoC \eqref{eq:LLF_initialchaos} holds, so does the uniform in time sharp PoC \eqref{eq:LLF_chaos}.
\end{cor}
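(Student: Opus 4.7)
The plan is to verify the hypotheses of Theorem~\ref{thm:LLF}, invoking Remark~\ref{rem:LLF} to apply it to the shifted family $(m_{t_*+t})_{t\geqslant 0}$ for some sufficiently large $t_*$, the initial interval $[0,t_*]$ being handled by \cite[Theorem~2.2]{Lacker}. The only non-trivial inputs beyond standard Lyapunov bounds are the uniform LSI (controlled in the long-time limit by the LSI constant $\eta_*$ of the stationary measure $m_*$), the uniform transport inequality \eqref{eq:transportLLF}, the uniform $L^2$-bound \eqref{eq:L2boundLLF}, the technical Assumption~\textbf{E} of \cite{LackerLeFlem}, and the quantitative threshold $\sigma^4 > 8\gamma\eta$.

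First, Proposition~\ref{prop:ConvergentMeanField} applies directly, since its hypotheses are all part of the settings of the corollary; it delivers the uniform LSI, with the optimal LSI constant of $m_t$ converging as $t\to\infty$ to $\eta_*$. I would then estimate $\eta_*$ explicitly. Since $m_*\propto \exp\bigl(-(V_1+V_2+m_*\star W)/\sigma^2\bigr)$, Bakry--Émery applied to the $\rho/\sigma^2$-log-concave factor $e^{-V_1/\sigma^2}$ yields an LSI constant of order $\sigma^2/\rho$, and the Holley--Stroock perturbation lemma absorbs the bounded remainder $V_2+m_*\star W$ at the price of an exponential factor controlled by $(\|V_2\|_\infty+\|W\|_\infty)/\sigma^2$. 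After taking $t\geqslant t_*$ large enough, the uniform LSI constant $\eta$ can thus be made arbitrarily close to $(\sigma^2/\rho)\exp\bigl((\|V_2\|_\infty+\|W\|_\infty)/\sigma^2\bigr)$.

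Second, the transport inequality~\eqref{eq:transportLLF} follows directly from Pinsker's inequality applied to the bounded test function $b(x,\cdot) = -\nabla_x W(x,\cdot)$, giving $\gamma \lesssim \|\nabla_x W\|_\infty^2$. The remaining Assumption~\textbf{E} of \cite{LackerLeFlem} together with the uniform $L^2$-bound~\eqref{eq:L2boundLLF} are supplied by Proposition~\ref{prop:LLFAssumptionE}, whose drift Lyapunov hypothesis is an immediate consequence of the strong convexity of $V_1$ outside a compact set (inherited from the settings of Proposition~\ref{prop:ConvergentMeanField}) and of the boundedness of $W$ and $\nabla_x W$.

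Plugging these bounds on $\eta$ and $\gamma$ into $\sigma^4 > 8\gamma\eta$ reduces, up to constants, to the threshold \eqref{eq:condTempCorollary}, whereupon Theorem~\ref{thm:LLF} applied to $(m_{t_*+t})_{t\geqslant 0}$ yields the uniform-in-time sharp PoC~\eqref{eq:LLF_chaos}. The delicate point I anticipate is the second step: matching exactly the prefactor $4/\rho$ and the single-multiplicity exponent in~\eqref{eq:condTempCorollary} requires using the sharpest forms of Bakry--Émery and Holley--Stroock (and of Pinsker when evaluating $\gamma$), rather than their crude versions, which would produce a larger prefactor and a doubled exponent. Everything else is a straightforward assembly of ingredients already in place.
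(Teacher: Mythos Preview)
Your proposal is correct and follows essentially the same route as the paper: Proposition~\ref{prop:LLFAssumptionE} for Assumption~\textbf{E} and \eqref{eq:L2boundLLF}, Pinsker for \eqref{eq:transportLLF}, Proposition~\ref{prop:ConvergentMeanField} for the uniform LSI with long-time constant approaching that of $m_*$, Bakry--\'Emery plus Holley--Stroock to bound the latter, and Remark~\ref{rem:LLF} to restrict the threshold check to large times. For the constants you flagged as delicate: the paper takes $\gamma = \tfrac12\|\nabla_x W\|_\infty^2$ from Pinsker and $\eta \leqslant \sigma^2\rho^{-1}\exp\bigl((\|V_2\|_\infty+\|W\|_\infty)/\sigma^2\bigr)$, so that $8\gamma\eta < \sigma^4$ is exactly \eqref{eq:condTempCorollary}.
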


This applies to cases on $\R^d$ where $V$ is not convex, which are not covered by \cite{LLFSharp}. In general cases where $V$ may have several local minima, a  condition in the spirit of \eqref{eq:condTempCorollary}, that states that either temperature is large enough or interaction is small enough, is necessary to have a uniform-in-time propagation of chaos estimate.

\begin{proof}
    The assumptions of Proposition~\ref{prop:ConvergentMeanField} imply those of Proposition~\ref{prop:LLFAssumptionE}. Since $\na_x W$ is bounded, Pinsker's inequality gives  the transport inequality \eqref{eq:transportLLF} with $\gamma = \|\na_x W\|_\infty^2/2$.  Proposition~\ref{prop:ConvergentMeanField}  provides the uniform LSI for $(m_t)_{t\geqslant 0}$. Moreover, for large times, the LSI constant of $m_t$ converges to the LSI constant $C_*$ of $m_*$, which by the Bakry--Émery and Holley--Stroock results is less than $\sigma^2 \rho^{-1} \exp \bigl((\|V_2\|_\infty+\| W\|_\infty)/\sigma^{2}\bigr)$. Corollary~\ref{cor:PoCperturbation} thus follows from Theorem~\ref{thm:LLF} (since, as noticed in Remark~\ref{rem:LLF}, the condition $\sigma^4 > 8 \gamma \eta$ only has to be verified for sufficiently long times).
 \end{proof}

\subsection{High temperature regime}\label{subsec:SharpPoCHighT}

Instead of Corollary~\ref{cor:PoCperturbation}, using Theorem~\ref{thm:high_temperature}, we can get an alternative result, which doesn't require the a priori knowledge that $m_t$ converges in large time and with weaker assumptions on $W$, but which only works at high temperature and is less explicit (an explicit condition on $\sigma^2$ can be obtained in principle by checking the proofs, but it wouldn't be as nice as \eqref{eq:condTempCorollary}). In the next statement we consider a solution  $(m_t)_{t\geqslant 0}$ of \eqref{eq:McV} in the case \eqref{eq:Fxm}.

\begin{prop}\label{prop:PoCtemperature}
Under Assumption~\ref{assu:initial_cond}, assume furthemore
that $|\na U|$ grows at most polynomially,
that there exist $\rho$, $L$, $R>0$ such that, for all $z\in\R^d$, $\psi_z:=-\na U - \na_x W(\cdot,z)$ satisfies
     \begin{equation}
        \label{eq:cond_drift_hightemp_2}
     \bigl(\psi_z(x) -\psi_z(y)\bigr) \cdot (x-y) \leqslant \begin{cases}
- \rho |x-y|^2     &  \forall x,y\in\R^d~\text{with}~|x|\geqslant R\,,\\
L|x-y|^2     &  \forall x,y\in\R^d\,,
\end{cases}
\end{equation}
and that $\na_x W = F_1+F_2$ where $F_1$ is bounded and $y\mapsto F_2(x,y)$ is $L_W$-Lipschitz with $8L_W^2 < \rho$, uniformly in $x$. Then, there exists $\sigma_*^2>0$ (which depends on $U$, $W$ and $d$) such that, assuming   $\sigma^2 \geqslant \sigma_*^2$ and  the initial sharp PoC \eqref{eq:LLF_initialchaos}, we have that the uniform in time sharp PoC \eqref{eq:LLF_chaos} holds.
\end{prop}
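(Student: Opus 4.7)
The plan is to verify the hypotheses of Theorem~\ref{thm:LLF} for $(m_t)_{t\geqslant 0}$ (or rather for $(m_{t+t_0})_{t\geqslant 0}$ for a suitable $t_0$, see Remark~\ref{rem:LLF}), using Theorem~\ref{thm:high_temperature} as the source of the uniform log-Sobolev inequality. This is the direct analogue of Corollary~\ref{cor:PoCperturbation}, but with the LSI coming from high diffusivity rather than from perturbation of a reference measure.

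First I would verify the hypotheses of Theorem~\ref{thm:high_temperature} for the drift $b_t(x) = -\na U(x) - \int \na_x W(x,y) m_t(\dd y)$. Integrating the pointwise contraction bound \eqref{eq:cond_drift_hightemp_2} against $m_t(\dd z)$ directly gives \eqref{eq:cond_drift_hightemp} for $b_t$ with the same constants $\rho$, $L$, $R$. The local bound \eqref{eq:K} then reduces to a time-uniform second-moment estimate on $m_t$, which itself follows from a Lyapunov argument for \eqref{eq:EDS_inhomogeneous} based on the contraction of $b_t$ together with the $L_W$-Lipschitz control on $F_2$: the $|y|$-contribution to $b_t(0)$ is absorbed into $\Expect|X_t|^2$ via Young's inequality, which closes because $L_W$ is small compared to $\rho$. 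Theorem~\ref{thm:high_temperature} then yields a uniform LSI for $(m_t)$; for any $\varepsilon>0$ and $\sigma$ large enough, the LSI constant is bounded by $\eta_\varepsilon \coloneqq \sigma^2(\rho^{-1}+\varepsilon)$ after some transient time.

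Next I would check Assumption~\textbf{A} of \cite{LackerLeFlem}. Item (1) is the uniform LSI just obtained. For the transport inequality \eqref{eq:transportLLF}, decompose $b(x,\cdot) = -F_1(x,\cdot) - F_2(x,\cdot)$: the $F_1$ part is handled by Pinsker's inequality, and the $F_2$ part by Kantorovich--Rubinstein duality followed by Talagrand's $\mathrm T_2$ inequality (implied by the LSI through Otto--Villani). This yields a uniform $\gamma$ of the form $C_1 \|F_1\|_\infty^2 + C_2 L_W^2 \eta_\varepsilon$. Item (3), the uniform $L^2$ bound on $m_t^N$, follows from the same bounded-plus-Lipschitz decomposition together with time-uniform second-moment bounds on the particle system \eqref{eq:particles}, obtained by an analogous Lyapunov argument (again using $L_W \ll \rho$). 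The technical Assumption~\textbf{E} is provided by Proposition~\ref{prop:LLFAssumptionE}, whose drift coercivity hypothesis follows from \eqref{eq:cond_drift_hightemp_2} taken at $y=0$ and a Young-type splitting of the $L_W|y|$ term.

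It remains to enforce the compatibility condition $\sigma^4 > 8\gamma\eta_\varepsilon$. For large $\sigma$, $\gamma$ is dominated by $C_2 L_W^2 \eta_\varepsilon \sim C_2 L_W^2 \sigma^2/\rho$, so the condition reduces, up to a fixed constant, to $1 > C L_W^2/\rho^2$. The main obstacle---and the reason for the precise form of the assumption $8L_W^2 < \rho$---is to track all the constants arising from Pinsker, $(a+b)^2 \leqslant 2a^2 + 2b^2$, and the Otto--Villani conversion from LSI to $\mathrm T_2$ carefully enough that, choosing $\varepsilon$ small and then $\sigma^2 \geqslant \sigma_*^2$ large, every condition holds simultaneously. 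Once this is done, Theorem~\ref{thm:LLF} applied to $(m_{t+t_0})_{t\geqslant 0}$, combined with Remark~\ref{rem:LLF} to cover the transient $t\in[0,t_0]$ via \cite[Theorem 2.2]{Lacker}, upgrades the initial sharp chaos \eqref{eq:LLF_initialchaos} to the uniform-in-time estimate \eqref{eq:LLF_chaos}.
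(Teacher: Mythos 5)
Your proposal follows essentially the same route as the paper's proof: integrate \eqref{eq:cond_drift_hightemp_2} to get \eqref{eq:cond_drift_hightemp}, obtain \eqref{eq:K} from a Lyapunov second-moment bound giving $K=C'(1+\sigma)$, invoke Theorem~\ref{thm:high_temperature} for a uniform LSI with constant $\eta\sim\sigma^2(\rho^{-1}+\varepsilon)$ at large times, check \eqref{eq:transportLLF} by splitting $\na_x W=F_1+F_2$ with Pinsker for $F_1$ and Talagrand (via the LSI) for $F_2$, and conclude with $\sigma^4>8\gamma\eta$ for $\sigma$ large via Theorem~\ref{thm:LLF} and Remark~\ref{rem:LLF}. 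The one step you leave open --- reconciling the asymptotic requirement, which you correctly identify as being of the form $1>C L_W^2/\rho^2$, with the stated hypothesis $8L_W^2<\rho$ --- is exactly the bookkeeping the paper performs by inserting a parameter $\theta$ in the Young splitting of $\gamma$ and fixing it so that $8(1+\theta^{-1})L_W^2<\rho$, so your argument is the same modulo that final constant-tracking.
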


In particular, if $U$ is strongly convex outside a compact set and $x\mapsto W(x,z)$ is convex for all $z$ with $\na_x W$ being bounded, then Proposition~\ref{prop:PoCtemperature} applies, without any further smallness condition on the interaction. For instance, with  $W(x,z)= a\sqrt{1+|x-z|^2}$, it applies for any $a>0$. However, in that case, the   temperature threshold $\sigma_*^2$ in Proposition~\ref{prop:PoCtemperature} will depend on $a$ and will become large when $a$ is large (i.e.\ when the interaction is strong).

\begin{proof}
We verify the conditions of Theorem~\ref{thm:LLF}.
Using \eqref{eq:cond_drift_hightemp_2} with $y=0$ we see that Proposition~\ref{prop:LLFAssumptionE} holds.
The uniform LSI in the high temperature regime
$\sigma^2 \geqslant \sigma_0^2$ is ensured by Theorem~\ref{thm:high_temperature},
and for times large enough it holds with a constant $\eta = \sigma^2 \eta'$ for some $\eta'>0$ independent from $\sigma$, and which can be taken arbitrarily close to $1/\rho$ for $\sigma^2$ large enough. Here we have used that $\sup\{ - x\cdot b_{t}(x) : |x|\leqslant R_*\}$ can be bounded by a constant $K$ independent from  $t$ and such that \eqref{eq:conditionTemperature} holds for $\sigma$ large enough (for $t$ large enough). Indeed, we can bound
\[|b_t(x)| \leqslant |\na U(x)| + \|F_1\|_\infty + |F_2(x,0)| + L_W \int_{\R^d} |y| m_t(\dd y)\,.\]
Then,  the condition \eqref{eq:cond_drift_hightemp_2} implies that
$s_t\coloneqq\int_{\R^d}|y|^2 m_t(\dd y)$ satisfies
$\dd s_t/\dd t \leqslant - \rho s_t/2 + q + 2d \sigma^2 $
for some $q>0$ independent from $t$ and $\sigma^2$.
From this, for $t$ large enough, we get
$\int_{\R^d} |y| m_t(\dd y) \leqslant C (1+\sigma)$
where $C$ depends only on $d$, $\rho$, $L$, $R$.
As a consequence, in \eqref{eq:K} we can take $K = C'(1+\sigma)$ for some $C'$ (independent from $t$ and $\sigma$), so that \eqref{eq:conditionTemperature} holds for $\sigma$ large enough, as claimed.

It remains to check the transport inequality \eqref{eq:transportLLF}. For any $\theta>0$ we can bound, for all $t\geqslant 0$, $x\in\R^d$ and $\nu \in\mathcal P(\R^d)$,
\begin{align*}
\MoveEqLeft \bigl|\nu\bigl(b(x,\cdot)\bigr) - m_t\bigl(b(x,\cdot)\bigr)\bigr|^2 \\
&\leqslant (1+\theta)\bigl|\nu\bigl(F_1(x,\cdot)\bigr) - m_t\bigl(F_1(x,\cdot)\bigr)\bigr|^2
+ (1+\theta^{-1})\bigl|\nu\bigl(F_2(x,\cdot)\bigr) - m_t\bigl(F_2(x,\cdot)\bigr)\bigr|^2\\
&\leqslant (1+\theta)\|F_1\|_\infty^2 \|\nu - m_t\|_{\TV}^2
+ (1+\theta^{-1})L_W^2 \mathcal W_2^2(\nu, m_t)\\
&\leqslant \gamma \mathcal H(\nu|m_t)\,,
\end{align*}
where we used Pinsker's and Talagrand's inequalities,
and $\gamma$ on the last line is defined by
\[\gamma = \frac{1+\theta}{2}\|F_1\|_\infty^2 + \sigma^2 \eta' (1+\theta^{-1})L_W^2\,.\]
Fixing $\theta$ (independent from $\sigma$) large enough so that $8(1+\theta^{-1})L_W^2 < \rho $, the condition $\sigma^4 > 8\gamma \eta$ holds for $\sigma$ large enough, which concludes.
\end{proof}

\section{Application to log and Riesz interactions}
\label{sec:MkV-log-Riesz}

In this section, we still consider McKean--Vlasov equations \eqref{eq:McV},
but now we impose the following condition on the non-linear drift.

\begin{assu}
\label{assu:MkV-log-Riesz}
We have $d \geqslant 2$, $s \in [0, d - 1)$
and the McKean--Vlasov drift in \eqref{eq:McV} reads
\[
F(x,m) = - \nabla U(x) + M \nabla g_s \star m (x)\,,
\]
where $U$, $M$, $g_s$ satisfy the following conditions:
\begin{itemize}
\item the function $U : \mathbb R^d \to \mathbb R$
has bounded Hessian $\nabla^2 U \in L^\infty$
and satisfies the weak convexity condition:
there exist $\kappa_U > 0$ and $R \geqslant 0$ such that
for all $x$, $y \in \mathbb R^d$ with $\lvert x - y\rvert \geqslant R$, we have
\[
\langle \nabla U(x) - \nabla U(y), x - y\rangle
\geqslant \kappa_U \lvert x - y \rvert^2\,;
\]
\item $g_s : \mathbb R^d \to \mathbb R$ is
the logarithmic or Riesz potential:
\[
g_s(x) = \begin{cases}
- \ln \lvert x \rvert & \text{when}~s = 0, \\
\lvert x \rvert^{-s} & \text{when}~s > 0;
\end{cases}
\]
\item in the sub-Coulombic case where
$s < d - 2$, $M$ is a $d \times d$ real matrix such that
$M : \nabla^2 g(x) \geqslant 0$ for $x \neq 0$;
in the Coulombic and the super-Coulombic cases where $s \in [d - 2, d - 1)$,
$M$ is anti-symmetric.
\end{itemize}
\end{assu}

These models have raised a high interest over the recent years, in particular with a series of work by Rosenzweig, Serfaty and coauthors on the one hand  (see e.g.\ \cite{RSGlobalConvergenceRiesz,CdCRSUniform,RosenzweigSerfatyMLSI} and references within) and Bresch, Jabin, Wang and coauthors on the other hand (see e.g.\ \cite{JabinWang,BJWMFE,BJWAttractive} and references within).
The main result of the section,
to be stated in Theorem~\ref{thm:log-Riesz-quadratic-unif-POC}
in Section~\ref{sec:log-Riesz-quadratic-unif-POC},
addresses the McKean--Vlasov drift force above with $d \geqslant 2$, $s = 0$,
$M$ being anti-symmetric and $U$ being isotropically quadratic.
We show that in this case
the dynamics exhibits the time-uniform propagation of chaos.
This result is a continuation of a recent work of
Guillin, Le Bris and one of the author \cite{GLBMVortex},
where  the uniform PoC is shown for the dynamics
on the torus (thus in a periodic setting).
We also note that a non-time-uniform result on the whole space
have also been obtained very recently by Feng and Wang
\cite{FWVortex}.
In terms of methodology,
the main addition of our work is that
we employ the reflection coupling  technique of Conforti
\cite{ConfortiCouplReflControlDiffusion}
to get regularity bounds for the mean field flow on the whole space
(Theorems~\ref{thm:MkV-log-Riesz-unif-LSI} and
\ref{thm:MkV-log-Riesz-quadratic-convergence}),
which enable to apply the Jabin--Wang method.

We will write $g = g_s$ if that does not lead to ambiguities.
For simplicity, we also set $\sigma = 1$ in this section.
Under the assumptions above, we denote $K = M \nabla g$,
and the McKean--Vlasov dynamics writes
\begin{equation}
\label{eq:MkV-log-Riesz}
\partial_t m_t = \Delta m_t
- \nabla \cdot \bigl( m_t (K \star m_t - \nabla U) \bigr)\,.
\end{equation}
Note that the interaction kernel $K$ is divergence-free
when the matrix $M$ is anti-symmetric.

Consider now the system of $N$ particles in interaction:
\begin{equation}
\label{eq:ps-log-Riesz-SDE}
\dd X^i_t = -\nabla U \bigl( X^i_t \bigr) \dd t
+ \frac 1{N-1} \sum_{j \in \cco 1, N\ccf \setminus \{i\}}
K\bigl(X^i_t - X^j_t\bigr) \dd t
+ \sqrt 2 \dd W^i_t\,, \quad\text{$i = 1$, \dots, $N$,}
\end{equation}
where $W^i_t$ are $N$ independent Brownian motions.
The flow $m^N_t = \Law(\bm X_t) = \Law(X^1_t, \ldots, X^N_t)$
of probabilities in $\mathbb R^{dN}$ satisfies the Fokker--Planck equation
at least formally:
\begin{equation}
\label{eq:ps-log-Riesz}
\partial_t m^N_t
= \sum_{i=1}^N
\Biggl(\Delta_i m^N_t - \nabla_i \cdot
\biggl( \Bigl( \frac 1{N-1}
\sum_{j\in\cco 1, N\ccf \setminus\{i\}} K(x_i - x_j)
- \nabla U(x^i) \Bigr) m^N_t \biggr) \Biggr)\,.
\end{equation}

In this section, $\eta^\varepsilon$ denotes a $\mathcal C^\infty$ mollifier
with support in $B(0,\varepsilon)$ that is also invariant by rotation.
We set $g^\varepsilon \coloneqq g \star \eta^\varepsilon$
and $K^\varepsilon \coloneqq M \nabla g^\varepsilon
= M \nabla g \star \eta^\varepsilon$.
Since under Assumption~\ref{assu:MkV-log-Riesz},
we are restricted to the case where $s < d - 1$,
the interaction potential $g \propto \lvert x\rvert^{-s}$
is integrable around zero,
so $g^\varepsilon$ is infinitely differentiable with bounded derivatives.
Notice that the rotational invariance of $\eta^\varepsilon$ implies that
the value $g^\varepsilon(x)$ depends only on $\lvert x\rvert$
and thus, $\nabla g^\varepsilon(x)$ is parallel to $x$.
 We also work with the approximation of the confinement
$U^\varepsilon \coloneqq U \star \eta^\varepsilon$.

Sometimes, in the rest of this section, for conciseness, we write $A\lesssim B$ when there exists a constant $C$ such that $A\leqslant CB$.

\subsection{Well-posedness of the mean field and particle systems}

For a function $f : \mathbb R^d \to \mathbb R$ and $\theta \in (0,1]$,
we denote the homogeneous $\theta$-Hölder (semi-)norm of $f$ by
\[
[f]_{\mathcal C^\theta}
= \sup_{x,y \in \mathbb R^d\,:\,x\neq y}
\frac{\lvert f(x) - f(y) \rvert}{\lvert x - y\rvert^\theta}\,.
\]
In order to study the singular interaction kernel $K$,
we use the following crucial estimate.
This generalizes the estimate in (2.9) of \cite{RSGlobalConvergenceRiesz}
(which corresponds to the case $p = \infty$).
 We refer readers to Lemma 4.5.4 and Theorem 4.5.10
of \cite{HoermanderAnalysis} for the proof, where the statement of the latter
should be accompanied with an interpolation.

\begin{prop}
\label{prop:log-Riesz-control-L1-Lp}
Let $s > 0$.
For all $m \in L^1 \cap L^p (\mathbb R^d)$
with $\bigl( 1 - \frac sd \bigr)^{-1} < p \leqslant \infty$,
we have
\[
\bigl\lVert \lvert \cdot \rvert^{-s} \star m \bigr\rVert_{L^\infty}
\lesssim \lVert m \rVert_{L^1}^{1 - qs/d}
\lVert m \rVert_{L^p}^{qs/d}\,,
\]
where $p^{-1} + q^{-1} = 1$.
If additionally, for some $\theta \in (0,1)$, we have
$\bigl(1 - \frac{s+\theta}{d}\bigr)^{-1} < p \leqslant \infty$,
then
\[
\bigl[ \lvert \cdot \rvert^{-s} \star m \bigr]_{\mathcal C^\theta}
\lesssim \lVert m \rVert_{L^1}^{1 - q(s+\theta)/d}
\lVert m \rVert_{L^p}^{q(s+\theta)/d}\,.
\]
\end{prop}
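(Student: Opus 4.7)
The plan is to prove both estimates by the same dyadic splitting of the singular convolution, reducing the Hölder bound to the $L^\infty$ bound applied to a slightly more singular kernel.

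For the $L^\infty$ bound, I would fix $x \in \R^d$ and a radius $R>0$ to be chosen, and split
\[
\lvert \cdot \rvert^{-s} \star m (x)
= \int_{|x-y|\leqslant R} |x-y|^{-s} m(y) \dd y
+ \int_{|x-y|>R} |x-y|^{-s} m(y) \dd y.
\]
The far integral is crudely bounded by $R^{-s} \lVert m\rVert_{L^1}$. For the near one, Hölder's inequality with exponents $p$ and $q$ gives a bound of order $\lVert m\rVert_{L^p} R^{d/q - s}$; the local $L^q$ integrability of $|\cdot|^{-s}$ on $B(0,R)$ is precisely where the hypothesis $p > (1-s/d)^{-1}$, equivalently $sq<d$, is used. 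Balancing the two terms by choosing $R^{d/q} \sim \lVert m\rVert_{L^1}/\lVert m\rVert_{L^p}$ produces the claimed interpolated bound.

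For the Hölder estimate, the key step is the pointwise inequality
\[
\bigl\lvert |a|^{-s} - |b|^{-s}\bigr\rvert
\leqslant C_{s,\theta}\,|a-b|^\theta\,\min(|a|,|b|)^{-s-\theta}
\]
valid for $\theta \in (0,1)$ and nonzero $a$, $b\in\R^d$. This is checked by distinguishing the regime $|a-b|\leqslant \min(|a|,|b|)/2$, where the mean value theorem applied to $r\mapsto r^{-s}$ on the relevant interval (both $|a|$ and $|b|$ being comparable) combined with $|a-b|^{1-\theta} \leqslant \min(|a|,|b|)^{1-\theta}$ yields the bound; and the regime $|a-b|> \min(|a|,|b|)/2$, where $\bigl\lvert|a|^{-s} - |b|^{-s}\bigr\rvert \leqslant 2\min(|a|,|b|)^{-s}$ suffices since the missing factor $\min(|a|,|b|)^\theta$ is then absorbed by $|a-b|^\theta$. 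Setting $a = x-z$ and $b = y-z$, this produces
\[
\bigl\lvert f(x) - f(y)\bigr\rvert
\leqslant C_{s,\theta}\,|x-y|^\theta \int \bigl( |x-z|^{-s-\theta} + |y-z|^{-s-\theta} \bigr) |m(z)| \dd z,
\]
so that the Hölder seminorm is controlled by the $L^\infty$ bound of the first part applied with $s$ replaced by $s+\theta$. This replacement is legitimate exactly under the stronger hypothesis $p>(1-(s+\theta)/d)^{-1}$, matching the statement.

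The main obstacle is getting the kernel inequality above with the sharp dependence on the minimum so that the integrability threshold on $p$ obtained upon reapplication of the $L^\infty$ bound coincides with the one claimed. An alternative route via a Bernstein-type gradient estimate $\lVert \nabla f\rVert_\infty \lesssim \lVert m\rVert_{L^1}^{1-q(s+1)/d}\lVert m\rVert_{L^p}^{q(s+1)/d}$ followed by the interpolation $[f]_{\mathcal C^\theta}\lesssim\lVert f\rVert_\infty^{1-\theta}\lVert\nabla f\rVert_\infty^{\theta}$ would instead require $s+1<d$ and the strictly stronger constraint $p>(1-(s+1)/d)^{-1}$, which is why the pointwise reduction is preferable.
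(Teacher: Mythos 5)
Your proof is correct and complete. The paper does not actually prove Proposition~\ref{prop:log-Riesz-control-L1-Lp}: it defers to Lemma~4.5.4 and Theorem~4.5.10 of H\"ormander's book, ``accompanied with an interpolation''. Your argument supplies a self-contained elementary proof of exactly the statement used. The $L^\infty$ part is the standard near/far splitting: the hypothesis $p>(1-s/d)^{-1}$ is equivalent to $sq<d$, which is precisely the local $L^q$-integrability of $\lvert\cdot\rvert^{-s}$ needed for H\"older's inequality on the ball, and your choice $R^{d/q}\sim\lVert m\rVert_{L^1}/\lVert m\rVert_{L^p}$ does equalize the two terms and produce the claimed interpolated bound (the case $p=\infty$, $q=1$ goes through verbatim). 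The H\"older part hinges on the pointwise inequality $\bigl\lvert\,\lvert a\rvert^{-s}-\lvert b\rvert^{-s}\bigr\rvert\lesssim\lvert a-b\rvert^\theta\min(\lvert a\rvert,\lvert b\rvert)^{-s-\theta}$; both of your regimes check out (mean value theorem plus $\lvert a-b\rvert^{1-\theta}\leqslant\min(\lvert a\rvert,\lvert b\rvert)^{1-\theta}$ in the near-diagonal case, absorption of $\min(\lvert a\rvert,\lvert b\rvert)^{\theta}$ by $(2\lvert a-b\rvert)^{\theta}$ otherwise), and since $\min(\lvert a\rvert,\lvert b\rvert)^{-s-\theta}\leqslant\lvert a\rvert^{-s-\theta}+\lvert b\rvert^{-s-\theta}$, the seminorm reduces to the first estimate with $s$ replaced by $s+\theta$, under exactly the stated stronger condition on $p$. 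Your closing remark about why the gradient-plus-interpolation route would impose the unnecessary constraint $p>(1-(s+1)/d)^{-1}$ is also accurate and explains why the pointwise reduction matches the sharp threshold in the statement. The only cosmetic caveat is that for signed $m$ the splitting should be carried out with $\lvert m\rvert$ throughout, which you implicitly do in the H\"older part and should do in the $L^\infty$ part as well.
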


Then, we present the well-posedness results for the mean field
and the particle system.

\begin{prop}[Well-posedness of the mean field system]
\label{prop:MkV-log-Riesz-wp-reg}
Let Assumption~\ref{assu:MkV-log-Riesz} hold.
Then we have the following results:
\begin{itemize}
\item For each initial value
$m_0 \in L^1 \cap L^\infty \cap \mathcal P(\mathbb R^d)$,
there exists a unique solution to the mean field flow \eqref{eq:MkV-log-Riesz}
in $C\bigl([0,\infty); L^1(\mathbb R^d) \cap \mathcal P\bigr)
\cap L^\infty\bigl([0,\infty); L^\infty(\mathbb R^d)\bigr)$
depending continuously on the initial value.
In particular, we have the time-uniform bound:
\begin{equation}
\label{eq:MkV-log-Riesz-Linfty-bound}
\sup_{t \in [0, \infty)}
\lVert m_t \rVert_{L^\infty}
\leqslant C_1 (U, \lVert m_0 \rVert_{L^\infty} ) < \infty\,.
\end{equation}
\item If additionally the initial value $m_0$ has finite $k$-th moment
for some $k > 0$, then the mean field flow $m_t$ has finite $k$-th moment,
uniformly in time:
\[
\sup_{t \in [0,\infty)}\int_{\R^d} \lvert x \rvert^k m_t(\dd x)
\leqslant C_2
\biggl(U, K, k, \lVert m_0\rVert_{L^\infty},
\int_{\R^d} \lvert x\rvert^k m_0(\dd x)\biggr)
\]
\item Finally, let $K^\varepsilon = K \star \eta^\varepsilon$,
$U^\varepsilon = U \star \eta^\varepsilon$ be the mollified kernel and
confinement. If $m^\varepsilon_0$ converges to $m_0$ in $L^1$
and if $\sup_\varepsilon \lVert m^\varepsilon_0 \rVert_{L^\infty} < \infty$,
then the solution $m^\varepsilon_t$
of the approximate mean field flow
\begin{equation}
\label{eq:MkV-log-Riesz-approx}
\partial_t m^\varepsilon_t = \Delta m^\varepsilon_t
- \nabla \cdot \bigl( m^\varepsilon_t
(K^\varepsilon \star m^\varepsilon_t - \nabla U^\varepsilon) \bigr)
\end{equation}
converges to $m_t$ in $L^1$ for all $t \geqslant 0$.
Moreover, the $L^\infty$ norm and the $k$-th moment bounds above hold
when we replace $m$ by $m^\varepsilon$.
\end{itemize}
\end{prop}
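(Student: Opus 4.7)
The plan is to build the theory on the mollified system \eqref{eq:MkV-log-Riesz-approx}, derive time-uniform a priori bounds whose constants do not depend on $\varepsilon$, and then pass to the limit $\varepsilon \to 0$. For each fixed $\varepsilon > 0$, the kernel $K^\varepsilon$ and drift $\nabla U^\varepsilon$ are smooth with bounded derivatives (using $\nabla^2 U \in L^\infty$), so classical Wasserstein fixed-point arguments give a unique strong solution $m^\varepsilon$ of \eqref{eq:MkV-log-Riesz-approx} with smooth density, and a well-posed associated SDE.

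The core step is the time-uniform $L^\infty$ bound on $m^\varepsilon_t$ independent of $\varepsilon$. Rewriting \eqref{eq:MkV-log-Riesz-approx} in non-divergence form,
\[
\partial_t m^\varepsilon_t = \Delta m^\varepsilon_t - (K^\varepsilon \star m^\varepsilon_t - \nabla U^\varepsilon) \cdot \nabla m^\varepsilon_t - (\nabla \cdot K^\varepsilon \star m^\varepsilon_t - \Delta U^\varepsilon)\, m^\varepsilon_t\,,
\]
the structural assumption on $M$ controls the zero-order coefficient: in the Coulombic and super-Coulombic cases the anti-symmetry of $M$ and the symmetry of $\nabla^2 g^\varepsilon$ force $\nabla \cdot K^\varepsilon = M : \nabla^2 g^\varepsilon = 0$; in the sub-Coulombic case the rotational invariance of $\eta^\varepsilon$ preserves the inequality $M : \nabla^2 g^\varepsilon \geqslant 0$, so $\nabla \cdot K^\varepsilon \star m^\varepsilon_t \geqslant 0$. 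In either case the only surviving zero-order contribution of indefinite sign is $\Delta U^\varepsilon\, m^\varepsilon_t$, uniformly bounded in absolute value by $\lVert\nabla^2 U\rVert_{L^\infty} m^\varepsilon_t$. Testing against $p (m^\varepsilon_t)^{p-1}$ gives an $L^p$ energy inequality in which the dissipation $\lVert \nabla (m^\varepsilon_t)^{p/2} \rVert_{L^2}^2$ can be interpolated through a Gagliardo--Nirenberg--Sobolev inequality against $\lVert m^\varepsilon_t \rVert_{L^1} = 1$, producing a super-linear damping that absorbs the linear term coming from $\Delta U$. An ODE comparison then yields a time-uniform bound on $\lVert m^\varepsilon_t \rVert_{L^p}$, and a Moser-type iteration in $p$ passes to $L^\infty$ to give \eqref{eq:MkV-log-Riesz-Linfty-bound}.

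The moment bound follows from Itô's formula applied to $\psi(x) = (1 + \lvert x \rvert^2)^{k/2}$ along the SDE associated to $m^\varepsilon$. The weak convexity of $U$ yields $-x \cdot \nabla U(x) \leqslant -(\kappa_U / 2) \lvert x \rvert^2 + C_R$ outside a ball, while $\lvert K^\varepsilon \star m^\varepsilon_t(x) \rvert \leqslant \lVert K^\varepsilon \star m^\varepsilon_t \rVert_{L^\infty}$ is controlled by Proposition~\ref{prop:log-Riesz-control-L1-Lp} applied to the kernel $\lvert K \rvert \lesssim \lvert \cdot \rvert^{-(s+1)}$ (using $s + 1 < d$), with inputs $\lVert m^\varepsilon_t \rVert_{L^1} = 1$ and the just-established $L^\infty$ bound. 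This produces the inequality $\frac{\dd}{\dd t} \Expect[\psi(X_t)] \leqslant -c_k \Expect[\psi(X_t)] + C'_k$, uniform in $t$ and $\varepsilon$, hence the stated moment bound.

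To pass to the limit, the uniform $L^\infty$ bound gives weak-$\ast$ compactness and the moment bound provides narrow tightness; the Hölder part of Proposition~\ref{prop:log-Riesz-control-L1-Lp} yields equicontinuity in space for $K^\varepsilon \star m^\varepsilon_t$, while standard parabolic arguments give equicontinuity in time, enough to identify any limit point as a distributional solution of \eqref{eq:MkV-log-Riesz}. Uniqueness within the stated class is obtained from a Gronwall argument on $\lVert m_t - \tilde m_t \rVert_{L^1}$ whose drift-difference term is again handled by Proposition~\ref{prop:log-Riesz-control-L1-Lp} and the uniform $L^\infty$ control; continuous dependence on the initial value and the $L^1$ convergence $m^\varepsilon_t \to m_t$ are byproducts. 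The main obstacle is the time-uniform $L^\infty$ estimate: without the sign condition on $M : \nabla^2 g$, the drift divergence would contribute a zero-order term of indefinite sign whose $L^\infty$ norm scales polynomially in $\lVert m^\varepsilon_t \rVert_{L^\infty}$, and the energy method could at best close on bounded time intervals; the assumption on $M$ is precisely what makes the estimate time-uniform.
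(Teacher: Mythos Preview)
Your overall strategy—mollify, establish $\varepsilon$-uniform a priori bounds, pass to the limit by compactness, then prove uniqueness—is a legitimate alternative to the paper's route, which instead runs a Banach fixed-point directly on the singular equation in $C([0,T];L^1\cap L^p\cap\mathcal P)$, using the linear growth/stability estimates of Proposition~\ref{prop:growth-stability} and closing the nonlinearity via Proposition~\ref{prop:log-Riesz-control-L1-Lp}. Your time-uniform $L^\infty$ bound rests on the same mechanism as the paper's (the sign condition on $M:\nabla^2 g$ bounds $(\nabla\cdot\beta)_-$ by $\lVert\nabla^2U\rVert_{L^\infty}$); the paper goes from $L^2$ to $L^\infty$ by a Nash-plus-duality (ultracontractivity) argument on the adjoint semigroup rather than by Moser iteration, but the two are essentially interchangeable. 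The moment step is the same Lyapunov computation.

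There is, however, a genuine gap in your uniqueness/stability step. A Grönwall argument purely in $L^1$, feeding Proposition~\ref{prop:log-Riesz-control-L1-Lp} with $p=\infty$ and the fixed $L^\infty$ bound, only yields
\[
\lVert K\star(m_v-\tilde m_v)\rVert_{L^\infty}
\lesssim \lVert m_v-\tilde m_v\rVert_{L^1}^{\,1-(s+1)/d},
\]
which is \emph{sublinear} in the $L^1$ distance. The resulting inequality (whether differential or in the Girsanov form $y(t)^2\lesssim\int_0^t y(v)^{2\alpha}\dd v$ with $\alpha<1$) fails the Osgood condition and does not force $y\equiv 0$ from $y(0)=0$; the same defect blocks continuous dependence and the claimed $L^1$ convergence $m^\varepsilon_t\to m_t$. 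The paper avoids this by working in $L^1\cap L^p$ for a finite $p$ with $p^{-1}<1-(s+1)/d$: then $\lVert K\star\delta m\rVert_{L^\infty}\lesssim\lVert\delta m\rVert_{L^1}^{1-\theta}\lVert\delta m\rVert_{L^p}^{\theta}$ is, by Young's inequality, linear in the combined norm, the map contracts on short intervals, and both uniqueness and the approximation statement follow by iteration. Your argument is easily repaired the same way—run the stability estimate in $L^1\cap L^p$ rather than in $L^1$ alone.
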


\begin{prop}[Well-posedness of the particle system]
\label{prop:ps-log-Riesz-wp}
Let Assumption~\ref{assu:MkV-log-Riesz} hold with $s \leqslant d - 2$
and suppose that for all $x \in \mathbb R^d$,
we have $x^\top Mx \leqslant 0$.
Then, for any initial value $\bm X_0 = \bigl( X^1_0, \ldots, X^N_0 \bigr)$
such that $X^i_0 \neq X^j_0$ almost surely for $i \neq j$,
the SDE system \eqref{eq:ps-log-Riesz-SDE} has a global unique strong solution.
Moreover, setting $K^\varepsilon = K \star \eta^\varepsilon$,
$U^\varepsilon = U \star \eta^\varepsilon$,
and considering the approximate SDE system
\begin{equation}
\label{eq:ps-log-Riesz-SDE-approx}
\dd X^{\varepsilon,i}_t
= -\nabla U^\varepsilon\bigl(X^{\varepsilon,i}_t\bigr) \dd t
+ \frac 1{N-1} \sum_{j\in\cco 1,N\ccf\setminus\{i\}}
K^\varepsilon\bigl(X^{\varepsilon,i}_t - X^{\varepsilon,j}_t\bigr) \dd t
+ \sqrt 2 \dd W^i_t\,,
\end{equation}
for $i \in \cco 1, N \ccf$,
with the initial condition $X^{\varepsilon,i}_0 = X^i_0$,
we have, for all $t \geqslant 0$ and $i = 1$, \dots, $N$,
\[
X^{\varepsilon,i}_t \to X^i_t~\text{a.s.,}\qquad
\text{when}~\varepsilon \to 0\,.
\]
\end{prop}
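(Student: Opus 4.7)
The plan is to work first with the mollified system \eqref{eq:ps-log-Riesz-SDE-approx}, derive a uniform-in-$\varepsilon$ bound on a Lyapunov functional that penalizes pairwise collisions, and then pass to the singular limit by pathwise stability under a shared Brownian driver. For each fixed $\varepsilon > 0$, the drift of \eqref{eq:ps-log-Riesz-SDE-approx} is globally Lipschitz since $g^\varepsilon = g \star \eta^\varepsilon$ has bounded derivatives of every order and $\nabla U$ is Lipschitz by Assumption~\ref{assu:MkV-log-Riesz}. Classical SDE theory thus provides a unique strong global solution $\mathbf X^\varepsilon$.

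The central step is to control, uniformly in $\varepsilon$,
\[
\Phi^\varepsilon(\mathbf x) \coloneqq \sum_{i \neq j} g^\varepsilon(x^i - x^j)\,,
\]
which blows up as soon as two coordinates collide. The gradient of $\Phi^\varepsilon$ with respect to $x^i$ is $2 v_i^\varepsilon(\mathbf x) = 2 \sum_{j \neq i} \nabla g^\varepsilon(x^i - x^j)$, and Itô's formula applied to $\Phi^\varepsilon(\mathbf X^\varepsilon_t)$ produces three contributions. First, the confinement $-2\sum_i v_i^\varepsilon \cdot \nabla U^\varepsilon(X^{\varepsilon,i})$; after antisymmetrizing the double sum using $\nabla g^\varepsilon(-x) = -\nabla g^\varepsilon(x)$, this rewrites as $-\sum_{i \neq j} \nabla g^\varepsilon(\xi_{ij}) \cdot (\nabla U^\varepsilon(X^{\varepsilon,i}) - \nabla U^\varepsilon(X^{\varepsilon,j}))$ with $\xi_{ij} = X^{\varepsilon,i} - X^{\varepsilon,j}$, and is then bounded in absolute value by $C(1 + \Phi^\varepsilon)$ via the Lipschitz property of $\nabla U$ and the pointwise estimate $|\nabla g_s(x)||x| \lesssim 1 + g_s(x)$. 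Second, the Laplacian $2\sum_{i \neq j} \Delta g^\varepsilon(\xi_{ij})$, which is nonpositive because $\Delta g_s \leqslant 0$ for $s \leqslant d - 2$ and convolution with the radial $\eta^\varepsilon$ preserves the sign. Third, and crucially, the interaction contribution: using $K^\varepsilon = M \nabla g^\varepsilon$, a direct computation collapses the two- and three-body terms into
\[
\frac{2}{N-1} \sum_{i=1}^N \bigl(v_i^\varepsilon\bigr)^{\!\top} M\, v_i^\varepsilon\,,
\]
which is nonpositive by the hypothesis $x^\top M x \leqslant 0$. This algebraic cancellation, replacing what would otherwise have been a delicate three-body bound, is the main technical point. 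Gronwall's lemma then yields $\sup_\varepsilon \Expect \Phi^\varepsilon(\mathbf X^\varepsilon_T) \leqslant C(T)$ for every $T > 0$.

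With the uniform Lyapunov bound secured, the passage to the limit is routine. Coupling $\mathbf X^\varepsilon$ and $\mathbf X^{\varepsilon'}$ through the same Brownian motion and setting $\tau^{\varepsilon,\varepsilon'}_\delta$ to be the first time any pair in either configuration comes within distance $\delta$, Markov's inequality applied to $\Phi^\varepsilon$ yields $\sup_{\varepsilon,\varepsilon'} \Proba(\tau^{\varepsilon,\varepsilon'}_\delta \leqslant T) \to 0$ as $\delta \to 0$. On $[0, T \wedge \tau^{\varepsilon,\varepsilon'}_\delta]$ the kernels $K^\varepsilon$ and $K^{\varepsilon'}$ converge to $K$ uniformly on the relevant region and both mollified drifts are Lipschitz with a constant depending only on $\delta$, so a pathwise Gronwall comparison on $|\mathbf X^\varepsilon_t - \mathbf X^{\varepsilon'}_t|^2$ yields Cauchy convergence in probability, and hence almost sure convergence along any subsequence $\varepsilon_n \to 0$. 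The limit $\mathbf X$ is a strong solution of \eqref{eq:ps-log-Riesz-SDE} that almost surely avoids the collision set, and pathwise uniqueness follows from the same Gronwall estimate.
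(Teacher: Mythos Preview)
Your argument is essentially correct and rests on the same Lyapunov functional and the same key algebraic identity as the paper: the interaction contribution to the It\^o differential of $\sum_{i\neq j} g^\varepsilon(x^i-x^j)$ collapses to $\frac{2}{N-1}\sum_i (v_i^\varepsilon)^\top M v_i^\varepsilon$, which is nonpositive under $x^\top M x \leqslant 0$. The organization differs, however. The paper builds the singular solution $\mathbf X$ first, via local Lipschitz theory stopped at $\tau_n = \inf\{t : \min_{i\neq j}|X^i_t - X^j_t| \leqslant 1/n\}$, bounds the energy of $\mathbf X_{t\wedge\tau_n}$ to force $\tau_n \to \infty$ a.s., and only then compares $\mathbf X^\varepsilon$ to $\mathbf X$ pathwise on the events $\{\tau_n > T\}$ (using an auxiliary globally Lipschitz truncation $\tilde K_n$). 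You instead carry out the Lyapunov estimate entirely on the mollified system and construct the singular limit afterwards. Your route is natural but slightly less direct for the almost-sure statement.

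Two points need repair. First, for $s=0$ the functional $\Phi^\varepsilon = \sum_{i\neq j} g_0^\varepsilon(x^i-x^j)$ is unbounded below, since $g_0 = -\ln|\cdot|$; a bound on $\mathbb E\,\Phi^\varepsilon$ alone does not control collision probabilities via Markov, because distant pairs can contribute arbitrarily large negative mass. The paper fixes this by adding $\frac{N}{2}\sum_i |x^i|^2$ to the energy in the logarithmic case; equivalently you can invoke a separate uniform second-moment bound, which is easy from the confinement since the interaction contribution to $\frac{\dd}{\dd t}\sum_i|X^{\varepsilon,i}_t|^2$ is bounded when $s=0$. Second, Cauchy convergence in probability does \emph{not} give ``almost sure convergence along any subsequence $\varepsilon_n \to 0$'', only along a further subsequence. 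To recover the full a.s.\ claim you should, once the limit $\mathbf X$ is constructed and shown to avoid collisions, compare $\mathbf X^\varepsilon$ directly to $\mathbf X$ by pathwise Gr\"onwall on the events $\{\tau_n(\mathbf X) > T\}$, whose union has full measure; this is precisely the paper's second step.
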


These results may be considered mathematical folklore
and we do not claim originality from them.
Their proofs are postponed to Appendix~\ref{app:log-Riesz-wp}.

\subsection{Uniform Lipschitz and Hessian bounds, and LSI}

We introduce the invariant measure $\mu_0$ of the reversible diffusion
generated by $\Delta - \nabla U \cdot \nabla$,
whose density is explicit:
\[
\mu_0(x) = Z(\mu_0)^{-1} \exp \bigl( - U(x) \bigr)\,,
\quad Z(\mu_0) = \int_{\R^d} \exp \bigl( -U(x) \bigr) \dd x\,.
\]
Note that, under Assumption~\ref{assu:MkV-log-Riesz},
using the HJB flow method of Conforti
(see Theorem 1.3 and Remark 1.7 of \cite{ConfortiWeakSemiconvexity}),
we can show that the measure $\mu_0$ is the image of a Gaussian measure
under a transport mapping with an explicit Lipschitz constant,
and thus satisfies an LSI with an explicit constant.

We use the following result on the Lipschitz and Hessian bounds
on the solution to a class of HJB equations.

\begin{thm}
\label{thm:HJB-global-lip-hess}
Let $T > 0$.
Let $u \in \mathcal C^{1,2}_\textnormal{p}
([0,T] \times \mathbb R^d; \mathbb R)$
be a classical solution to the HJB equation
\[
\partial_t u_t = \Delta u_t - \lvert \nabla u_t \rvert^2
+ \tilde b_t \cdot \nabla u_t + \varphi_t\,,
\]
for some $\tilde b \in \mathcal C^{0,2}_\textnormal{p}
([0,T] \times \mathbb R^d; \mathbb R^d)$,
$\varphi \in \mathcal C^{0,2}_\textnormal{p}
([0,T] \times \mathbb R^d; \mathbb R)$.
Suppose the initial condition $u_0 \in \mathcal C^3_\textnormal{Lip}
(\mathbb R^d; \mathbb R)$.
Suppose the drift $\tilde b$ satisfies the weak convexity condition
\[
\bigl(\tilde b_t(x) - \tilde b_t(y), x - y\bigr)
\leqslant \kappa_{\tilde b}(\lvert x - y\rvert) \lvert x - y\rvert
\]
for some $\mathcal C^1$-continuous
$\kappa_{\tilde b} : (0,\infty) \to \mathbb R$
such that $\int_0^1 r \bigl( \kappa_{\tilde b}(r) \vee 0 \bigr) \dd r < \infty$
and $\liminf_{r \to \infty} \kappa_{\tilde b}(r) < 0$.
Suppose
$\sup_{t \in [0,T]}\lVert \nabla \tilde{b}_t \rVert_{L^\infty} < \infty$.
Then, we have the following results:
\begin{itemize}
\item
If $\varphi_t \in L^\infty$ for all $t \in [0,T]$,
then, we have, for all $t \in [0,T]$,
\begin{equation}
\label{eq:HJB-global-lip}
\lVert \nabla u_t \rVert_{L^\infty}
\leqslant C e^{-c t} \lVert \nabla u_0 \rVert_{L^\infty}
+ \int_0^t \frac{C e^{-cv}}{\sqrt{v \wedge 1}}
\lVert \varphi_{t-v} \rVert_{L^\infty} \dd v\,,
\end{equation}
where $C$, $c > 0$ and depend only on $\kappa_{\tilde b}$.
\item
If additionally, $\nabla \varphi_t \in L^\infty$ for all $t \in [0,T]$,
then we have, for all $t \in [0,T]$,
\begin{multline}
\label{eq:HJB-global-hess}
\lVert \nabla^2 u_t \rVert_{L^\infty}
\leqslant \frac{C'e^{-c't}}{\sqrt{ t \wedge 1}}
\lVert \nabla u_0 \rVert_{L^\infty} \\
+ \int_0^t \frac{C' e^{-c'v}}{\sqrt{v \wedge 1}}
\bigl( \lVert \nabla \varphi_{t-v} \rVert_{L^\infty}
+ \lVert \nabla \tilde b_{t-v} \cdot \nabla u_{t-v} \rVert_{L^\infty} \bigr)
\dd v\,,
\end{multline}
where $C'$, $c' > 0$ and depend only on $\kappa_{\tilde b}$,
$\lVert \nabla u_0 \rVert_{L^\infty}$
and $\sup_{t \in [0,T]} \lVert \varphi_t \rVert_{L^\infty}$.
\end{itemize}
\end{thm}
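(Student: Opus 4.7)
The plan is to combine the Cole--Hopf (exponential) transform with the reflection coupling technique of Eberle and Conforti \cite{ConfortiCouplReflControlDiffusion}. Setting $h_t = e^{-u_t}$ linearizes the HJB equation into $\partial_t h_t = \Delta h_t + \tilde b_t \cdot \na h_t - \varphi_t h_t$, which admits the Feynman--Kac representation and, equivalently, the stochastic control representation
\[
u_t(x) = \inf_{\alpha} \Expect \biggl[ u_0\bigl(X^{\alpha,x}_t\bigr) + \int_0^t \bigl( \lvert \alpha_s\rvert^2 + \varphi_{t-s}\bigl(X^{\alpha,x}_s\bigr) \bigr) \dd s \biggr]\,,
\]
where $\dd X^{\alpha,x}_s = \bigl( \tilde b_{t-s}(X^{\alpha,x}_s) + 2\alpha_s \bigr) \dd s + \sqrt 2\, \dd B_s$ with $X^{\alpha,x}_0 = x$ and the infimum runs over admissible controls. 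The identification of this value function with $u_t$ follows from \cite{FlemingSoner} exactly as in the proof of Theorem~\ref{thm:perturbation}.

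For \eqref{eq:HJB-global-lip} I would fix $x$, $y \in \R^d$, pick a near-optimal control $\alpha$ for $u_t(y)$, and drive two copies of the controlled SDE starting from $x$ and $y$ under the same $\alpha$ by an Eberle reflection coupling. Because $\alpha$ cancels in the difference, only $\tilde b$ acts on the distance process; the weak convexity assumption on $\kappa_{\tilde b}$, together with the integrability near $0$ and the asymptotic negativity, furnishes a concave Lyapunov function $f$ which yields both the $W_1$-contraction $\Expect \lvert X^{\alpha,x}_s - X^{\alpha,y}_s \rvert \leqslant C e^{-cs} \lvert x - y \rvert$ and the coupling-time tail estimate $\Proba(\tau > s) \leqslant C e^{-cs} \lvert x - y\rvert / \sqrt{s \wedge 1}$, the time-singular factor coming from the standard entrance-law analysis of the one-dimensional reflected distance. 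Subtracting the two value-function inequalities, controlling $u_0$'s contribution by the $W_1$-contraction and $\lvert \varphi_{t-s}(X^{\alpha,x}_s) - \varphi_{t-s}(X^{\alpha,y}_s)\rvert \leqslant 2 \lVert \varphi_{t-s}\rVert_{L^\infty} \1_{\tau > s}$ using the tail estimate, then dividing by $\lvert x - y\rvert$ and sending the optimality gap to zero, produces exactly \eqref{eq:HJB-global-lip}.

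For \eqref{eq:HJB-global-hess} I would differentiate the HJB in space: $w \coloneqq \na u$ solves the \emph{linear} parabolic equation $\partial_t w = \Delta w + (\tilde b - 2 \na u) \cdot \na w + (\na \tilde b)^\top \na u + \na \varphi$. Since the previous step already yields a uniform bound on $\lVert \na u \rVert_{L^\infty}$, the modified drift $\tilde b - 2 \na u$ still enjoys the asymptotic weak convexity inherited from $\tilde b$, so I can reapply the reflection-coupling machinery to $w$ with initial datum $\na u_0$ and source $(\na \tilde b)^\top \na u + \na \varphi$. The prefactor $1/\sqrt{t \wedge 1}$ in front of $\lVert \na u_0 \rVert_{L^\infty}$ arises from the Bismut--Elworthy--Li smoothing that bounds the spatial derivative of the semigroup acting on a merely Lipschitz datum.

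The main obstacle is the time-singular coupling bound $\Proba(\tau > s) \lesssim e^{-cs} \lvert x - y\rvert / \sqrt{s \wedge 1}$ with constants depending \emph{only} on $\kappa_{\tilde b}$: making it uniform in $T$, in $\lVert \na \tilde b\rVert_{L^\infty}$ and in $\lVert \varphi\rVert_{L^\infty}$ is the delicate part, and is precisely where the Lyapunov construction of Eberle--Conforti is essential. A secondary subtlety is to justify rigorously that the control representation and the subsequent spatial differentiation are licit at the level of the classical $\mathcal C^{1,2}_{\textnormal{p}}$ solutions postulated in the statement; this should follow from standard parabolic regularity combined with the $L^\infty$ bounds already produced.
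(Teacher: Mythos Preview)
Your proposal is correct and follows essentially the same route as the paper: stochastic control representation of $u_t$, reflection coupling of two controlled trajectories sharing the same control to extract the Lipschitz bound from the coupling-time tail $\Proba(\tau>s)\lesssim e^{-cs}\lvert x-y\rvert/\sqrt{s\wedge 1}$ (the paper credits Priola--Wang for the short-time factor and Eberle for the exponential decay), then spatial differentiation and a second reflection-coupling pass on the linear equation for $\nabla u$ with drift $\tilde b-2\nabla u$. The only cosmetic difference is that the paper derives the $1/\sqrt{t\wedge 1}$ prefactor in the Hessian estimate by reusing the same coupling-time bound on the bounded initial datum $\nabla u_0$ in the Feynman--Kac formula for $\nabla u_t$, rather than invoking Bismut--Elworthy--Li, though the two mechanisms give the same rate here.
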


The theorem is only an enhancement to the result of Conforti
\cite{ConfortiCouplReflControlDiffusion}
by using the short-time gradient estimates obtained
by Priola and Wang \cite{PriolaWangGradientEstimate},
and by Porretta and Priola \cite{PorrettaPriolaGlobalLip}.
Thus we only provide a sketch of proof here.

\begin{proof}[Sketch of proof of Theorem~\ref{thm:HJB-global-lip-hess}]
The stated results differ from the main result of
\cite{ConfortiCouplReflControlDiffusion}, i.e.\ Theorem~1.3 of it,
only in two aspects:
first, we work in a time-non-homogeneous setting;
and second, the uniform gradient estimate that we utilize in the proof has
explosion $t^{-1/2}$ instead of $t^{-1}$ when $t \to 0$.

Following the method in the proof of Theorem~\ref{thm:perturbation}
(and ignoring technical issues
about the correspondance to stochastic control problems),
for every $x$, $y \in \R^d$ and $t \in [0,T]$, we can find
stochastic processes $X^{\alpha,x}_\cdot$, $X^{\alpha,y}_\cdot$, $\alpha$,
all defined on $[0,t]$ and taking values in $\R^d$, such that
\begin{align*}
X^{\alpha,z}_0 = z\,,\qquad \dd X^{\alpha,z}_v
&= \Bigl( \tilde b\bigl(X^{\alpha,z}_v\bigr) + 2\alpha_v \Bigr) \dd v
+ \sqrt 2 \dd B^z_v\,,\qquad\text{for}~z = x,y\,, \\
u(t,x) &= \Expect \biggl[
\int_0^t \Bigl( \lvert \alpha_v\rvert^2
+ \varphi_{t-v}\bigl(X^{\alpha,x}_v\bigr) \biggr) \dd v
+ u\bigl(0, X^{\alpha,x}_t\bigr)\biggr]\,, \\
u(t,y) &\leqslant \Expect \biggl[
\int_0^t \Bigl( \lvert \alpha_v\rvert^2
+ \varphi_{t-v}\bigl(X^{\alpha,y}_v\bigr) \biggr) \dd v
+ u\bigl(0, X^{\alpha,y}_t\bigr)\biggr]\,,
\end{align*}
where $B^x$, $B^y$ are Brownian motions coupled by reflection
until $X^{\alpha,x}$, $X^{\alpha,y}$ collide:
\[
\dd B^{\alpha, y}_v = \biggl( 1 -
\frac{2\bigl(X^{\alpha,y}_v - X^{\alpha,x}_v\bigr)
\bigl(X^{\alpha,y}_v - X^{\alpha,x}_v\bigr)^\mathsf{T}}
{\bigl|X^{\alpha,y}_v - X^{\alpha,x}_v\bigr|^2} \biggr)\dd B^{\alpha, x}_v\,,
\]
for $v \leqslant \tau \coloneqq
\inf \bigl\{ w \geqslant 0 : X^{\alpha,x}_w = X^{\alpha,y}_w\bigr\}$,
and $\dd B^{\alpha,x}_v = \dd B^{\alpha,y}_v$ for $v > \tau$.
Then, by subtracting the dynamics of $X^{\alpha,x}$ and $X^{\alpha,y}$,
we find that their difference process
$\bigl| X^{\alpha,x}_\cdot - X^{\alpha,y}_\cdot \bigr|$
is stochastically dominated by
a one-dimensional It\=o process $(r_t)_{t \geqslant 0}$ solving
\[
\dd r_v = - r_v \kappa_{\tilde b} (r_v) \dd v + 2\sqrt 2\dd W_v
\]
with an absorbing boundary at $0$
with initial value $r_0 = \lvert x - y\rvert$.
It is shown in \cite{PriolaWangGradientEstimate} that
\begin{align*}
\Proba [ r_v > 0 ] &\leqslant \frac{C r_0}{\sqrt{v \wedge 1}}
\intertext{for some $C$ depending only on $\kappa_{\tilde b}$.
Then, by combining the result above with the long-time
Wasserstein contraction studied in \cite{EberleReflectionCoupling}, we get,
for all $v \in [0,t]$,}
\Proba [ r_v > 0 ] &\leqslant \frac{C' e^{-c'v}r_0}{\sqrt{v \wedge 1}}
\end{align*}
for some $C'$, $c' > 0$ depending only on $\kappa_{\tilde b}$.
Therefore, by subtracting the stochastic representation
for $u(t,x)$, $u(t,y)$ and applying the bound above on $r_v$,
we get the first claim.

For the second-order estimate, we take spatial derivatives in the HJB
and find that $\nabla u_t$ solves the $\R^d$-valued equation
\[
\partial_t \nabla u_t
= \Delta \nabla u_t + (\tilde b_t - 2 \nabla u_t) \cdot \nabla^2 u_t
+ \nabla \tilde b_t \cdot \nabla u_t + \nabla \varphi_t\,.
\]
Thus, $\nabla u_t$ solves a second-order equation with
the weakly semi-monotone drift term $\tilde b_t - 2\nabla u_t$
(as $\tilde b_t$ is weakly semi-monotone and $\nabla u_t$ is bounded
by the first claim),
and a bounded source term $\nabla \tilde b_t \cdot \nabla u_t+\nabla \varphi_t$.
Writing the Feynman--Kac formula for $\nabla u_t$
and using the coupling by reflection as above, we get the second claim.
We refer readers to \cite{ConfortiCouplReflControlDiffusion}
for a rigorous justification of the Hessian bound.
\end{proof}

\begin{thm}
\label{thm:MkV-log-Riesz-unif-LSI}
Let Assumption~\ref{assu:MkV-log-Riesz} hold.
Let $m_0 \in \mathcal P(\mathbb R^d)$ be such that
\[
u_0 \coloneqq  - \ln \frac{\dd m_0}{\dd \mu_0}
= - \ln m_0 - U - \ln Z(\mu_0)
\]
is Lipschitz continuous
and let $(m_t)_{t \geqslant 0}$ be the solution to \eqref{eq:MkV-log-Riesz}.
Denote $u_t \coloneqq - \ln \dd m_t /\! \dd \mu_0$.
Then we have, for all $t > 0$,
\[
\sup_{x\in\mathbb R^d}
\bigl|K \star m_t(x) (1+\lvert x\rvert)\bigr| \leqslant C\,,\quad
\lVert \nabla u_t \rVert_{L^\infty} \leqslant C\,,
\quad \lVert \nabla^2 u_t \rVert_{L^\infty} \leqslant
\frac{C}{\sqrt{t \wedge 1}}
\]
for some $C$ depending only on $d$, $s$, $U$, $\lvert M\rvert$,
$\lVert m_0 \rVert_{L^\infty}$ and $\lVert \nabla u_0 \rVert_{L^\infty}$.
Moreover, when $\lvert M\rvert$ increases
and all other dependencies are kept constant, $C$ increases.
Consequently, the flow $(m_t)_{t \geqslant 0}$ satisfies
a uniform LSI whose constant has the same dependency as above
and is increasing in $\lvert M\rvert$.
\end{thm}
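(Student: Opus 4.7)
The plan is to derive an HJB equation for $u_t = -\ln(\dd m_t /\! \dd \mu_0)$, apply Theorem~\ref{thm:HJB-global-lip-hess} to obtain the Lipschitz and Hessian bounds, and then invoke the Aida--Shigekawa perturbation lemma to deduce the LSI. Throughout, the singularity of the kernel $K = M\nabla g_s$ will be handled by first working with the mollified system \eqref{eq:MkV-log-Riesz-approx}, extracting uniform-in-$\varepsilon$ estimates, and passing to the limit via the stability statement of Proposition~\ref{prop:MkV-log-Riesz-wp-reg}.

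A direct computation starting from $m_t = \mu_0 e^{-u_t}$ and \eqref{eq:MkV-log-Riesz} yields
\[
\partial_t u_t = \Delta u_t - \lvert \nabla u_t \rvert^2 + \tilde b_t \cdot \nabla u_t + \varphi_t\,,
\]
with $\tilde b_t = -\nabla U - K \star m_t$ and $\varphi_t = \nabla \cdot (K \star m_t) - \nabla U \cdot K \star m_t$. The first key step is the weighted bound $\sup_x (1+\lvert x\rvert)\lvert K \star m_t(x)\rvert \leqslant C$. The pointwise $L^\infty$ part follows from Proposition~\ref{prop:log-Riesz-control-L1-Lp} applied to $\lvert K\rvert \lesssim \lvert\cdot\rvert^{-s-1}$ (admissible since $s+1<d$, taking $p=\infty$) together with the uniform $L^\infty$ bound on $m_t$ from Proposition~\ref{prop:MkV-log-Riesz-wp-reg}; the decay at infinity is obtained by splitting the convolution at $\lvert y\rvert = \lvert x\rvert/2$, estimating the inner part directly (giving $O(\lvert x\rvert^{-s-1})$) and estimating the outer part via Proposition~\ref{prop:log-Riesz-control-L1-Lp} applied to $m_t \1_{\lvert y\rvert > \lvert x\rvert/2}$, whose mass decays polynomially by the uniform moment bounds of Proposition~\ref{prop:MkV-log-Riesz-wp-reg}.

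With this bound in hand, I verify the hypotheses of Theorem~\ref{thm:HJB-global-lip-hess}: the weak convexity of $\tilde b_t$ comes from that of $-\nabla U$ combined with the Hölder estimate on $K\star m_t$ provided by the second bound of Proposition~\ref{prop:log-Riesz-control-L1-Lp} (admissible for some small $\theta>0$ since $s+1+\theta<d$), which furnishes a modulus $\kappa_{\tilde b}$ integrable near zero and eventually negative; the $L^\infty$ bound on $\varphi_t$ uses the antisymmetry of $M$ in the Coulombic and super-Coulombic cases (so $\nabla\cdot(K\star m_t)\equiv 0$) or the sub-Coulombic assumption $s<d-2$ (so that $M:\nabla^2 g\star m_t$ is bounded by Proposition~\ref{prop:log-Riesz-control-L1-Lp} with exponent $s+2<d$), combined with the linear growth of $\nabla U$ paired with the $1/(1+\lvert x\rvert)$ decay of $K\star m_t$. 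The first assertion of Theorem~\ref{thm:HJB-global-lip-hess} then delivers the uniform Lipschitz bound $\lVert\nabla u_t\rVert_{L^\infty}\leqslant C$, and the second assertion, applied to the mollified dynamics where $\nabla\tilde b^\varepsilon$ is bounded so that $\nabla\tilde b^\varepsilon\cdot\nabla u^\varepsilon$ can be controlled uniformly in $\varepsilon$, yields the Hessian bound $\lVert\nabla^2 u_t\rVert_{L^\infty}\leqslant C/\sqrt{t\wedge 1}$ after passing to the limit $\varepsilon \to 0$.

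Since $m_t = e^{-u_t}\mu_0/Z$ with $u_t$ uniformly $C$-Lipschitz and $\mu_0$ satisfying an LSI (by Conforti's transport mapping argument recalled in the paper), the Aida--Shigekawa perturbation lemma yields a uniform LSI for $m_t$ whose constant depends only on the LSI constant of $\mu_0$ and $\lVert\nabla u_t\rVert_{L^\infty}$; the monotonicity in $\lvert M\rvert$ can be tracked through the explicit dependence of each bound on $\lvert M\rvert$. I expect the main obstacle to be the Hessian bound in the Coulombic and super-Coulombic regime: the term $\nabla\tilde b_t\cdot\nabla u_t$ involves the Riesz-type singular integral $M\nabla^2 g\star m_t$ which is not in $L^\infty$ for generic bounded $m_t$, so the uniform-in-$\varepsilon$ control must come from a careful interplay between the antisymmetric structure of $M$, the already-established boundedness of $\nabla u_t$, and the stability provided by Proposition~\ref{prop:MkV-log-Riesz-wp-reg}.
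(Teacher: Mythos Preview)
Your overall strategy coincides with the paper's: mollify, derive the HJB for $u^\varepsilon_t$, bound $\varphi^\varepsilon_t$ via the weighted estimate on $K\star m^\varepsilon_t$, invoke Theorem~\ref{thm:HJB-global-lip-hess} for the gradient and then the Hessian, and conclude with Aida--Shigekawa. The gradient step and the LSI step are essentially as in the paper (minor sign slip in $\varphi_t$ aside, and the paper uses only the $L^\infty$ bound on $K\star m_t$ rather than a H\"older modulus for the weak convexity of $\tilde b_t$, but your variant works too).

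The Hessian step, however, has a genuine gap that you yourself flag. The term $\nabla\tilde b^\varepsilon_t$ contains $\nabla(K^\varepsilon\star m^\varepsilon_t)$, and if you read this as $(\nabla K^\varepsilon)\star m^\varepsilon_t = M\nabla^2 g^\varepsilon\star m^\varepsilon_t$ you indeed face a bound that blows up as $\varepsilon\to 0$ in the (super-)Coulombic range. The paper's resolution is not subtle cancellation from the antisymmetry of $M$; it is simply to shift the derivative onto the measure:
\[
\nabla(K^\varepsilon\star m^\varepsilon_t)=K^\varepsilon\star\nabla m^\varepsilon_t\,,\qquad
\nabla m^\varepsilon_t = -m^\varepsilon_t\bigl(\nabla U^\varepsilon+\nabla u^\varepsilon_t\bigr)\,.
\]
Once you have the uniform Lipschitz bound $\lVert\nabla u^\varepsilon_t\rVert_{L^\infty}\leqslant C$ from the first application of Theorem~\ref{thm:HJB-global-lip-hess}, the right-hand side lies in $L^1\cap L^\infty$ uniformly in $t$ and $\varepsilon$ (using the moment bounds and the $L^\infty$ bound from Proposition~\ref{prop:MkV-log-Riesz-wp-reg}), and Proposition~\ref{prop:log-Riesz-control-L1-Lp} applied with exponent $s+1<d$ gives $\lVert K^\varepsilon\star\nabla m^\varepsilon_t\rVert_{L^\infty}$ bounded uniformly. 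This controls $\lVert\nabla\tilde b^\varepsilon_t\cdot\nabla u^\varepsilon_t\rVert_{L^\infty}$. The same shifting handles the term $\nabla(K^\varepsilon\star m^\varepsilon_t)\cdot\nabla U^\varepsilon$ inside $\nabla\varphi^\varepsilon_t$: one proves $(K\star\nabla m^\varepsilon_t)(x)\lesssim (1+\lvert x\rvert)^{-1}$ by the same near/far splitting you used for $K\star m_t$, now using moment bounds on $\nabla m^\varepsilon_t$ (which reduce to moment bounds on $m^\varepsilon_t$ via the linear growth of $\nabla U+\nabla u^\varepsilon_t$). The antisymmetry of $M$ is used only to kill $\nabla^2\!\cdot(K^\varepsilon\star m^\varepsilon_t)$ in the (super-)Coulombic case; it plays no role in bounding $K\star\nabla m_t$.
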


The proof of Theorem~\ref{thm:MkV-log-Riesz-unif-LSI}
is postponed to Section~\ref{sec:MkV-log-Riesz-bounds}.

\begin{rem}[Modulated free energy and LSI, and kinetic case]
We remark that since we have obtained
the $L^\infty$ bound of $\nabla^2 u_t$ in the theorem above
(and also in Theorem~\ref{thm:MkV-log-Riesz-quadratic-convergence} below),
we can control the Lipschitz norm of the time-dependent vector field
\begin{equation}
    \label{eq:modulated}
    x \mapsto \sigma^2 \nabla \ln \frac{m_t(x)}{e^{-U(x)}} - K \star m_t(x).
\end{equation}
The control of this quantity, as remarked in \cite[Section 1.2]{CdCRSUniform},
is crucial for the modulated free energy method
since it appears in the ``commutator estimates''.
See e.g.\ \cite[Proposition 1.1]{SerfatyME}
or \cite[Proposition 2.13]{CdCRSUniform}.
We note that unfortunately our method to obtain this control
exploits the long-time contractivity of Brownian motions coupled by reflection,
and relies fundamentally on the diffusivity of the dynamics,
so it is not useful for deterministic dynamics (i.e.\ $\sigma = 0$)
considered originally in \cite{SerfatyME}.
Nevertheless, since similar results for the kinetic case
in the time-homogeneous setting have been established by two of the authors
in \cite[Theorems 2 and 13]{lsihe}, our method provides bounds on $\nabla^2 \ln m_t$,
which is of interest in the perspective of applying the arguments \cite[Theorem 2]{JabinWang} in such hypoelliptic cases.

Besides, together with the control of the Lipschitz norm of \eqref{eq:modulated}, a key ingredient to get uniform-in-time estimates when using modulated free energy instead of relative entropy is the modulated log-Sobolev inequalities discussed in \cite{RosenzweigSerfatyMLSI}. These modulated LSI are in fact classical LSI satisfied uniformly over a specific family of measures (called the modulated Gibbs measures, and distinct from the law $m_t$ that we consider in Theorem~\ref{thm:MkV-log-Riesz-unif-LSI}; but a similar time-uniformity is required). The arguments of the time-uniform LSI of Theorem~\ref{thm:MkV-log-Riesz-unif-LSI} may thus be useful to establish time-uniform modulated LSI (although additional difficulties appear in the latter case, in particular a uniformity in the number of particles is required).  On the topic of modulated free energy and modulated LSI, we   mention that an upcoming work \cite{HRSunpublished} is announced in \cite{CdCRSUniform}.
\end{rem}

\begin{rem}[Non-conservative flow and more singularity]
Two natural extensions to the setting considered
in Assumption~\ref{assu:MkV-log-Riesz}
are to consider a not necessarily anti-symmetric $M$
(notably $M = - I_{d \times d}$ which corresponds to the gradient flow)
and a more singular interaction with $s \in [d - 1, d)$.
We note that in the first case,
a not anti-symmetric $M$ poses challenges in mathematical analysis
since the divergence term
\[
\nabla \cdot (K \star m_t) = M : \nabla^2 g \star m_t
\]
appears and is more singular than the flow $m_t$ itself when
$s \geqslant d - 2$.
By re-examining the proof, we find that
the method of Theorem~\ref{thm:MkV-log-Riesz-unif-LSI} will continue to work
if $(m_t)_{t \geqslant 0}$ satisfies a uniform $\theta$-Hölder bound
for some $\theta > s - d + 3$ without the anti-symmetry of $M$,
or for some $\theta > s - d + 2$ with an anti-symmetric $M$.
The authors are unfortunately unaware of such results for
Riesz flows with confinement in the whole space,
which are possibly worthy of independent studies in the future.
\end{rem}

\subsection{Global PoC for log interaction with general confinement potential}

As a consequence of Theorem~\ref{thm:MkV-log-Riesz-unif-LSI},
we get the strong uniform-in-time propagation of chaos result.

\begin{thm}
\label{thm:log-Riesz-global-POC}
Let Assumption~\ref{assu:MkV-log-Riesz} hold
and suppose additionally that $s = 0$ and $M$ is anti-symmetric.
Let $(m_t)_{t \geqslant 0}$ be a solution
to \eqref{eq:MkV-log-Riesz} whose initial value
$m_0$ satisfies the conditions of Theorem~\ref{thm:MkV-log-Riesz-unif-LSI}
and let $(m^N_t)_{t \geqslant 0}$ be a solution to \eqref{eq:ps-log-Riesz}.
Then, there exist $C$, $C'$, $\rho > 0$, depending only on
$d$, $U$, $\lvert M\rvert$ and $m_0$, such that
\begin{equation}
\label{eq:log-Riesz-global-POC}
\mathcal H \bigl( m^N_t \big| m_t^{\otimes N} \bigr)
\leqslant
C \exp \bigl( - (\rho - C' ) t \bigr)
\mathcal H \bigl( m^N_0 \big| m_0^{\otimes N} \bigr)
+ C \Bigl( 1 + \exp \bigl( - (\rho - C' ) t \bigr) \Bigr)
\end{equation}
for all $t \geqslant 0$, once
$\mathcal H \bigl( m^N_0 \big| m_0^{\otimes N} \bigr) < \infty$.
Moreover, when $\lvert M\rvert$ decreases
and all other dependencies are kept constant,
$C$ decreases, $\rho$ increases,
and $\lim_{\lvert M\rvert \to 0} C' = 0$.
\end{thm}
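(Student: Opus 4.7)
The plan is to follow the modulated relative-entropy strategy of Jabin--Wang \cite{JabinWang}, adapted to the whole-space setting as in \cite{Vortex} for the torus case, with Theorem~\ref{thm:MkV-log-Riesz-unif-LSI} serving as the crucial new regularity input that allows the transfer of the torus argument to $\mathbb{R}^d$.

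First, I would differentiate $E_N(t) \coloneqq \mathcal{H}\bigl( m^N_t \big| m_t^{\otimes N}\bigr)$ along the coupled flows \eqref{eq:ps-log-Riesz} and \eqref{eq:MkV-log-Riesz}. A standard integration by parts on $\mathbb{R}^{dN}$, combined with the fact that $K = M \nabla g_0$ is divergence-free (since $M$ is anti-symmetric), produces
\[
\frac{d}{dt} E_N(t) = - \mathcal{I}\bigl(m^N_t \big| m_t^{\otimes N}\bigr) + \sum_{i=1}^N \int \Bigl( \frac{1}{N-1} \sum_{j \neq i} K(x_i - x_j) - K \star m_t(x_i) \Bigr) \cdot \nabla_{x_i} \ln \frac{m^N_t}{m_t^{\otimes N}} \, m^N_t(d\mathbf{x}).
\]
A Cauchy--Schwarz step absorbs half of the Fisher-information term and reduces the remaining error to controlling the expectation of a centred U-statistic $\Psi_N(\mathbf{x}) = \sum_{i,j} \psi_t(x_i,x_j)$ built from $\psi_t(x,y) = \bigl| K(x-y) - K \star m_t(x)\bigr|^2$.

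Second, I would apply the change-of-reference-measure inequality
\[
\int \Psi_N \, dm^N_t \leq \frac{1}{\eta} \Bigl( E_N(t) + \ln \int e^{\eta \Psi_N} \, dm_t^{\otimes N} \Bigr)
\]
and invoke the combinatorial large-deviation lemma of \cite[Theorem 4]{JabinWang}, in its refined form from \cite[Theorem 2]{Vortex}, to show that $\ln \int e^{\eta \Psi_N} dm_t^{\otimes N}$ remains uniformly bounded in $N$ and $t$ for $\eta$ small enough compared with $\lvert M\rvert$. This requires three ingredients, all supplied by the preceding results: (i) the uniform $L^\infty$ bound on $m_t$ from Proposition~\ref{prop:MkV-log-Riesz-wp-reg}; (ii) the cancellation property $\int \psi_t(x,\cdot) \, dm_t$ forcing the relevant mean-zero structure, which is where the anti-symmetric log case is crucial; (iii) uniform-in-time Gaussian-type tails of $m_t$, which on the whole space are delivered by the Lipschitz bound on $u_t = -\ln(dm_t/d\mu_0)$ provided by Theorem~\ref{thm:MkV-log-Riesz-unif-LSI} (and allow one to compare expectations under $m_t^{\otimes N}$ with those under $\mu_0^{\otimes N}$).

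Third, the uniform LSI furnished by Theorem~\ref{thm:MkV-log-Riesz-unif-LSI} tensorizes, giving $\mathcal{I}\bigl(m^N_t \big| m_t^{\otimes N}\bigr) \geq \tfrac{4}{C_\textnormal{LSI}} E_N(t)$. Combining the three steps yields a differential inequality
\[
\frac{d}{dt} E_N(t) \leq - (\rho - C\lvert M\rvert) E_N(t) + C
\]
with $\rho$, $C$ depending only on $d$, $U$ and $m_0$, and Grönwall's lemma produces precisely \eqref{eq:log-Riesz-global-POC}. The monotonicity of $C$ and $\rho$ in $\lvert M\rvert$ is inherited directly from the corresponding monotonicity stated in Theorem~\ref{thm:MkV-log-Riesz-unif-LSI}. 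The main obstacle is step two: on the torus the large-deviation estimate is more or less standard because of compactness, while on $\mathbb{R}^d$ one needs simultaneous uniform-in-time $L^\infty$ control and quantitative Gaussian tails of $m_t$, which is exactly what the reflection-coupling analysis behind Theorem~\ref{thm:MkV-log-Riesz-unif-LSI} provides.
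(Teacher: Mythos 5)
Your overall architecture (entropy dissipation, a change-of-measure step, tensorized LSI, Grönwall) matches the paper's, but your Step 2 contains a genuine gap that the paper's proof is specifically designed to avoid. After Cauchy--Schwarz, the functional whose exponential moments you must control is $\psi_t(x,y)=\lvert K(x-y)-K\star m_t(x)\rvert^2$, which for $s=0$ behaves like $\lvert x-y\rvert^{-2}$ near the diagonal. Such a function has \emph{no} finite exponential moments under $m_t^{\otimes 2}$: for every $\eta>0$ one has $\int_{B(x,1)}e^{\eta\lvert x-y\rvert^{-2}}m_t(\dd y)=+\infty$ wherever $m_t$ is bounded below. Hence neither the large-deviation estimate of \cite{JabinWang} for squared kernels nor the combinatorial estimate \cite[Theorem 4]{JabinWang} (which requires a \emph{bounded} $\phi$ with small $\lVert\phi\rVert_{L^\infty}$ and two cancellation conditions that the nonnegative $\psi_t$ cannot satisfy) applies to your $\Psi_N$. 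This failure of the Cauchy--Schwarz route is exactly why the singular log/Biot--Savart case needs a different treatment.

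The paper never performs Cauchy--Schwarz. It uses $\nabla\cdot K^\varepsilon=0$ a second time to integrate by parts in the error term, which eliminates $\nabla_i\ln m^{N}_t$ entirely and leaves only $-\nabla\ln m_t(x^i)\cdot\bigl(K(x^i-x^j)-K\star m_t(x^i)\bigr)$; after symmetrization this yields a \emph{first-order} functional $\phi_t(x,y)$ whose singular part is $K(x-y)\cdot\bigl(\nabla\ln m_t(x)-\nabla\ln m_t(y)\bigr)$. The Hessian bound $\lVert\nabla^2\ln m_t\rVert_{L^\infty}\lesssim (t\wedge1)^{-1/2}$ of Theorem~\ref{thm:MkV-log-Riesz-unif-LSI} makes the gradient difference Lipschitz in $\lvert x-y\rvert$ and thus exactly cancels the $\lvert x-y\rvert^{-1}$ singularity of $K$, while the decay $\lvert K\star m_t(x)\rvert\lesssim(1+\lvert x\rvert)^{-1}$ together with $\lvert\nabla\ln m_t(x)\rvert\lesssim 1+\lvert x\rvert$ controls the remaining terms; this is what makes $\phi_t$ bounded (with norm $O(\lvert M\rvert/\sqrt{t\wedge1})$, an integrable singularity at $t=0$) and centered in each variable, so that \cite[Theorem 4]{JabinWang} applies via convex duality. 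Your outline invokes the Lipschitz bound on $u_t$ for Gaussian tails but never uses the Hessian bound, which is the decisive ingredient; without it the diagonal singularity of $K$ is not tamed and the argument does not close.
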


By the dependency of $C$, $C'$ and $\rho$ in $\lvert M\rvert$,
we find $C' < \rho$
when $\lvert M\rvert$ is sufficiently small,
and in this case
the bound \eqref{eq:log-Riesz-global-POC} becomes uniform in time. Even when $|M|$ is not small, we get a global PoC estimate for the dissipative log-interaction on $\R^d$ with a confinement potential, which is new to our knowledge (the case $U=0$ is addressed in \cite{FWVortex}).

The proof of Theorem~\ref{thm:log-Riesz-global-POC}
is postponed to Section~\ref{sec:log-Riesz-POC}.

\subsection{Uniform PoC for log interaction with quadratic confinement potential}
\label{sec:log-Riesz-quadratic-unif-POC}

In this subsection, we impose the additional assumption.

\begin{assu}
\label{assu:U-quadratic}
The confinement potential reads $U(x) = \kappa_U \lvert x\rvert^2\!/ 2$
for some $\kappa_U > 0$.
\end{assu}

Under Assumptions~\ref{assu:MkV-log-Riesz} and \ref{assu:U-quadratic},
we easily verify that the Gaussian measure $m_*$ with density
\[
m_* (x) = \exp \bigl( -U(x) \bigr)
= \exp \biggl( - \frac{\kappa_U \lvert x \rvert^2}{2} \biggr)
\]
is invariant to the mean field flow \eqref{eq:MkV-log-Riesz}.
The first result that we obtain is the exponential convergence
of the mean field flow towards $m_*$.

\begin{thm}
\label{thm:MkV-log-Riesz-quadratic-convergence}
Let Assumptions~\ref{assu:MkV-log-Riesz} and \ref{assu:U-quadratic} hold
and suppose additionally that $M$ is anti-symmetric.
Let $m_0 \in \mathcal P(\mathbb R^d)$
satisfy the conditions of Theorem~\ref{thm:MkV-log-Riesz-unif-LSI}
and let $(m_t)_{t \geqslant 0}$ be the solution to \eqref{eq:MkV-log-Riesz}.
Then, we have, for all $t \geqslant 0$,
\[
\mathcal H (m_t | m_*) \leqslant \exp(-2\kappa_U t) \mathcal H(m_0 | m_*)\,.
\]
Moreover, setting $u_t = - \ln \dd m_t /\!\dd m_*$,
we have, for all $t > 0$,
\[
\sup_{x\in\mathbb R^d}
\lvert x \cdot K \star (m_t - m_*) \rvert \leqslant C e^{-ct}\,, \quad
\lVert \nabla u \rVert_{L^\infty} \leqslant C e^{-ct}\,,
\quad
\lVert \nabla^2 u \rVert_{L^\infty}
\leqslant \frac{C e^{-ct}}{\sqrt{t\wedge 1}}
\]
for some $C$, $c > 0$ that depend only on
$d$, $s$, $\kappa_U$, $M$ and $\lVert \nabla u_0 \rVert_{L^\infty}$.
\end{thm}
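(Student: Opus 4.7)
My plan is to split the argument into two stages: first, an exponential decay of the relative entropy $\mathcal H(m_t | m_*)$, obtained purely by exploiting the antisymmetry of $M$ and the Gaussian structure of $m_*$; second, a bootstrap of this decay into the stated pointwise bounds on $\nabla u_t$, $\nabla^2 u_t$ and $x \cdot K\star(m_t - m_*)$ via the HJB equation satisfied by $u_t$ and the regularity estimates of Theorem~\ref{thm:HJB-global-lip-hess}. For the first stage, I would compute
\[
\frac{d}{dt}\mathcal H(m_t | m_*)
= -\mathcal I(m_t | m_*) + \int m_t \nabla \ln(m_t / m_*) \cdot K\star m_t \dd x\,,
\]
and show that the interaction term vanishes. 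Integration by parts turns the contribution of $\nabla m_t$ into an integral against $\nabla \cdot K = M : \nabla^2 g$, which is zero by the antisymmetry of $M$ and the symmetry of $\nabla^2 g$. The remaining contribution equals $\kappa_U \int m_t(x)\, x \cdot K\star m_t(x) \dd x$; symmetrizing in $(x,y)$ it becomes $\tfrac{\kappa_U}{2} \iint m_t(x) m_t(y)(x - y)\cdot M \nabla g(x - y) \dd x \dd y$, which vanishes pointwise since $\nabla g(z) = -z/\lvert z \rvert^2$ and $z \cdot M z = 0$. Consequently $\frac{d}{dt}\mathcal H(m_t|m_*) = -\mathcal I(m_t|m_*)$, and as $m_*$ is the centred Gaussian with covariance $\kappa_U^{-1} I_d$, Bakry--Émery gives an LSI with constant $2/\kappa_U$, whence the first displayed estimate and, by Pinsker and Talagrand, $\TV$ and $\mathcal W_2$ decays at the same rate.

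For the second stage, following the derivation already used in the proof of Theorem~\ref{thm:MkV-log-Riesz-unif-LSI}, I would write the HJB equation satisfied by $u_t$:
\[
\partial_t u_t = \Delta u_t - \lvert \nabla u_t \rvert^2 + \tilde b_t \cdot \nabla u_t + \varphi_t\,,
\]
with $\tilde b_t(x) = -\kappa_U x - K\star m_t(x)$ and source $\varphi_t(x) = -\kappa_U\, x \cdot K \star m_t(x)$. The crucial observation is that, because $m_*$ is radial and $M$ is antisymmetric, $x \cdot K\star m_*(x) \equiv 0$, hence $\varphi_t(x) = -\kappa_U\, x \cdot K \star (m_t - m_*)(x)$. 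Using the antisymmetry of $M$ once more to rewrite $x \cdot M y = x \cdot M(y - x)$ inside the convolution gives
\[
\lvert x \cdot K\star (m_t - m_*)(x) \rvert
\leqslant \lvert M\rvert\, \lvert x\rvert \int \frac{\lvert m_t - m_*\rvert(y)}{\lvert x - y\rvert} \dd y\,.
\]
I would split this integral at $\lvert y \rvert = \lvert x \rvert / 2$: on $\{\lvert y\rvert \leqslant \lvert x\rvert / 2\}$ the ratio $\lvert x\rvert / \lvert x - y\rvert$ is bounded by $2$, so the contribution is at most $2 \lVert m_t - m_* \rVert_{\TV}$; on $\{\lvert y\rvert > \lvert x\rvert / 2\}$ one interpolates between the uniform $L^\infty$ bound on $m_t$ and the uniform moment bounds (both given by Proposition~\ref{prop:MkV-log-Riesz-wp-reg}) and uses Pinsker's $\TV$ decay to beat the local singularity. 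Combining the two parts yields $\lVert \varphi_t \rVert_{L^\infty} \lesssim e^{-c t}$ for some $c > 0$, which is precisely the first displayed bound of the theorem.

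The remaining bounds on $\nabla u_t$ and $\nabla^2 u_t$ then follow by feeding this exponential decay of $\varphi_t$ into Theorem~\ref{thm:HJB-global-lip-hess}. The Duhamel integrand $\frac{e^{-c v}}{\sqrt{v \wedge 1}} \lVert \varphi_{t-v} \rVert_{L^\infty}$ in \eqref{eq:HJB-global-lip} carries a global factor $e^{-c' t}$ after merging the two exponentials, so that $\lVert \nabla u_t \rVert_{L^\infty} \lesssim e^{-c' t}$. Injecting this into \eqref{eq:HJB-global-hess}, the extra source term $\nabla \tilde b_t \cdot \nabla u_t$ inherits the same exponential decay (using that $\lVert \nabla \tilde b_t \rVert_{L^\infty}$ is controlled uniformly in time via the uniform Hölder regularity of $m_t$ and Proposition~\ref{prop:log-Riesz-control-L1-Lp}), and this yields $\lVert \nabla^2 u_t \rVert_{L^\infty} \lesssim e^{-c t}/\sqrt{t \wedge 1}$. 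The main obstacle in this plan is the pointwise decay estimate on $\sup_x \lvert x \cdot K \star (m_t - m_*)\rvert$: the unbounded prefactor $\lvert x\rvert$ combined with the local $1/\lvert x - y\rvert$ singularity of the kernel requires a simultaneous exploitation of the uniform $L^\infty$ bound, the uniform moment bounds and the $\TV$ decay, and this is the only point where the quadratic structure of $U$ enters in an essential way beyond the entropy computation.
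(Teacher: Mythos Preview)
Your plan is essentially the paper's own proof: entropy dissipation using antisymmetry, then HJB estimates from Theorem~\ref{thm:HJB-global-lip-hess} fed by the decay of $\varphi_t$. Two points where your sketch diverges from or falls short of the paper are worth noting.

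First, the paper handles the unbounded prefactor in $\varphi_t$ more directly than your splitting. Since $\nabla g^\varepsilon(z)$ is parallel to $z$ and $M$ is antisymmetric, one has $(x-y)^\top M \nabla g^\varepsilon(x-y) = 0$ pointwise, so
\[
x^\top M \nabla g^\varepsilon(x-y) = y^\top M \nabla g^\varepsilon(x-y)\,,
\]
which replaces the unbounded $x$ by the integration variable $y$ in a single step. After this substitution the kernel is $|y|\,|K(x-y)|$, and splitting at $|x-y|=1$ plus interpolation $\lVert m_t - m_*\rVert_{L^q} \leqslant \lVert m_t - m_*\rVert_{L^1}^{1/q}\lVert m_t - m_*\rVert_{L^\infty}^{1/p}$ gives $\lVert\varphi_t\rVert_{L^\infty}\lesssim e^{-ct}$ uniformly in $s\in[0,d-1)$. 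Your identity ``$x\cdot My = x\cdot M(y-x)$'' and the resulting bound $|M|\,|x|\int |m_t - m_*|(y)/|x-y|\,\dd y$ do not actually use antisymmetry (the bound is just Cauchy--Schwarz), and your subsequent splitting at $|y|=|x|/2$ ends up re-deriving the $x\mapsto y$ swap on the inner region anyway; it also implicitly specializes to $s=0$.

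Second, and more substantively, your Hessian step is incomplete. Applying \eqref{eq:HJB-global-hess} requires exponential decay not only of $\nabla\tilde b_t\cdot\nabla u_t$ but also of $\lVert\nabla\varphi_t\rVert_{L^\infty}$, which you do not address. In the paper this is the bulk of Step~3: $\nabla\varphi_t$ contains the term $\kappa_U\, x\cdot K\star\nabla(m_t - m_*)(x)$, for which one repeats the $x\mapsto y$ substitution and then needs $\lVert\nabla(m_t-m_*)\rVert_{L^1}$ and $\lVert\nabla(m_t-m_*)\rVert_{L^q}$ to decay. This is obtained by writing $\nabla m_t - \nabla m_* = -\nabla u_t\, m_t - \nabla U\,(m_t - m_*)$ and using the already-established decay of $\nabla u_t$ together with $\mathcal W_1(m_t,m_*)\lesssim e^{-ct}$ from Talagrand. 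Without this, the Hessian conclusion does not follow.
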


The proof of Theorem~\ref{thm:MkV-log-Riesz-quadratic-convergence}
is postponed to Section~\ref{sec:MkV-log-Riesz-bounds}.

Building upon the exponential convergence above,
we obtain the uniform-in-time propagation of chaos
without restriction on the strength of the interaction.

\begin{thm}
\label{thm:log-Riesz-quadratic-unif-POC}
Let Assumptions~\ref{assu:MkV-log-Riesz} and \ref{assu:U-quadratic} hold
and suppose additionally that $s = 0$ and $M$ is anti-symmetric.
Let $(m_t)_{t \geqslant 0}$ be a solution
to \eqref{eq:MkV-log-Riesz} whose initial value
$m_0$ satisfies the conditions of Theorem~\ref{thm:MkV-log-Riesz-unif-LSI}
and let $(m^N_t)_{t \geqslant 0}$ be a solution to \eqref{eq:ps-log-Riesz}.
Then, there exists $C > 0$, depending only on
$d$, $\kappa_U$, $M$ and $m_0$, such that
\begin{equation}
\label{eq:log-Riesz-quadratic-unif-POC}
\mathcal H \bigl( m^N_t \big| m_t^{\otimes N} \bigr)
\leqslant
C \exp (-2\kappa_U t)
\Bigl( \mathcal H \bigl( m^N_0 \big| m_0^{\otimes N} \bigr) + 1 \Bigr)
\end{equation}
for all $t \geqslant 0$, once
$\mathcal H \bigl( m^N_0 \big| m_0^{\otimes N} \bigr) < \infty$.
\end{thm}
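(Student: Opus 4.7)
The plan is to adapt the relative-entropy method of Jabin--Wang underlying the (non-uniform) bound of Theorem~\ref{thm:log-Riesz-global-POC}, exploiting the exponential convergence of $(m_t)_{t\geqslant 0}$ towards the Gaussian equilibrium $m_*$ given by Theorem~\ref{thm:MkV-log-Riesz-quadratic-convergence} in order to upgrade the constant error term of Theorem~\ref{thm:log-Riesz-global-POC} into one that actually decays in time. Setting $H_N(t) \coloneqq \mathcal H(m_t^N | m_t^{\otimes N})$ and $\mathcal I_N(t) \coloneqq \mathcal I(m_t^N | m_t^{\otimes N})$, the standard entropy-dissipation identity reads
\[
\frac{\dd}{\dd t} H_N(t) = -\mathcal I_N(t) + \sum_{i=1}^N \int_{\R^{dN}} \biggl(\frac{1}{N-1}\sum_{j \neq i} K(x_i-x_j) - K\star m_t(x_i)\biggr) \cdot \nabla_i \ln \frac{m_t^N}{m_t^{\otimes N}}\, \dd m_t^N\,.
\]

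I would treat the cross term exactly as in the proof of Theorem~\ref{thm:log-Riesz-global-POC}: Young's inequality absorbs half of $\mathcal I_N$, and the remaining squared $L^2$ error is converted into an exponential moment via the variational formula
\[
\int F\, \dd m_t^N \leqslant \eta^{-1} H_N(t) + \eta^{-1} \ln \int e^{\eta F}\, \dd m_t^{\otimes N}\,.
\]
The moment is then bounded by the Jabin--Wang large-deviation inequality, whose hypotheses --- $\nabla \cdot K = 0$ (guaranteed here by the anti-symmetry of $M$ together with the radial form of $g_0$), the uniform LSI of $m_t$ from Theorem~\ref{thm:MkV-log-Riesz-unif-LSI}, and uniform Lipschitz/Hessian control on $K\star m_t$ and $\nabla\ln m_t$ --- are all supplied by Theorems~\ref{thm:MkV-log-Riesz-unif-LSI} and~\ref{thm:MkV-log-Riesz-quadratic-convergence}. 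So far everything mirrors the proof of Theorem~\ref{thm:log-Riesz-global-POC}.

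The \emph{new} input, specific to the quadratic confinement, is that the additional estimates
\[
\lVert \nabla u_t \rVert_{L^\infty} \lesssim e^{-ct}\,,\qquad \sup_{x}\lvert x \cdot K\star(m_t - m_*)\rvert \lesssim e^{-ct}
\]
of Theorem~\ref{thm:MkV-log-Riesz-quadratic-convergence} imply that the commutator governing the Jabin--Wang exponential moment decays exponentially once its Gaussian-equilibrium value is subtracted --- and that equilibrium value itself vanishes by the parity/anti-symmetry of $M$ combined with the rotational invariance of $m_*$. Consequently the remainder after Young is bounded by $Ce^{-ct}(H_N(t)+1)$, producing
\[
\frac{\dd}{\dd t} H_N(t) \leqslant -\tfrac{1}{2}\mathcal I_N(t) + Ce^{-ct}\bigl(H_N(t) + 1\bigr)\,.
\]
Using the uniform LSI on $m_t$, which tensorises to $m_t^{\otimes N}$ with the same constant $\eta(t)$ and which converges to $1/\kappa_U$ as $t\to\infty$ (thanks to $\lVert\nabla^2 u_t\rVert_{L^\infty}\to 0$ and Holley--Stroock), one lower-bounds $\tfrac{1}{2}\mathcal I_N \geqslant 2\eta(t)^{-1}H_N$ with $2\eta(t)^{-1}\to 2\kappa_U$. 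Gr\"onwall on the resulting inequality
\[
\frac{\dd}{\dd t} H_N(t) \leqslant -\bigl(2\kappa_U - o(1)\bigr)H_N(t) + Ce^{-ct}
\]
then yields the announced bound $H_N(t) \leqslant Ce^{-2\kappa_U t}(H_N(0)+1)$.

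The main obstacle I expect is \emph{rate calibration}: Gr\"onwall delivers the clean rate $e^{-2\kappa_U t}$ only when the commutator error decays at least as fast, i.e.\ $c \geqslant 2\kappa_U$; the rate given directly by Theorem~\ref{thm:MkV-log-Riesz-quadratic-convergence} depends on the model parameters and need not a priori match $2\kappa_U$. If it does not, a bootstrap should recover the sharp rate: a first, coarser exponential decay of $H_N$ can be used to refine the bounds on $u_t$ and feed them back into the commutator estimate. Two minor technical issues worth anticipating are the short-time Hessian singularity $\lVert\nabla^2 u_t\rVert_{L^\infty} \lesssim 1/\sqrt{t\wedge 1}$, which forces the long-time analysis to start at some $t_0>0$ and be glued to a local-in-time PoC bound on $[0,t_0]$ in the spirit of Remark~\ref{rem:LLF}, and the justification of the entropy-dissipation identity for the singular kernel $K$, to be performed via the mollification $K^\varepsilon$ of Propositions~\ref{prop:MkV-log-Riesz-wp-reg}--\ref{prop:ps-log-Riesz-wp}.
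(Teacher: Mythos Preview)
Your overall strategy---feed the exponential decay of Theorem~\ref{thm:MkV-log-Riesz-quadratic-convergence} into the entropy inequality so that the error term becomes integrable in time, then close with the LSI and Gr\"onwall---is exactly what the paper does. But there is a genuine gap in how you handle the cross term, and it is not a minor one.

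You write that ``Young's inequality absorbs half of $\mathcal I_N$, and the remaining squared $L^2$ error is converted into an exponential moment''. This is \emph{not} what the proof of Theorem~\ref{thm:log-Riesz-global-POC} does, and it would not work here: after Young, the error involves $\lvert K\rvert^2\sim\lvert x\rvert^{-2}$, which is not locally integrable in $d=2$ and in any dimension is not of the bounded, mean-zero form that the Jabin--Wang concentration estimate requires. The divergence-free structure $\nabla\cdot K=0$ is exploited \emph{before} the variational formula, not as a hypothesis of the concentration bound: one integrates by parts in the cross term to replace $\nabla_i\ln(m_t^N/m_t^{\otimes N})$ by $-\nabla\ln m_t(x^i)$, which keeps the \emph{full} Fisher information. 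After symmetrisation this produces the bounded function $\phi_t(x,y)$ of \eqref{eq:def-phi}, whose most singular piece is $K(x-y)\cdot(\nabla\ln m_t(x)-\nabla\ln m_t(y))$; the singularity of $K$ is cancelled by the Hessian bound on $\ln m_t$. In the quadratic case one further uses that $K(x-y)\cdot(\nabla\ln m_*(x)-\nabla\ln m_*(y))=0$ (since $\nabla\ln m_*$ is linear and $x\cdot K(x)=0$) to replace $\nabla\ln m_t$ by $\nabla u_t$, and the decaying Hessian bound of Theorem~\ref{thm:MkV-log-Riesz-quadratic-convergence} then yields $\lVert\phi_t\rVert_{L^\infty}\leqslant Ce^{-ct}/\sqrt{t\wedge1}$. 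The variational formula is applied with $\delta_t\propto\lVert\phi_t\rVert_{L^\infty}$, so the Jabin--Wang exponential moment is $O(1)$ and one arrives at
\[
\frac{\dd H_N}{\dd t}\leqslant -\mathcal I_N + \frac{Ce^{-ct}}{\sqrt{t\wedge1}}(H_N+1)\,,
\]
with the full $\mathcal I_N$ still present; the LSI constant of $m_t$ tends to the Gaussian one (via Aida--Shigekawa and $\lVert\nabla u_t\rVert_{L^\infty}\to0$), and Gr\"onwall closes. Your remarks on the short-time $1/\sqrt t$ singularity (harmless: integrable) and on the $K^\varepsilon$ mollification are on point, and the rate-calibration issue you raise---whether the source term decays fast enough to recover exactly $e^{-2\kappa_U t}$ rather than $e^{-ct}$---is a legitimate concern that the paper's proof glosses over.
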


The proof of Theorem~\ref{thm:log-Riesz-quadratic-unif-POC}
is postponed to Section~\ref{sec:log-Riesz-POC}. Notice that, as discussed in e.g.\ \cite{RosenzweigSerfatyMLSI}, this result describes a \emph{generation} of chaos property (not only \emph{propagation}) since it implies that $\mathcal H\bigl(m_t^N\big|m_t^{\otimes N}\bigr)$ is of order $1$ (in terms of $N$) for large times even if it is not the case at time $t=0$.
Here, moreover, and more surprisingly, the right hand side of \eqref{eq:log-Riesz-quadratic-unif-POC} vanishes at $t\rightarrow\infty$, which is due to the fact that in the specific case of an isotropic Gaussian confining potential, the invariant measure of the system of interacting particles is a tensorized Gaussian distribution, which is thus also the long-time limit of the product of solutions of the non-linear equation. Finally, in contrast with the results stated in Section~\ref{sec:MkV-smooth}, here (as in Theorem~\ref{thm:log-Riesz-global-POC}) we only state a result on the relative entropy of the full system, and thus by sub-additivity of the relative entropy this yields PoC estimates on the $k$-particles marginals which are not sharp in the sense of \cite{LackerHier,LLFSharp}.

\subsection{Proofs}

\subsubsection{Proofs of uniform bounds and LSI}
\label{sec:MkV-log-Riesz-bounds}

\begin{proof}[Proof of Theorem~\ref{thm:MkV-log-Riesz-unif-LSI}]
Set $\mu_0 = Z^{-1}\exp(-U)$ with $Z = \int \exp(-U)$.

\proofstep{Step 1: Construction of a regular approximation}
Recall that the initial condition $m_0$ is such that
\[
u_0 = - \ln \frac{\dd m_0}{\dd \mu_0} = - \ln m_0 - \ln Z - U
\]
is Lipschitz continuous.
We construct, for $\varepsilon > 0$, the approximative initial value
\[
m^\varepsilon_0 = \frac{\exp (-u_0\star\eta^\varepsilon) \mu_0}
{\int \exp(- u_0\star\eta^\varepsilon) \mu^\varepsilon_0}\,,
\]
where $\mu^\varepsilon_0 \propto \exp(-U^\varepsilon)$.
Construct as well the approximative dynamics \eqref{eq:MkV-log-Riesz-approx}
with the mollified kernel $K^\varepsilon = K \star \eta^\varepsilon$
and mollified confinement $U^\varepsilon = U \star \eta^\varepsilon$.
By construction, the initial value $m^\varepsilon_0$ converges
to $m_0$ in $L^1$ and is uniformly bounded in $L^\infty$,
thus the last claim of Proposition~\ref{prop:MkV-log-Riesz-wp-reg}
indicates that $m^\varepsilon_t \to m_t$ in $L^1$ for all $t \geqslant 0$.
Using the uniqueness of the solution of the Fokker--Planck equation satisfied by the relative density $\dd m_t^\varepsilon/\!\dd \mu^\varepsilon_0 $
and a Feynman--Kac argument similar to that of Proposition~\ref{prop:feynman-kac},
we obtain that
$u^\varepsilon_t \coloneqq - \ln \dd m_t^\varepsilon / \! \dd \mu^\varepsilon_0$
is $\mathcal C^2$ in space and its derivatives
$\nabla u^\varepsilon_t$, $\nabla^2 u^\varepsilon_t$
grow at most linearly in space (locally in time).
As a consequence, $u^\varepsilon_t$ is a classical solution
to the HJB equation
\[
\partial_t u^\varepsilon_t
= \Delta u^\varepsilon_t - \lvert \nabla u^\varepsilon_t \rvert^2
+ \tilde{b}^\varepsilon_t \cdot \nabla u^\varepsilon_t
+ \varphi^\varepsilon_t\,,
\]
where $\tilde{b}^\varepsilon_t$, $\varphi^\varepsilon_t$ are given by
\begin{align*}
\tilde{b}^\varepsilon_t &= - \nabla U^{\varepsilon}
- K^\varepsilon \star m^\varepsilon_t\,,\\
\varphi^\varepsilon_t &= - \nabla \cdot (K^\varepsilon \star m^\varepsilon_t)
- (K^\varepsilon \star m^\varepsilon_t) \cdot \nabla U^{\varepsilon}\,.
\end{align*}

\proofstep{Step 2: Uniform bound on
$K \star m_t$ and $\nabla u_t$, and uniform LSI}
We verify that the drift $\tilde b^\varepsilon_t$ satisfies
the semi-monotonicity condition of Theorem~\ref{thm:HJB-global-lip-hess},
as the contribution from the interaction $K^\varepsilon \star m^\varepsilon_t$
is controlled by Proposition~\ref{prop:log-Riesz-control-L1-Lp}:
\[
\lVert K^\varepsilon \star m^\varepsilon_t \rVert_{L^\infty}
\leqslant
\lVert K \star m^\varepsilon_t \rVert_{L^\infty}
\lesssim \lVert m^\varepsilon_t \rVert_{L^1}^{1 - (s+1)/d}
\lVert m^\varepsilon_t \rVert_{L^\infty}^{(s+1)/d}\,,
\]
and $U$ (along with its approximation $U^\varepsilon$) is already weakly convex.
Now we focus on proving the uniform $L^\infty$ bound
on $\varphi^\varepsilon_t$.
For the first term in $\varphi^\varepsilon_t$, we find that
in the Coulombic and the super-Coulombic cases,
due to the anti-symmetry of $M$, we have
\[
\nabla \cdot (K^\varepsilon \star m^\varepsilon_t)
= \nabla \cdot (M \nabla g \star m^\varepsilon_t \star \eta^\varepsilon)
= M : g \star \nabla^2 ( m^\varepsilon_t \star \eta^\varepsilon ) = 0\,;
\]
for the sub-Coulombic case where $s < d - 2$,
applying Proposition~\ref{prop:log-Riesz-control-L1-Lp} with $p = \infty$,
we get
\[
\lVert \nabla \cdot (K^\varepsilon \star m^\varepsilon_t) \rVert_{L^\infty}
\lesssim \lVert m^\varepsilon_t \rVert_{L^1}^{1 - (s+2)/d}
\lVert m^\varepsilon_t \rVert_{L^\infty}^{(s+2)/d}\,,
\]
so the first term is uniformly bounded in $L^\infty$ in both cases.
To treat the second term, we note that
\[
\lvert K^\varepsilon \star m^\varepsilon(x) \rvert
\leqslant \sup_{x' \in B(x, \varepsilon)}
\lvert K \star m^\varepsilon(x') \rvert\,,
\]
so it suffices to prove the bound uniformly:
\[
\lvert K \star m^\varepsilon(x') \rvert
\lesssim (1 + \lvert x\rvert)^{-1}\,.
\]
Decompose the kernel in the following way:
\[
K(x) = K(x) \1_{\lvert x\rvert < R}
+ K(x) \1_{\lvert x\rvert \geqslant R}
\eqqcolon K_1(x) + K_2(x)\,.
\]
For the exploding part $K_1$, we have
\begin{align*}
\lvert K_1 \star m^\varepsilon_t (x) \rvert
&= \biggl| \int_{B(x,R)} K(x - y) m^\varepsilon_t(y) \dd y \biggr| \\
&\lesssim
\int_{B(x,R)} \lvert x - y\rvert^{-s-1} m^\varepsilon_t(y) \dd y \\
&\leqslant
\biggl( \int_{B(x,R)} \lvert x - y\rvert^{-p(s+1)} \dd y \biggr)^{\!1/p}
\lVert m^\varepsilon_t \1_{B(x,R)}\rVert_{L^q} \\
&\lesssim R^{d/p - s - 1} \lVert m^\varepsilon_t
\1_{B(x,R)}\rVert_{L^q}\,,
\end{align*}
where $p \in \bigl(1, \frac d{s+1}\bigr)$ and $p^{-1} + q^{-1} = 1$.
For $\lvert x\rvert > R$, we observe
\[
\int_{B(x,R)} (m_t^\varepsilon)^q
\leqslant \lVert m^\varepsilon_t \rVert_{L^\infty}^{q-1}
\int_{B(x,R)} m^\varepsilon_t
\leqslant \lVert m^\varepsilon_t \rVert_{L^\infty}^{q-1}
(\lvert x \rvert - R)^{-q} \int_{\R^d} \lvert x \rvert^q m^\varepsilon_t(x) \dd x\,.
\]
For the non-exploding part $K_2$, we have
\begin{align*}
\lvert K_2 \star m^\varepsilon_t(x) \rvert
&= \biggl| \int_{\mathbb R^d \setminus B(x,R)} K(x - y)
m^\varepsilon_t(y) \dd y \biggr|\\
&\lesssim \int_{\mathbb R^d \setminus B(x,R)}
\lvert x - y \rvert^{-s-1} m^\varepsilon_t(y) \dd y \\
&= \lvert x \rvert^{-s-1} \int_{\mathbb R^d \setminus B(x,R)}
\frac{\lvert x\rvert^{s+1}}{\lvert x - y \lvert^{s+1}}
m^\varepsilon_t(y) \dd y \\
&\lesssim \lvert x \rvert^{-s-1} \int_{\mathbb R^d \setminus B(x,R)}
\frac{\lvert x - y\rvert^{s+1} + \lvert y\rvert^{s+1}}
{\lvert x -y\rvert^{s+1}} m^\varepsilon_t(y) \dd y \\
&\leqslant \lvert x\rvert^{-s-1}
\int_{\R^d} (1 + R^{-s-1} \lvert y\rvert^{s+1}) m^\varepsilon_t(y) \dd y\,.
\end{align*}
Thanks to Proposition~\ref{prop:MkV-log-Riesz-wp-reg},
the mean field flow $(m^\varepsilon_t)_{t \geqslant 0}$
enjoys uniform bounds on its $L^\infty$ norm and its moments,
as all moments of its initial value $m_0^\varepsilon$ are finite.
Thus, we have, uniformly in $t$,
\[
\sup_{t \geqslant 0}
\lvert K \star m^\varepsilon_t (x)\rvert
\leqslant
\sup_{t \geqslant 0}
\lvert K_1 \star m^\varepsilon_t(x)
+ K_2 \star m^\varepsilon_t(x) \rvert
\lesssim (1+\lvert x\rvert)^{-1}\,.
\]
So we have obtained
$\sup_{t\geqslant 0} \lVert \varphi^\varepsilon_t \rVert_{L^\infty} < \infty$,
and the first claim of Theorem~\ref{thm:HJB-global-lip-hess} implies that
$\lVert \nabla u^\varepsilon_t \rVert_{L^\infty}$ is uniformly bounded.
Taking the limit $\varepsilon \to 0$, we recover
the uniform spatial Lipschitz bound on $u_t$ and thus by
the perturbation lemma of Aida--Shigekawa,
the flow $(m_t)_{t \geqslant 0}$ satisfies a uniform LSI.

\proofstep{Step 3: Uniform bound on $\nabla^2 u_t$}
We want to apply the second claim of Theorem~\ref{thm:HJB-global-lip-hess}
to the HJB solution $u^\varepsilon_t$,
and it suffices to control uniformly in time the following quantities:
\begin{align*}
\nabla \tilde b^\varepsilon_t \cdot \nabla u^\varepsilon_t
&= ( - \nabla^2 U^{\varepsilon} - K^\varepsilon \star \nabla m^\varepsilon_t )
\cdot \nabla u^\varepsilon_t\,, \\
\nabla \varphi^\varepsilon_t
&= - \nabla^2 \cdot (K^\varepsilon \star m^\varepsilon_t)
- \nabla (K^\varepsilon \star m^\varepsilon_t) \cdot \nabla U^{\varepsilon}
- (K^\varepsilon \star m^\varepsilon_t) \cdot \nabla^2 U^{\varepsilon}\,.
\end{align*}
The first quantity can be bounded by
\[
\lVert \nabla \tilde b^\varepsilon_t
\cdot \nabla u^\varepsilon_t \rVert_{L^\infty}
\leqslant \lVert \nabla \tilde b^\varepsilon_t \rVert_{L^\infty}
\lVert \nabla u^\varepsilon_t \rVert_{L^\infty}
\leqslant \bigl( \lVert \nabla^2 U \rVert_{L^\infty}
+ \lVert K^\varepsilon \star \nabla m^\varepsilon_t \rVert_{L^\infty} \bigr)
\lVert \nabla u^\varepsilon_t \rVert_{L^\infty}\,,
\]
where $\lVert K^\varepsilon \star \nabla m_t \rVert_{L^\infty}$
is uniformly bounded as
\[
\nabla m^\varepsilon_t = m^\varepsilon_t (- \nabla U^{\varepsilon}
+ \nabla u^\varepsilon_t)
= \frac{\exp(-U^{\varepsilon} - u^\varepsilon_t)}{\int \exp(-U^{\varepsilon} - u^\varepsilon_t)}
(- \nabla U^{\varepsilon} + \nabla u^\varepsilon_t) \in L^1 \cap L^\infty
\]
uniformly in time, thanks to the uniform bound on $\nabla u^\varepsilon_t$.
Now consider the second quantity $\nabla \varphi^\varepsilon_t$.
In the case $s \in [d - 2, d - 1)$,
we have $K = M\nabla g$ with an anti-symmetric $M$,
so the first term $\nabla^2 \cdot (K^\varepsilon \star m^\varepsilon_t)$
vanishes.
In the case $s < d - 2$, we have
\[
\lVert \nabla^2 \cdot (K^\varepsilon \star m^\varepsilon_t) \rVert_{L^\infty}
\leqslant \lVert \nabla K \star \nabla m^\varepsilon_t \rVert_{L^\infty}
\lesssim \lVert \nabla m^\varepsilon_t \rVert_{L^1}^{1 - (s+2)/d}
\lVert \nabla m^\varepsilon_t \rVert_{L^\infty}^{(s+2)/d}\,,
\]
and by the uniform $L^1$ and $L^\infty$ bound on $\nabla m^\varepsilon_t$,
this term is uniformly bounded.
That is to say, in both cases,
the first term $\nabla^2 \cdot (K^\varepsilon \star m^\varepsilon_t)$
is uniformly bounded in $L^\infty$.
As we have $\lVert \nabla^2 U \rVert_{L^\infty} < \infty$,
the third term $(K^\varepsilon \star m^\varepsilon_t) \cdot \nabla^2 U$
is equally uniformly bounded.
So it remains to obtain a uniform bound on the second term
$\nabla (K^\varepsilon \star m^\varepsilon_t) \cdot \nabla U$.
Since $\nabla U^{\varepsilon}$ is of linear growth,
it suffices to prove
\[
\nabla (K \star m^\varepsilon_t)(x)
= (K \star \nabla m^\varepsilon_t)(x)
\lesssim (1 + \lvert x\rvert)^{-1}
\]
uniformly in time.
For this, we use again the decomposition
$K = K_1 + K_2$
in the end of the previous step,
and redoing all the computations,
we find that it is sufficient to uniformly control
\begin{align*}
\int_{\R^d} \lvert x\rvert^q \lvert \nabla m^\varepsilon_t(x) \rvert \dd x
&= \int_{\R^d} \lvert x \rvert^q \lvert - \nabla U(x) + \nabla u^\varepsilon_t(x)\rvert
m^\varepsilon_t (x) \dd x \\
&\lesssim \int_{\R^d} \lvert x\rvert^q (1+\lvert x\rvert) m^\varepsilon_t(x) \dd x
\end{align*}
for some $q > \bigl( 1 - \frac{s+1}{d} \bigr)^{-1}$.
But from Proposition~\ref{prop:MkV-log-Riesz-wp-reg} we know that
the $q$ and $(q+1)$-th moments of $m^\varepsilon_t$ are uniformly bounded.
Hence, $\nabla \varphi^\varepsilon_t$ is uniformly bounded in $L^\infty$ and
by the second claim of Theorem~\ref{thm:HJB-global-lip-hess}, we get that
$\lVert \nabla^2 u^\varepsilon_t \rVert_{L^\infty}$ is uniformly bounded.
Thus $\nabla^2 \ln m^\varepsilon_t = - \nabla^2 U^{\varepsilon} - \nabla^2 u^\varepsilon_t$
is uniformly bounded as well, and taking the limit $\varepsilon \to 0$,
we get the desired result for $u_t$.
\end{proof}

\begin{proof}[Proof of Theorem~\ref{thm:MkV-log-Riesz-quadratic-convergence}]
The proof is similar to that of Theorem~\ref{thm:MkV-log-Riesz-unif-LSI},
except that now the Lipschitz and Hessian bounds converge to zero.
Thus, we first show that the mean field flow $m_t$
converges to the invariant measure $m_*$ and then redo the estimates
on the log-density.

\proofstep{Step 1: Convergence in entropy}
For the initial value $m_0$ such that
\[\nabla u_0 = - \nabla \ln m_0 - \nabla U \in L^\infty,\]
we find, as in the beginning
of the proof of Theorem~\ref{thm:MkV-log-Riesz-unif-LSI},
an approximation $m^\varepsilon_0$ defined by the following:
\[
m^\varepsilon_0 = \frac{\exp(-u_0\star\eta^\varepsilon - U)}
{\int \exp(-u_0\star\eta^\varepsilon - U)}\,.
\]
Set $u^\varepsilon_t \coloneqq - \ln \dd m^\varepsilon_0 / \! \dd m_*$.
We also consider the approximative flow $(m^\varepsilon_t)_{t \geqslant 0}$
solving the mean field Fokker--Planck equation \eqref{eq:MkV-log-Riesz-approx}.
 Notice that, in the case of quadratic potential,
the mollified potential $U^\varepsilon = U \star \eta^\varepsilon$
is nothing but $U$ translated by a constant,
due to the symmetry of $\eta^\varepsilon$.
By Feynman--Kac arguments, we get that $m^\varepsilon_t$
is a classical solution to the Fokker--Planck
and $\nabla^i u^\varepsilon_t$ grows at most linearly for $i = 0$, $1$, $2$.
Thus, we can derive $t \mapsto \mathcal H(m^\varepsilon_t | m_*)$ and get
\begin{align*}
\frac{\dd \mathcal H(m^\varepsilon_t | m_*)}{\dd t}
&= - \mathcal I(m^\varepsilon_t | m_*)
+ \int_{\R^d} \nabla \ln \frac{m^\varepsilon_t(x)}{m_*(x)} \cdot
K^\varepsilon \star (m^\varepsilon_t - m_*) (x) m^\varepsilon_t(\dd x) \\
&= - \mathcal I(m^\varepsilon_t | m_*) - \int_{\R^d} \nabla \ln m_*(x) \cdot
K^\varepsilon \star (m^\varepsilon_t - m_*) (x) m^\varepsilon_t(\dd x) \\
&= - \mathcal I(m^\varepsilon_t | m_*) - \int_{\R^d} \nabla \ln m_*(x) \cdot
K^\varepsilon \star m^\varepsilon_t (x) m^\varepsilon_t(\dd x) \\
&= - \mathcal I(m^\varepsilon_t | m_*)
+ \kappa_U \int x \cdot K^\varepsilon \star m^\varepsilon_t(x) m_t(\dd x) \\
&= - \mathcal I(m^\varepsilon_t | m_*)
+ \kappa_U \iint_{\R^d\times\R^d} x \cdot K^\varepsilon(x-y)
m^\varepsilon_t(\dd x) m^\varepsilon_t(\dd y) \\
&= - \mathcal I(m^\varepsilon_t | m_*)
+ \frac{\kappa_U}{2} \iint_{\R^d \times\R^d} (x - y) \cdot K^\varepsilon(x-y)
m^\varepsilon_t(\dd x) m^\varepsilon_t(\dd y) \\
&= - \mathcal I(m^\varepsilon_t | m_*) \leqslant
- 2\kappa_U \mathcal H(m^\varepsilon_t | m_*)\,.
\end{align*}
Here the second inequality is due to the integration by parts
and the fact that $\nabla \cdot K^\varepsilon = 0$;
the third to the fact that $\nabla \ln m_*(x)$ is parallel to $x$
and $K^\varepsilon \star m_*(x) = K \star (m_* \star \eta^\varepsilon)(x)$
is always orthogonal to $x$,
as $m_* \star \eta^\varepsilon$ is invariant by rotation;
the sixth due to the oddness of $K^\varepsilon$;
and the last due to
\[
x \cdot K^\varepsilon (x)
= x^\top M \nabla g^\varepsilon(x)
\]
and $\nabla g^\varepsilon(x)$ is parallel to $x$.
Then applying Grönwall's lemma and the log-Sobolev inequality for $m_*$, we get
\[
\mathcal H (m^\varepsilon_t | m_*)
\leqslant \mathcal H(m^\varepsilon_0 | m_*) \exp ( - 2\kappa_U t )\,,
\]
and taking the limit $\varepsilon \to 0$ and using the lower semi-continuity
of relative entropy, we recover the first claim.

\proofstep{Step 2: Decaying bound on $x \cdot K \star m_t(x)$
and $\nabla u_t$}
In the following, $C$, $c$ will denote positive reals
that has the same dependency as stated in the theorem
and may change from line to line.
Working again with the approximation $m^\varepsilon_t$,
we get by Pinsker's inequality,
\begin{align*}
\lVert m^\varepsilon_t - m_* \rVert_{L^1}
&\leqslant \exp ( - \kappa_U t ) \sqrt{2 \mathcal H(m^\varepsilon_0 | m_*)} \\
&\leqslant \exp( - \kappa_U t )
\sqrt{ 2\kappa_U^{-1} \mathcal I(m^\varepsilon_0 | m_*) } \\
&\leqslant \exp( - \kappa_U t ) \sqrt{ 2\kappa_U^{-1}
\lVert \nabla u^\varepsilon_0 \rVert_{L^\infty}^2} = C \exp( -\kappa_U t)\,.
\end{align*}
Then, applying Proposition~\ref{prop:log-Riesz-control-L1-Lp}, we get
\[
\lVert K^\varepsilon \star (m^\varepsilon_t - m_*) \rVert_{L^\infty}
\leqslant C \lVert m^\varepsilon_t - m_* \rVert_{L^1}%
^{1 - (s+1)/d}
\lVert m^\varepsilon_t - m_* \rVert_{L^\infty}^{(s+1)/d}
\leqslant C e^{-ct}\,.
\]
We know that $u^\varepsilon_t = - \ln \dd m^\varepsilon_t /\! \dd m_*$
solves the HJB equation
\[
\partial_t u^\varepsilon_t = \Delta u^\varepsilon_t
- \lvert \nabla u^\varepsilon_t \rvert^2
+ \tilde b^\varepsilon_t \cdot \nabla u_t + \varphi^\varepsilon_t
\]
for $\tilde b^\varepsilon_t(x)
= - \kappa_U x - K^\varepsilon \star m^\varepsilon_t(x)$
and $\varphi^\varepsilon_t (x)
= - \kappa_U x \cdot K^\varepsilon \star m^\varepsilon_t(x)$.
Note that $\varphi_t$ satisfies
\[
\varphi^\varepsilon_t (x)
= - \kappa_U x \cdot K^\varepsilon
\star (m^\varepsilon_t - m_*) (x)\,,
\]
since $x \cdot K^\varepsilon \star m_*(x) = 0$ according to the argument
in Step~1.
Thus, we have
\begin{align*}
\varphi^\varepsilon_t(x)
&= - \kappa_U \int_{\R^d} x^\top M \nabla g^\varepsilon(x - y)
(m^\varepsilon_t - m_*)(\dd y) \\
&= - \kappa_U \int_{\R^d} y^\top M \nabla g^\varepsilon(x - y)
(m^\varepsilon_t - m_*)(\dd y)\,,
\end{align*}
as $x^\top M \nabla g^\varepsilon(x) = 0$ for all $x \in \mathbb R^d$.
So $\varphi^\varepsilon_t$ satisfies the bound
\begin{align*}
\lVert \varphi^\varepsilon_t \rVert_{L^\infty}
&= \biggl|
\int_{\R^d} y^\top K^\varepsilon(x - y)
(m^\varepsilon_t - m_*) (\dd y) \biggr| \\
&\lesssim
\int_{B(0,1)} \frac{\lvert y\rvert}{\lvert x - y\rvert^{s+1}}
\lvert m^\varepsilon_t - m_*\rvert (\dd y)
+ \sup_{y : \lvert y-x\rvert \geqslant 1}
\lvert y^\top K^\varepsilon(x - y) \rvert\,
\lVert m^\varepsilon_t - m_* \rVert_{L^1} \\
&\lesssim
\lVert (m^\varepsilon_t - m_*) \1_{B(x,1)}\rVert_{L^q}
+ \lVert m^\varepsilon_t - m_* \rVert_{L^1}
\end{align*}
for $q > \bigl(1 - \frac{s+1}{d}\bigr)^{-1}$, according
to the argument in the proof of Theorem~\ref{thm:MkV-log-Riesz-unif-LSI}.
For the $L^q$ norm we have, by interpolation,
\[
\lVert (m^\varepsilon_t - m_*) \1_{B(x,1)}\rVert_{L^q}
\leqslant
\lVert m^\varepsilon_t - m_*\rVert_{L^q}
\leqslant
\lVert m^\varepsilon_t - m_*\rVert_{L^1}^{1/q}
\lVert m^\varepsilon_t - m_*\rVert_{L^\infty}^{1/p}
\lesssim
\lVert m^\varepsilon_t - m_*\rVert_{L^1}^{1/q}
\]
for $p^{-1}+q^{-1} = 1$.
Thus, $\lVert \varphi^\varepsilon_t \rVert_{L^\infty} \leqslant C e^{-ct}$,
and applying the first claim of Theorem~\ref{thm:HJB-global-lip-hess}, we get
$\lVert \nabla u^\varepsilon_t \rVert_{L^\infty} \leqslant C e^{-ct}$.
The first claim is then proved by taking the limit $\varepsilon \to 0$.

\proofstep{Step 3: Decaying bound on $\nabla^2 u_t$}
First, we have
\[
\lVert \nabla \tilde b^\varepsilon_t
\cdot \nabla u^\varepsilon_t \rVert_{L^\infty}
\leqslant
\lVert \nabla \tilde b^\varepsilon_t \rVert_{L^\infty} \cdot C e^{-ct}
\leqslant
\bigl(\lVert \nabla^2 U\rVert_{L^\infty}
+ \lVert K^\varepsilon \star \nabla m^\varepsilon_t \rVert_{L^\infty}\bigr)
\cdot C e^{-ct}\,,
\]
where
\[
\lVert K^\varepsilon \star \nabla m^\varepsilon_t \rVert_{L^\infty}
\lesssim \lVert \nabla m^\varepsilon_t \rVert_{L^1}^{1 - (s+1)/d}
\lVert \nabla m^\varepsilon_t \rVert_{L^\infty}^{(s+1)/d}\,.
\]
As we have
\[
\nabla m^\varepsilon_t
= - (\nabla U + \nabla u^\varepsilon_t) m^\varepsilon_t
=   - \frac{\nabla U \exp ( - U - u^\varepsilon_t) }
{\int \exp ( - U - u^\varepsilon_t)} - \nabla u^\varepsilon_t\, m^\varepsilon_t
\,
\]
with $\nabla U$ of linear growth and $\nabla u^\varepsilon_t$
being uniformly bounded,
we find that $\nabla m^\varepsilon_t \in L^1 \cap L^\infty$ uniformly.
Thus,
\[
\lVert \nabla \tilde b^\varepsilon_t
\cdot \nabla u^\varepsilon_t \rVert_{L^\infty}
\leqslant C e^{-ct}\,.
\]
The gradient of $\varphi^\varepsilon_t$ reads
\begin{align*}
\nabla \varphi^\varepsilon_t(x)
&= - \nabla \bigl ( \kappa_U x \cdot K^\varepsilon
\star (m^\varepsilon_t - m_*) (x) \bigr) \\
&= - \kappa_U K^\varepsilon \star (m^\varepsilon_t - m_*) (x)
- \kappa_U x \cdot K^\varepsilon \star \nabla (m^\varepsilon_t - m_*) (x)\,.
\end{align*}
The first term on the right hand side is already controlled:
\[
\lvert K^\varepsilon \star (m_t - m_*) (x) \rvert
\lesssim \lVert m_t - m_* \rVert_{L^1}^{1 - (s+1)/d}
\lVert m_t - m_* \rVert_{L^\infty}^{(s+1)/d}
\leqslant C e^{-ct}\,,
\]
and in the following we show that the same is true for the second term.
Again, using the fact that $x \cdot K^\varepsilon(x) = 0$, we get
\begin{align*}
x \cdot K^\varepsilon \star \nabla (m^\varepsilon_t - m_*) (x)
&= \int_{\mathbb R^d} x^\top M \nabla g^\varepsilon(x - y)
\nabla (m^\varepsilon_t - m_*) (\dd y) \\
&= \int_{\mathbb R^d} y^\top M \nabla g^\varepsilon(x - y)
\nabla (m^\varepsilon_t - m_*) (\dd y)\,.
\end{align*}
Following the argument in Step~2, we separate the two cases
$\lvert y - x\rvert < 1$ and $\geqslant 1$, and get
\[
\sup_{x \in \mathbb R^d} \lvert x \cdot K^\varepsilon
\star \nabla (m^\varepsilon_t - m_*) (x)\rvert
\lesssim \lVert \nabla (m^\varepsilon_t - m_*) \rVert_{L^q}
+ \lVert \nabla (m^\varepsilon_t - m_*) \rVert_{L^1}
\]
for $q > \bigl( 1 - \frac{s+1}{d} \bigr)^{-1}$.
Using the explicit density of $m^\varepsilon_t$, we get
\[
\nabla m^\varepsilon_t - \nabla m_* =
- \nabla u^\varepsilon_t\,m^\varepsilon_t
- \nabla U (m^\varepsilon_t - m_*)\,.
\]
The first term satisfies
\[
\lVert \nabla u^\varepsilon_t\,m^\varepsilon_t \rVert_{L^1}
\leqslant \lVert \nabla u^\varepsilon_t \rVert_{L^\infty}
\lVert m^\varepsilon_t \rVert_{L^1}
\leqslant C e^{-ct}\,,
\]
and the second satisfies
\[
\lVert \nabla U (m^\varepsilon_t - m_*) \rVert_{L^1}
\leqslant \lVert \nabla^2 U \rVert_{L^\infty}
W_1 (m^\varepsilon_t, m_*)
\lesssim \sqrt{\mathcal H(m^\varepsilon_t | m_*)} \leqslant C e^{-ct}\,.
\]
Finally, their densities have the $L^\infty$ bounds:
\begin{align*}
\lVert \nabla u^\varepsilon_t\,m^\varepsilon_t \rVert_{L^\infty}
&\leqslant \lVert \nabla u^\varepsilon_t \rVert_{L^\infty}
\lVert m^\varepsilon_t \rVert_{L^\infty} \leqslant C\,, \\
\lVert \nabla U (m^\varepsilon_t - m_*) \rVert_{L^\infty}
&\lesssim \sup_{x\in\mathbb R^d}
(1+\lvert x\rvert) \biggl(
\frac{\exp\bigl(-U(x)\bigr)}{\int \exp(-U)}
+ \frac{\exp\bigl(-u^\varepsilon_t(x)-U(x)\bigr)}
{\int \exp(-u^\varepsilon_t-U)} \biggr)
\leqslant C\,.
\end{align*}
Then, by the same interpolation as in Step~2, we get
$\lVert \nabla \varphi^\varepsilon_t \rVert_{L^\infty} \leqslant Ce^{-ct}$.
Applying the second claim of Theorem~\ref{thm:HJB-global-lip-hess}, we get
\[
\lVert \nabla^2 u^\varepsilon_t \rVert_{L^\infty}
\leqslant
\frac{C e^{-ct}}{\sqrt {t\wedge 1}}
+ \int_0^t \frac{C e^{-cv}}{\sqrt{v \wedge 1}} \cdot C e^{-c(t-v)} \dd v
\leqslant
\frac{C e^{-ct}}{\sqrt {t\wedge 1}}\,.
\]
Taking the limit $\varepsilon \to 0$, we recover the second claim
and this concludes the proof.
\end{proof}

\subsubsection{Proofs of propagation of chaos}
\label{sec:log-Riesz-POC}

\begin{proof}[Proof of Theorem~\ref{thm:log-Riesz-global-POC}]
According to Propositions~\ref{prop:MkV-log-Riesz-wp-reg}
and \ref{prop:ps-log-Riesz-wp}, given the initial values
$m_0$, $m^N_0$, we find respectively approximating sequences
$m^\varepsilon_0$, $m^{\varepsilon, N}_0$ such that
$\ln m^\varepsilon_0 + U^{\varepsilon} \in \mathcal C^\infty_\textnormal{b}$
and $\bm x\mapsto
\ln m^{\varepsilon, N}_0 (\bm x) + \sum_{i=1}^N U^{\varepsilon}(x^i)
\in \mathcal C^\infty_\textnormal{b}$. The solutions of \eqref{eq:MkV-log-Riesz-approx} and of the forward Kolmogorov equation associated to \eqref{eq:ps-log-Riesz-SDE-approx} being unique, we can use the Feynman--Kac representation of Proposition~\ref{prop:feynman-kac} to find that the densities and their classically derivatives
\[
\nabla^i\bigl(\ln m^\varepsilon_t + U^{\varepsilon}\bigr)\,,\quad
\nabla^i\biggl(\ln m^{\varepsilon, N}_t (\bm x)
+ \sum_{i=1}^N U^{\varepsilon}(x^i)\biggr)\,,\qquad i\geqslant 1
\]
exist and grow at most linearly in space (locally in time).
Then in the following we can justify
all the exchanges between limit and integration,
and all the integrations by parts.
Taking the derivative of the relative entropy
$\mathcal H^\varepsilon_t
= \mathcal H \bigl( m^{\varepsilon,N}_t
\big| (m^\varepsilon_t)^{\otimes N} \bigr)$,
and denoting the relative Fisher information by
\[
\mathcal I^\varepsilon_t
= \mathcal I \bigl( m^{\varepsilon,N}_t
\big| (m^\varepsilon_t)^{\otimes N} \bigr)
= \int_{\mathbb R^{dN}} \biggl|\nabla
\ln \frac{m^{\varepsilon,N}_t(\bm x)}
{(m^\varepsilon_t)^{\otimes N}(\bm x)} \biggr|^2
m^{\varepsilon,N}_t(\dd \bm x)\,,
\]
we get
\begin{align*}
\frac{\dd \mathcal H^\varepsilon_t}{\dd t}
&= - \mathcal I^\varepsilon_t + \frac 1{N-1} \sum_{i\neq j}
\int_{\mathbb R^{dN}}
\nabla_i \ln \frac{m^{\varepsilon,N}_t(\bm x)} {m^\varepsilon_t(x^i)} \\
&\hphantom{= - \mathcal I^\varepsilon_t
+ \frac 1{N-1} \sum_{i\neq j} \int_{\mathbb R^{dN}}}
\quad\cdot \bigl(
K^\varepsilon(x^i - x^j) -K^\varepsilon \star m^\varepsilon_t(x^i) \bigr)
m^{\varepsilon,N}_t (\dd \bm x) \\
&= - \mathcal I^\varepsilon_t - \frac 1{N-1} \sum_{i\neq j}
\int_{\mathbb R^{dN}} \nabla \ln m^\varepsilon_t(x^i) \\
&\hphantom{= - \mathcal I^\varepsilon_t - \frac 1{N-1} \sum_{i\neq j}
\int_{\mathbb R^{dN}}}
\quad \cdot
\bigl( K^\varepsilon(x^i - x^j) -K^\varepsilon \star m^\varepsilon_t(x^i) \bigr)
m^{\varepsilon,N}_t (\dd \bm x)\,,
\end{align*}
where $i$, $j$ are summed over $\cco 1,N \ccf$
and the second equality is due to integration by parts
and the fact that $\nabla \cdot K^\varepsilon = 0$.
Noting that the regularized $N$-particle measure $m^{\varepsilon, N}_t$
has density and has no mass on the sets $\{\bm x : x^i = x^j\}$
for $i \neq j$, we find that the second term is equal to, by symmetrization,
\[
- \frac 1{N-1} \sum_{i,j = 1}^N \int_{\mathbb R^{dN}} \phi_t(x^i, x^j)
m^{\varepsilon,N}_t (\dd \bm x)\,,
\]
where the function $\phi_t(\cdot, \cdot)$ is given by
\begin{multline}
\label{eq:def-phi}
\phi_t(x, y)
= \frac 12 K^\varepsilon(x - y)
\cdot \bigl( \nabla \ln m^\varepsilon_t(x)
- \nabla \ln m^\varepsilon_t(y) \bigr) \1_{x \neq y}
\\ - \frac 12 K^\varepsilon \star m^\varepsilon_t(x)
\cdot \nabla \ln m^\varepsilon_t(x)
- \frac 12 K^\varepsilon \star m^\varepsilon_t(y)
\cdot \nabla \ln m^\varepsilon_t(y)\,.
\end{multline}
The function $\phi_t$ satisfies
\begin{align*}
\int_{\mathbb R^d} \phi_t (x, y) m^\varepsilon_t (\dd y) &= 0\,, \\
\int_{\mathbb R^d} \phi_t (y, x) m^\varepsilon_t (\dd y) &= 0
\end{align*}
for all $x \in \mathbb R^d$.
From now on, the symbols $C_i$, $i \in \mathbb N$ denote
a positive number that has the same dependency as $C$, $\rho$ have
in the statement of the theorem.
For the first term in \eqref{eq:def-phi},
we have by Theorem~\ref{thm:MkV-log-Riesz-unif-LSI},
\[
\sup_{x,y\,:\,x\neq y}
\bigl|K^\varepsilon(x - y) \cdot \bigl( \nabla \ln m^\varepsilon_t(x)
- \nabla \ln m^\varepsilon_t(y) \bigr)\bigr|
\leqslant C_1 \lvert M\rvert
\lVert \nabla^2 \ln m_t \rVert_{L^\infty}
\leqslant \frac{C_2 \lvert M\rvert}{\sqrt {t \wedge 1}} \,.
\]
For the last two terms in the definition \eqref{eq:def-phi} of $\phi_t$,
we have by the same theorem,
\begin{align*}
\lvert K \star m^\varepsilon_t(x)\rvert
&\leqslant C_3 \lvert M\rvert (1 + \lvert x\rvert)^{-1}\,, \\
\lvert \nabla \ln m^\varepsilon_t(x) \rvert
&= \lvert \nabla u^\varepsilon_t(x) \rvert + \lvert \nabla U(x) \rvert
\leqslant C_4 (1 + \lvert x\rvert)\,.
\end{align*}
Thus,
\[
\lvert K \star m^\varepsilon_t(x) \cdot \nabla \ln m^\varepsilon_t(x)\rvert
\leqslant C_6 \lvert M\rvert\,.
\]
So the functions $\phi_t$ satisfies
\[
\lVert \phi_t\rVert_{L^\infty}
\leqslant \frac{C_7\lvert M\rvert}{\sqrt{t \wedge 1}}\,.
\]
Therefore, using the convex duality for relative entropy, we get
\[
\frac{\dd \mathcal H^\varepsilon_t}{\dd t}
= - \mathcal I^\varepsilon_t
+ \delta_t \mathcal H^\varepsilon_t
+ \delta_t \ln \int_{\R^{dN}} \exp \biggl( \frac 1{\delta_t (N-1)}
\sum_{i,j=1}^N \phi_t(x^i, x^j) \biggr) (m^\varepsilon_t)^{\otimes N}
(\dd \bm x)\,,
\]
where we set
\[
\delta_t
= \frac{3(1600^2 + 36e^4)C_7 \lvert M\rvert}{\sqrt{t\wedge 1}}\,.
\]
Then, applying the ``concentration'' estimate
\cite[Theorem 4]{JabinWang}
(whose constant is given explicitly in \cite[Theorem 5]{GLBMVortex}), we obtain
\[
\frac{\dd \mathcal H^\varepsilon_t}{\dd t}
\leqslant
- \mathcal I^\varepsilon_t
+ \frac{C_8 \lvert M\rvert}{\sqrt{t\wedge 1}} \mathcal H^\varepsilon_t
+ \frac{C_8 \lvert M\rvert}{\sqrt{t\wedge 1}}
\leqslant
- C_9 \mathcal H^\varepsilon_t
+ \frac{C_8 \lvert M\rvert}{\sqrt{t\wedge 1}} \mathcal H^\varepsilon_t
+ \frac{C_8 \lvert M\rvert}{\sqrt{t\wedge 1}}\,.
\]
We conclude by applying Grönwall's lemma and taking the limit
$\varepsilon \to 0$.
\end{proof}

\begin{proof}[Proof of Theorem~\ref{thm:log-Riesz-quadratic-unif-POC}]
The argument is largely the same as the proof above,
i.e.\ the proof of Theorem~\ref{thm:log-Riesz-global-POC}.
So here we only indicate the differences.
Defining the same $\phi_t$ function as in \eqref{eq:def-phi},
we find that in the quadratic case, we have the following bounds
by Theorem~\ref{thm:MkV-log-Riesz-quadratic-convergence}:
\begin{align*}
\biggl\lVert \nabla \ln \frac{m^\varepsilon_t}{m_*} \biggr\rVert_{L^\infty}
&\leqslant C_1 e^{-ct}\,, \\
\biggl\lVert \nabla^2 \ln \frac{m^\varepsilon_t}{m_*} \biggr\rVert_{L^\infty}
&\leqslant \frac{C_1 e^{-ct}}{\sqrt{t \wedge 1}}\,, \\
\sup_x \lvert x \cdot K^\varepsilon \star (m^\varepsilon_t - m_*)(x)\rvert
&\leqslant C_1 e^{-ct}\,.
\end{align*}
So for the first term in the definition \eqref{eq:def-phi} of $\phi$,
we have, for all $x \neq y$,
\begin{align*}
\MoveEqLeft \bigl|K^\varepsilon(x - y)
\cdot \bigl( \nabla \ln m^\varepsilon_t(x)
- \nabla \ln m^\varepsilon_t(y) \bigr)\bigr| \\
&= \biggl|K^\varepsilon(x - y)
\cdot \biggl( \nabla \ln \frac{m^\varepsilon_t(x)}{m_*(x)}
- \nabla \ln \frac{m^\varepsilon_t(y)}{m_*(y)} \biggr)\biggr| \\
&\lesssim \biggl\lVert \nabla^2 \ln \frac{m^\varepsilon_t}{m_*}
\biggr\rVert_{L^\infty} \leqslant \frac{C_1 e^{-ct}}{\sqrt{t \wedge 1}}\,.
\end{align*}
For the second term, we have
\[
K^\varepsilon \star m^\varepsilon_t (x) \cdot \nabla \ln m^\varepsilon_t(x)
= K^\varepsilon \star (m^\varepsilon - m_*)(x)
\cdot \nabla \ln m_*
+ K^\varepsilon \star m^\varepsilon_t(x)
\cdot \nabla \ln \frac{m^\varepsilon_t(x)}{m_*(x)}\,,
\]
therefore,
\begin{multline*}
\lVert K^\varepsilon \star m^\varepsilon_t \cdot \nabla \ln m^\varepsilon_t
\rVert_{L^\infty}
\leqslant \kappa_U
\sup_x \lvert x \cdot K^\varepsilon \star (m^\varepsilon_t - m_*)(x)\rvert
+ \lVert K^\varepsilon \star m^\varepsilon_t \rVert_{L^\infty}
\biggl\lVert \nabla \ln \frac{m^\varepsilon_t}{m_*} \biggr\rVert_{L^\infty} \\
\leqslant C_2 e^{-ct}\,.
\end{multline*}
Combining the two results above, we derive the decaying $L^\infty$ bound
for $\phi_t$:
\[
\lVert \phi_t \rVert_{L^\infty}
\leqslant \frac{C_3 e^{-ct}}{\sqrt{t \wedge 1}}\,.
\]
Thus, taking the alternative
\[
\delta_t = \frac{3(1600^2+36e^4)C_3 e^{-ct}}{\sqrt{t \wedge 1}}\,,
\]
we get
\[
\frac{\dd \mathcal H^\varepsilon_t}{\dd t}
\leqslant
- \mathcal I^\varepsilon_t
+ \frac{C_4 e^{-ct}}{\sqrt{t\wedge 1}} \mathcal H^\varepsilon_t
+ \frac{C_4 e^{-ct}}{\sqrt{t\wedge 1}}\,.
\]
Finally, we note that, as the Lipschitz constant of
$\ln m^\varepsilon_t / m_*$ tends to zero exponentially,
the perturbed measure $m^\varepsilon_t$ satisfies a $k_t$-LSI with
\[
k_t = 2 \kappa_U \exp ( - C_5 e^{-c't})\,.
\]
Thus, for all $t \geqslant 0$, we have
\[
\mathcal I^\varepsilon_t
\geqslant 2 \kappa_U \exp ( - C_5 e^{-c't})
\mathcal H^\varepsilon_t\,.
\]
We conclude by applying Grönwall's lemma
and taking the limit $\varepsilon \to 0$.
\end{proof}

\appendix

\section{Well-posedness of singular dynamics}
\label{app:log-Riesz-wp}

The mean field well-posedness proof will mainly be based on
the estimates on the convolution with the kernel $K$
in Proposition~\ref{prop:log-Riesz-control-L1-Lp}
and the following elementary result.

\begin{prop}[Growth and stability estimates]
\label{prop:growth-stability}
Let $T > 0$ and $\beta : [0,T] \times \R^d \to \R^d$ be a vector field
that is the sum of a Lipschitz and a bounded part, that is,
$\beta = \beta_\textnormal{Lip} + \beta_\textnormal{b}$
with $\nabla \beta_\textnormal{Lip}$, $\beta_\textnormal{b} \in L^\infty$.
Suppose its divergence is lower bounded:
$(\nabla \cdot \beta)_- \in L^\infty$.
Let $m : [0,T] \to \mathcal P(\mathbb R^d)$ be a probability solution
to the parabolic equation
\[
\partial_t m_t = \Delta m_t - \nabla \cdot (\beta_t m_t)\,.
\]
Then, for all $p \in [2,\infty]$, we have
\[
\lVert m_t \rVert_{L^p} \leqslant C_p \bigl(\lVert m_0 \rVert_{L^p}+1\bigr)
\]
for some $C_p$ depending only on $p$, $d$
and $\lVert (\nabla \cdot \beta)_- \rVert_{L^\infty}$
(notably independent of $t$ and $T$).

Moreover, let $\beta'$ be another vector field satisfying the same conditions
as $\beta$, and let $m'$ be a probability solution to the equation
corresponding to $\beta'$.
Then, for all $p \in \{1\} \cup [2,\infty)$, we have
\[
\lVert m_t - m'_t \rVert_{L^p}
\leqslant e^{C'_p t} \lVert m_0 - m'_0 \rVert_{L^p}
+ C'_p \Bigl( (e^{C'_p t} - 1)\1_{p\geqslant 2} + \sqrt t\1_{p=1}\Bigr)
\sup_{v \in [0,t]} \lVert \beta_v - \beta'_v \rVert_{L^\infty}
\]
for some $C'_p$ depending only on $p$, $d$,
$\lVert (\nabla \cdot \beta)_- \rVert_{L^\infty}$,
$\lVert (\nabla \cdot \beta')_- \rVert_{L^\infty}$,
$\lVert m_0 \rVert_{L^p}$ and $\lVert m'_0 \rVert_{L^p}$.
\end{prop}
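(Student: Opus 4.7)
The proposition combines two $L^p$ estimates for the linear Fokker--Planck flow $\partial_t m_t = \Delta m_t - \nabla\cdot(\beta_t m_t)$, where $\beta_t$ can grow linearly but has divergence bounded from below. My plan is to handle both claims via energy methods, exploiting the balance between the parabolic dissipation and the drift-induced growth, and to invoke auxiliary tools for the endpoint cases $p = \infty$ and $p = 1$.

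For the uniform $L^p$ bound with $p \in [2,\infty)$, I would test the equation against $m_t^{p-1}$ and integrate by parts. The Laplacian produces the dissipation $-\tfrac{4(p-1)}{p^2}\lVert\nabla m_t^{p/2}\rVert_{L^2}^2$, while the drift contribution reduces to $-\tfrac{p-1}{p}\int(\nabla\cdot\beta_t)m_t^p$, bounded above by $\tfrac{p-1}{p}\lVert(\nabla\cdot\beta)_-\rVert_{L^\infty}\lVert m_t\rVert_{L^p}^p$. The crucial ingredient is that Nash's inequality applied to $m_t^{p/2}$, together with $\lVert m_t\rVert_{L^1}=1$ and the Hölder interpolation $\lVert m_t\rVert_{L^{p/2}} \leqslant \lVert m_t\rVert_{L^p}^{(p-2)/(p-1)}$, gives a \emph{super-linear} lower bound $\lVert\nabla m_t^{p/2}\rVert_{L^2}^2 \geqslant c\lVert m_t\rVert_{L^p}^{p+\alpha}$ with $\alpha = 2p/(d(p-1)) > 0$. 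Setting $y(t) = \lVert m_t\rVert_{L^p}^p$, this yields the differential inequality $y' \leqslant c_2 y - c_1 y^{1+\alpha/p}$, which admits the time-uniform bound $y(t) \leqslant \max\bigl(y(0), (c_2/c_1)^{p/\alpha}\bigr)$ because the nonlinear dissipation dominates as soon as $y$ is large. The $L^\infty$ bound is then obtained by letting $p \to \infty$ while tracking the $p$-dependence of constants, or by a Moser-type iteration.

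For the stability estimate, the difference $w_t = m_t - m'_t$ satisfies
\[
\partial_t w_t = \Delta w_t - \nabla\cdot(\beta_t w_t) - \nabla\cdot\bigl((\beta_t - \beta'_t) m'_t\bigr)\,.
\]
For $p \geqslant 2$, I would multiply by $\lvert w_t\rvert^{p-2}w_t$ and integrate. The dissipation absorbs the gradient produced by integrating by parts the source term via Young's inequality, and the remaining contribution is bounded via Hölder with exponents $(p/2, p/(p-2))$:
\[
\int \lvert w_t\rvert^{p-2}\lvert\beta_t - \beta'_t\rvert^2(m'_t)^2 \leqslant \lVert\beta_t - \beta'_t\rVert_{L^\infty}^2 \lVert m'_t\rVert_{L^p}^2 \lVert w_t\rVert_{L^p}^{p-2}\,,
\]
with $\lVert m'_t\rVert_{L^p}$ controlled uniformly by the first part. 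The resulting differential inequality on $\lVert w_t\rVert_{L^p}^2$ integrates via Grönwall to the claimed bound. For $p = 1$, the non-smoothness of $\lvert\cdot\rvert$ at the origin obstructs the direct test-function approach, so I would either regularize $\lvert w_t\rvert$ by $\sqrt{w_t^2 + \varepsilon}$ and pass to the limit $\varepsilon \to 0$, or use the Duhamel representation
\[
w_t = e^{t\Delta}w_0 - \int_0^t e^{(t-s)\Delta}\nabla\cdot\bigl[\beta_s w_s + (\beta_s - \beta'_s)m'_s\bigr]\dd s\,,
\]
combined with the heat-kernel gradient bound $\lVert\nabla e^{s\Delta}\rVert_{L^1\to L^1} \lesssim s^{-1/2}$, which produces a weakly singular Volterra inequality whose solution exhibits the stated $\sqrt t$ factor.

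The main obstacle is the $L^\infty$ growth bound: passing to the limit $p \to \infty$ requires delicate tracking of the $p$-dependence of constants in the Nash step, and an alternative route is to work directly with the stochastic representation $\dd X_t = \beta_t(X_t)\dd t + \sqrt 2\dd B_t$ together with Gaussian-type heat-kernel bounds adapted to linearly growing drifts. A secondary technical point is matching the precise exponential structure of the stability bound, which for $p\geqslant 2$ may require tightening the Grönwall step or falling back on the Duhamel/heat-kernel approach used for $p=1$.
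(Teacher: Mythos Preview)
Your treatment of the $L^p$ growth bound for finite $p$ and of the $L^p$ stability for $p\geqslant 2$ is correct and close to the paper's. For the growth, the paper takes a slightly different route to the $L^\infty$ endpoint: it runs the Nash argument only at $p=2$, and then argues by duality rather than by Moser iteration or by tracking constants as $p\to\infty$. Concretely, the backward (dual) flow $\partial_u h_u = \Delta h_u + \beta_{t-u}\cdot\nabla h_u$ obeys the same Nash-type differential inequality, which yields an $L^1\to L^2$ smoothing bound for the dual semigroup $P_{u,t}$ and hence, by duality, an $L^2\to L^\infty$ bound for the forward Fokker--Planck semigroup $P^*_{u,t}$. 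Composing this with the uniform $L^2$ bound on $m_u$ and the short-time Grönwall estimate gives the uniform $L^\infty$ control directly, sidestepping the constant-tracking you flagged as an obstacle.

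For the $p=1$ stability there is a genuine gap in your proposal, and this is precisely where the paper departs from energy/PDE methods. Both routes you suggest stumble on the linear growth of $\beta$ over the whole space. In the Duhamel representation with the bare heat semigroup, the source term contains $\beta_s w_s$, which need not lie in $L^1$ since no moment hypothesis is made on $m_t$ or $m'_t$; the Volterra inequality therefore does not close. In the regularization $\phi_\varepsilon(w)=\sqrt{w^2+\varepsilon}$, integrating the drift term by parts leaves $\int_{\mathbb R^d}(\nabla\cdot\beta_t)\bigl(\phi_\varepsilon(w_t)-\phi'_\varepsilon(w_t)w_t\bigr)$, and the bracket equals $\varepsilon/\sqrt{w_t^2+\varepsilon}\in[0,\sqrt\varepsilon]$, which is bounded below by a positive constant on the (infinite-measure) region where $w_t$ is small; the integral is therefore not finite. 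The paper handles $p=1$ probabilistically instead: it introduces an auxiliary process $X''$ with drift $\beta$ but initial law $m'_0$, bounds $\lVert m_t-m''_t\rVert_{L^1}$ by a TV-optimal coupling of the initial data (giving $\lVert m_0-m'_0\rVert_{L^1}$ with no exponential factor at all), and bounds $\lVert m'_t-m''_t\rVert_{L^1}$ via Pinsker's inequality and Girsanov's theorem, which produces the $\sqrt t\,\sup_v\lVert\beta_v-\beta'_v\rVert_{L^\infty}$ term. The Girsanov step involves only the difference $\beta-\beta'$, which is bounded by assumption, and is thus insensitive to the growth of $\beta$ itself.
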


\begin{proof}
First, consider the SDE
\[
\dd X_t = \beta_t (X_t) \dd t + \sqrt 2 \dd B_t\,.
\]
Since its drift is the sum of a bounded and a Lipschitz part,
we have the existence of the strong solution and we find that
if $\Law(X_0) = m_0$, then we have the correspondence $\Law(X_t) = m_t$,
by the uniqueness of the PDE.
Moreover, it is known (see e.g.\ \cite{ChampagnatJabinRough}) that if we take
a mollified sequence approaching towards $\beta$,
the SDE solution will also tend to the original one, i.e.\ $X$,
and we have the continuous dependency on the initial value as well.
So without loss of generality,
we can suppose that $\nabla \beta \in \mathcal C^\infty_\textnormal{b}$
and $m_0$ belongs to the Schwartz class.
By a Feynman--Kac argument similar to that
of Proposition~\ref{prop:feynman-kac},
we know that $m_t$ belongs also to the Schwartz class.
Thus, in the following we perform only formal calculations.

\proofstep{Step 1: Growth estimates}
Let $p \geqslant 2$. The $L^p$ norm of $m_t$ satisfies
\begin{align*}
\frac{\dd}{\dd t}\int_{\R^d} m_t^p
& = p \int_{\R^d}
m_t^{p-1} \partial_t m_t \\
& = p \int_{\R^d} m_t^{p-1}
\Bigl( \Delta m_t
- \nabla \cdot ( \beta_t m_t ) \Bigr) \\
& = \int_{\R^d}
\Bigl( - p(p-1)m_t^{p-2} \lvert \nabla m_t \rvert^2
- (p-1)(\nabla \cdot \beta_t) m_t \Bigr) \\
& \leqslant
- p(p-1) \int_{\R^d}
m_t^{p-2} \lvert \nabla m_t \rvert^2
+ (p-1) \lVert (\nabla \cdot \beta)_- \rVert_{L^\infty}\int_{\R^d} m_t^p
\\
& \leqslant
(p-1) \lVert (\nabla \cdot \beta)_- \rVert_{L^\infty} \int_{\R^d} m_t^p
\,,
\end{align*}
where here and in the following $C_p$ denotes a constant
having the same dependencies as in the statement,
and may change from line to line.
We would also denote by $C$ a constant that does not depend on $p$,
but having the same other dependencies.
By Grönwall's lemma, we get
\begin{equation}
\label{eq:MkV-log-Riesz-Lp-growth-short}
\lVert m_t \rVert_{L^p}
\leqslant \exp \biggl( \frac{p-1}{p} (\nabla \cdot \beta)_- \rVert_{L^\infty}
t \biggr) \lVert m_0 \rVert_{L^p}\,,
\end{equation}
and taking $p \to \infty$, we get
\begin{equation}
\label{eq:MkV-log-Riesz-Linfty-growth-short}
\lVert m_t \rVert_{L^\infty}
\leqslant \exp \bigl( \lVert (\nabla \cdot \beta)_- \rVert_{L^\infty} t \bigr)
\lVert m_0 \rVert_{L^\infty}\,.
\end{equation}

Now we show that the two estimates above can be improved
into time-uniform ones.
To this end, define the operator
$\mathcal L_t = \Delta + \beta_t \cdot \nabla$
and its dual
$\mathcal L^*_t = \Delta
- \nabla \cdot ( \beta_t \cdot )$.
Denote by $(P_{u,t})_{0 \leqslant u \leqslant t \leqslant T}$
the time-dependent semi-group generated.
Specializing to $p = 2$ in the $L^p$ computations above, we get
\[
\frac{\dd}{\dd t} \int_{\mathbb R^d} m_t^2
\leqslant - 2 \int_{\mathbb R^d} \lvert \nabla m_t \rvert^2
+ \lVert (\nabla \cdot \beta)_- \rVert_{L^\infty} \int_{\mathbb R^d} m_t^2\,.
\]
The Nash inequality indicates
\[
\lVert m_t \rVert_{L^2}^{1 + 2/d}
\leqslant C_d \lVert m_t \rVert_{L^1}^{2/d} \lVert \nabla m_t \rVert_{L^2}\,,
\]
where $C_d$ depends only on $d$.
So by Grönwall's lemma,
we get the uniform-in-time bound over $\lVert m_t \rVert_{L^2}$:
\[
\lVert m_t \rVert_{L^2}^2
\leqslant \biggl( \frac{C_d^2 \lVert (\nabla\cdot\beta)_- \rVert_{L^\infty}}{2}
\biggr)^{\!d/2}
( 1 - e^{- \kappa t} )^{-d/2}
\lVert m_t \rVert_{L^1}^2\,,
\]
for $\kappa = 2 \lVert (\nabla\cdot\beta)_- \rVert_{L^\infty} / d$.
Note that this bound is independent of
$\lVert m_0 \rVert_{L^\infty}$.
Now we take an arbitrary $h_0 : \mathbb R^d \to [0, \infty)$
of the Schwartz class
and consider the dual evolution $\partial_u h_u = \mathcal L_{t-u} h_u$,
that is,
\[
\partial_u h_u = \Delta h_u + \beta_{t-u} \cdot \nabla h_u\,,
\]
for $u \in [0,t]$, where $t \in [0,T]$.
Deriving the $L^1$ norm of $h_s$ and integrating by parts, we get
\[
\lVert h_u \rVert_{L^1}
\leqslant \exp \bigl( \lVert (\nabla \cdot \beta)_- \rVert_{L^\infty} u \bigr)
\lVert h_0 \rVert_{L^1}\,.
\]
Doing the same for the $L^2$ norm, we get
\begin{align*}
\frac{\dd}{\dd u} \int_{\mathbb R^d} h_u^2
&= 2 \int_{\mathbb R^d} h_u \mathcal L_{T-u} h_u \\
&= 2 \int_{\mathbb R^d} h_u \bigl( \Delta h_u
+ \beta_{t-u} \cdot \nabla h_u \bigr) \\
&= - 2 \int_{\mathbb R^d} \lvert \nabla h_u \rvert^2
- \int_{\mathbb R^d} h_u^2 \nabla \cdot \beta_{t-u} \\
&\leqslant - 2 \int_{\mathbb R^d} \lvert \nabla h_u \rvert^2
+ \lVert (\nabla\cdot\beta)_- \rVert_{L^\infty} \int_{\mathbb R^d} h_u^2\,.
\end{align*}
Again, using the Nash inequality:
\[
\lVert h_u \rVert_{L^2}^{1+2/d}
\leqslant C_d \lVert h_u \rVert_{L^1}^{2/d} \lVert \nabla h_u \rVert_{L^2}
\leqslant
C_d
\exp \bigl( 2 \lVert (\nabla\cdot\beta)_- \rVert_{L^\infty} u / d \bigr)
\lVert h_0 \rVert_{L^1}^{2/d}
\lVert \nabla h_u \rVert_{L^2}\,,
\]
we derive the bound over $\lVert h_t\rVert_{L^2}$:
\[
\lVert P_{t-u, t} h_0 \rVert_{L^2}^2
= \lVert h_u \rVert_{L^2}^2
\leqslant \biggl( \frac{C_d^2 \lVert (\nabla\cdot\beta)_-\rVert_{L^\infty}}{2}
\biggr)^{\!d/2}
( e^{-\kappa u} - e^{-2\kappa u} )^{-d/2}
\lVert h_0 \rVert_{L^1}^2\,,
\]
from which follows the bound on $\lVert P_{t-u, t} \rVert_{L^1\to L^2}$.
So, taking $u = \max ( t/2, t - \kappa^{-1} ) $, we get
\begin{multline}
\label{eq:MkV-log-Riesz-Linfty-growth-long}
\lVert m_t \rVert_{L^\infty}
= \lVert P^*_{u,t} m_{u}\rVert_{L^\infty}
\leqslant \lVert m_u \rVert_{L^2}
\lVert P^*_{u,t} \rVert_{L^2 \to L^\infty} \\
= \lVert m_u \rVert_{L^2} \lVert P_{u,t} \rVert_{L^1 \to L^2}
\leqslant C
(t \wedge 1)^{-d/2} \lVert m_0 \rVert_{L^1}\,.
\end{multline}
So, combining \eqref{eq:MkV-log-Riesz-Linfty-growth-short}
and \eqref{eq:MkV-log-Riesz-Linfty-growth-long}\,,
we get a uniform-in-time bound over $\lVert m_t \rVert_{L^\infty}$:
\begin{equation}
\label{eq:MkV-log-Riesz-Linfty-growth}
\sup_{t \in [0,T]}
\lVert m_t \rVert_{L^\infty}
\leqslant C
\bigl( \lVert m_0 \rVert_{L^1} + \lVert m_0 \rVert_{L^\infty} \bigr)\,.
\end{equation}
Finally, by differentiating $\int m_t$ and integrating by parts,
we get
\begin{equation}
\label{eq:MkV-log-Riesz-L1-growth}
\lVert m_t \rVert_{L^1}
= \lVert m_0 \rVert_{L^1}\,.
\end{equation}
Similarly, interpolating between \eqref{eq:MkV-log-Riesz-Linfty-growth-long}
and \eqref{eq:MkV-log-Riesz-L1-growth}, we get
\[
\lVert m_t \rVert_{L^p}
\leqslant C^{(p-1)/p} (t \wedge 1)^{-(p-1)d/2p}
\lVert m_0 \rVert_{L^1}\,,
\]
and combing with \eqref{eq:MkV-log-Riesz-Lp-growth-short}, we get
\begin{equation}
\label{eq:MkV-log-Riesz-Lp-growth}
\sup_{t \in [0,T]}
\lVert m_t \rVert_{L^p}
\leqslant C_p
\bigl( \lVert m_0 \rVert_{L^1} + \lVert m_0 \rVert_{L^p} \bigr)\,.
\end{equation}

\proofstep{Step 2: Stability estimates}
Now let $\beta'$, $m'$ be the other vector field and the probability solution.
Recall that $m$, $m'$ correspond respectively to the SDE
\begin{alignat*}{2}
\dd X_t &= \beta_t(X_t) \dd t
+ \sqrt 2 \dd W_t\,,&\qquad\Law(X_0) &= m_0\,, \\
\dd X'_t &= \beta'_t(X'_t) \dd t
+ \sqrt 2 \dd W_t\,,&\Law(X'_0) &= m'_0\,. \\
\intertext{%
Now introduce the third SDE, whose drift term is identical to the first,
but initial condition identical to the second:}
\dd X''_t &= \beta_t(X''_t) \dd t
+ \sqrt 2 \dd W_t\,,&\Law(X''_0) &= m'_0\,.
\end{alignat*}
such that $\Proba[X_0 \neq X''_0] = \frac 12 \lVert m_0 - m'_0 \rVert_{L^1}$.
Denote $m''_t = \Law(X''_t)$.
Thus, conditioning on the initial condition, we get
\begin{align*}
\lVert m_t - m''_t \rVert_{L^1}
&\leqslant 2\Proba[X_t \neq X''_t]
\leqslant 2\Proba[X_0 \neq X''_0]
= \lVert m_0 - m'_0 \rVert_{L^1}\,.
\intertext{%
On the other hand, by Pinsker's inequality and Girsanov's theorem, we have}
\lVert m'_t - m''_t \rVert_{L^1}^2
&\leqslant 2 H( m'_t | m''_t )
\leqslant \frac 12 \int_0^t \lVert \beta_v-\beta'_v\rVert_{L^\infty}^2 \dd v\,.
\end{align*}
Combining the two inequalities above yields the $L^1$-stability estimate.
Now, let $p \geqslant 2$ and let us calculate:
\begin{align*}
\MoveEqLeft \frac{\dd}{\dd t} \int_{\mathbb R^d} \lvert m_t - m'_t \rvert^p \\
&= p \int_{\mathbb R^d} \lvert m_t - m'_t \rvert^{p-2}
(m_t - m'_t)
\Bigl( \Delta(m_t - m'_t)
- \nabla \cdot (m_t \beta_t)
+ \nabla \cdot (m'_t \beta'_t) \Bigr) \\
&=
- p(p-1) \int_{\mathbb R^d} \lvert m_t - m'_t \rvert^{p-2}
\lvert \nabla (m_t - m'_t) \rvert^2 \\
&\hphantom{={}}\quad
+ p \int_{\mathbb R^d} \lvert m_t - m'_t \rvert^{p-2}
(m_t - m'_t)
\Bigl( - \nabla \cdot (m_t \beta_t)
+ \nabla \cdot (m'_t \beta'_t) \Bigr) \\
&=
- p(p-1) \int_{\mathbb R^d} \lvert m_t - m'_t \rvert^{p-2}
\lvert \nabla (m_t - m'_t) \rvert^2 \\
&\hphantom{={}}\quad
+ p \int_{\mathbb R^d} \lvert m_t - m'_t \rvert^{p-2}
(m_t - m'_t) \Bigl( - \nabla \cdot \bigl(
(m_t - m'_t) \beta'_t\bigr)
+ \nabla \cdot \bigl(m_t (\beta'_t - \beta_t)\bigr) \Bigr) \\
&\leqslant
(p-1) \int_{\mathbb R^d} \lvert m_t - m'_t \rvert^p
\cdot (- \nabla \cdot \beta'_t)
+ \frac {p(p-1)}{4}
\int_{\mathbb R^d} \lvert m_t - m'_t \rvert^{p-2}
m_t^2 \lvert \beta_t - \beta'_t \rvert^2 \\
&\leqslant
\frac{(p-1)}{2} \lVert (\nabla\cdot\beta')_- \rVert_{L^\infty}
\int_{\mathbb R^d} \lvert m_t - m'_t \rvert^p
+ \frac{(p-1)(p-2)}{4} \int_{\mathbb R^d} \lvert m_t - m'_t \rvert^p \\
&\hphantom{\leqslant{}}\quad
+ \frac{p-1}{2} \lVert m_t \rVert_{L^p}^p
\lVert \beta_t - \beta'_t \rVert_{L^\infty}^p\,.
\end{align*}
Then, using the uniform $L^p$ estimate in the first step,
applying Grönwall's lemma and taking the $p$-th root, we get
the desired result.
\end{proof}

Now we are ready to prove the well-posedness of the mean field dynamics.

\begin{proof}[Proof of Proposition~\ref{prop:MkV-log-Riesz-wp-reg}]

Take a $p \in (1,\infty)$ such that $p^{-1} < 1 - \frac{s+1}{d}$
and let $q$ be its conjugate: $p^{-1} + q^{-1} = 1$.
We also take a $\theta \in (0, d - s - 1)$.

\proofstep{Step 1: Well-posedness}
Let $T \in (0,\infty)$.
We define the functional space:
\[
\mathcal X \coloneqq \mathcal C( [0,T]; L^1\cap L^p \cap \mathcal P)\,.
\]
The space $\mathcal X$ is a complete metric space.
Given $m \in X$, we let $\mathcal T[m]$ be the uniqueness probability
solution to the Cauchy problem
\[
\partial_t \mathcal T[m]_t = \Delta \mathcal T[m]_t
- \nabla \cdot \bigl( (K \star m_t - \nabla U) \mathcal T[m]_t \bigr)\,,\quad
\mathcal T[m]_0 = m_0\,.
\]
According to Proposition~\ref{prop:growth-stability} we know that
$\mathcal T[m] \in \mathcal X$, where the continuity
in $L^1 \cap L^p$ follows from a density argument.
Moreover, by the stability estimate in the proposition,
for all $m$, $m' \in X$, we have
\begin{multline*}
\lVert \mathcal T[m]_t - \mathcal T[m']_t \rVert_{L^r}
\leqslant e^{C'_r t} \lVert m_0 - m'_0 \rVert_{L^r} \\
+ C'_r \Bigl( (e^{C'_r t} - 1)\1_{r=p} + \sqrt t\1_{r=1}\Bigr)
\sup_{v \in [0,t]} \lVert K \star (m_v - m'_v) \rVert_{L^\infty}
\end{multline*}
for $r = 1$, $p$.
But by Proposition~\ref{prop:log-Riesz-control-L1-Lp}, we have
\[
\lVert K \star (m_v - m'_v) \rVert_{L^\infty}
\lesssim \lVert m_v - m'_v \rVert_{L^1}^{1-q(s+1)/d}
\lVert m_v - m'_v \rVert_{L^p}^{q(s+1)/d}\,.
\]
Thus, restricting to the subspace of $\mathcal X$ of common initial value
and letting $T$ be small enough,
we get that the mapping $\mathcal T$ is a contraction in $\mathcal X$.
So a time-local solution exists and is unique.
Thanks to the uniform growth estimates,
this short time interval can be extended infinitely by iteration.
So a unique global solution is recovered
and it satisfies the uniform $L^\infty$ bound
thanks to Proposition~\ref{prop:growth-stability}.
For the continuous dependency on the initial value,
we use the stability estimates on a small time interval
without restricting the initial values to be the same
and iterate infinitely as well.

\proofstep{Step 2: Control of moments}
Given the uniform $L^\infty$ bound obtained above,
we have, according to Proposition~\ref{prop:log-Riesz-control-L1-Lp},
\[
\lVert K \star m_t \rVert_{L^\infty}
\lesssim \lVert m_t \rVert_{L^1}^{1 - (s+1)/d}
\lVert m_t \rVert_{L^\infty}^{(s+1)/d}.
\]
So the contribution from the interaction kernel is bounded.
Then we construct, for $k > 0$, the Lyapunov function
\[
V_k(x) = \sqrt{1 + \lvert x \rvert^{2k}}\,,
\]
and we can easily verify
\[
\bigl( \Delta - \nabla U \cdot \nabla
+ (K \star m_t) \cdot \nabla \bigr) V_k
\leqslant - c_k V_k + C_k\,,
\]
for some $c_k > 0$, $C_k \geqslant 0$.
This implies the uniform bound on the $k$-th moment.

\proofstep{Step 3: Approximation}
Let $(m^\varepsilon_t)_{t \geqslant 0}$ be the flow
corresponding to the mollified kernel $K^\varepsilon$
and potential $U^\varepsilon$.
Applying the stability estimates in Proposition~\ref{prop:growth-stability},
we get
\begin{multline*}
\lVert m_t - m^\varepsilon_t \rVert_{L^r}
\leqslant e^{C'_r t} \lVert m_0 - m^\varepsilon_0 \rVert_{L^r} \\
+ C'_r \Bigl( (e^{C'_r t} - 1)\1_{r=p} + \sqrt t\1_{r=1}\Bigr)
\sup_{v \in [0,t]}
\Bigl(\lVert K \star m_v - K^\varepsilon\star m^\varepsilon_v\lVert_{L^\infty}
+ \lVert\nabla U - \nabla U^\varepsilon\rVert_{L^\infty}\Bigr)\,.
\end{multline*}
Note that the initial $L^p$ error
$\lVert m_t - m^\varepsilon_t\rVert_{L^p} \to 0$
by interpolation between $L^1$ and $L^\infty$.
For the first term in the supremem,
we have
\begin{align*}
\lVert K \star m_v - K^\varepsilon\star m^\varepsilon_v\lVert_{L^\infty}
&\leqslant \lVert K \star (m_v - m^\varepsilon_v) \rVert_{L^\infty}
+ \lVert K \star m^\varepsilon_v -
K \star m^\varepsilon_v\star \eta^\varepsilon \rVert_{L^\infty} \\
&\leqslant \lVert K \star (m_v - m^\varepsilon_v) \rVert_{L^\infty}
+ \varepsilon^\theta [K\star m^\varepsilon_v]_{\mathcal C^\theta}\,.
\end{align*}
By the $L^\infty$ and Hölder estimates
in Proposition~\ref{prop:log-Riesz-control-L1-Lp},
we have the following controls:
\begin{align*}
\lVert K \star (m_v - m^\varepsilon_v) \rVert_{L^\infty}
&\lesssim \lVert m_v - m^\varepsilon_v \rVert_{L^1}^{1-q(s+1)/d}
\lVert m_v - m^\varepsilon_v \rVert_{L^p}^{q(s+1)/d}\,, \\
[K\star m^\varepsilon_v]_{\mathcal C^\theta}
&\lesssim \lVert m^\varepsilon_v\rVert_{L^1}^{1-(s+1+\theta)/d}
\lVert m^\varepsilon_v\rVert_{L^\infty}^{(s+1+\theta)/d}\,.
\end{align*}
For the second term we simply bound
$\lVert \nabla U - \nabla U^\varepsilon\rVert_{L^\infty}
\leqslant \lVert \nabla^2U\rVert_{L^\infty}\varepsilon$.
Since $m^\varepsilon$ is again uniformly bounded in $L^1\cap L^\infty$,
we get an error bound between $m_t$ and $m^\varepsilon_t$ for small $t$
and we iterate infinitely.
\end{proof}

Finally, we prove the well-posedness of the particle system
in the non-attractive sub-Coulombic and Coulombic cases.

\begin{proof}[Proof of Proposition~\ref{prop:ps-log-Riesz-wp}]
Define for $n \in \mathbb N$ the sequence of stopping times:
\[
\tau_n \coloneqq \inf \bigl\{ t \geqslant 0 :
\bigl| X^i_t - X^j_t \bigr| \leqslant 1/n
~\text{for some}~i \neq j \bigr\}\,.
\]
Then the original SDE system \eqref{eq:ps-log-Riesz-SDE} stopped at $\tau_n$
is well defined according to Cauchy--Lipschitz theory.
Consider the ``energy'' functional
\[
E (\bm x)
= E(x^1, \ldots, x^N)
= \frac 12 \sum_{\substack{i,j \in \cco 1, N\ccf\\ i \neq j}} g_s(x^i - x^j)
+ \frac {N\1_{s = 0}}2 \sum_{i=1}^N\,\lvert x^i \rvert^2\,.
\]
The energy functional is always lower bounded, and by It\=o calculus,
we find that $\Expect \bigl[ E\bigl(\bm X_{t \wedge \tau_n}\bigr)\bigr]$
is upper bounded uniformly in $n$.
Then using the Markov inequality for the energy,
we show that $\Proba [ \tau_n \leqslant t ] \to 0$ when $n \to \infty$.
This implies that $\lim_{n \to \infty} \tau_n = \infty$ almost surely,
thus the local well-posedness of the SDE extends to the half line $[0,\infty)$.
That is to say the first claim is proved.

Now prove the second claim.
For each $n \in \mathbb N$, we construct a Lipschitz kernel
$\tilde K_n : \mathbb R^d \to \mathbb R$
such that $\tilde K_n(x) = K(x)$ for $x \in \mathbb R^d$
with $\lvert x \rvert \geqslant 1/n$.
Define the convolution
$\tilde K_n^\varepsilon = \tilde K_n \star \eta^\varepsilon$
and consider the SDE system
\[
\dd \tilde X^{\varepsilon,i}_{n,t}
= -\nabla U^\varepsilon \bigl( \tilde X^{\varepsilon,i}_{n,t} \bigr) \dd t
+ \frac 1{N-1} \sum_{j \in \cco 1, N\ccf \setminus\{i\}}
\tilde K^\varepsilon_n
\bigl(\tilde X^{\varepsilon,i}_{n,t}
- \tilde X^{\varepsilon,j}_{n,t}\bigr) \dd t
+ \sqrt 2 \dd W^i_t\,,
\]
for $i \in \cco 1, N \ccf$,
with initial condition $\tilde{\bm X}^\varepsilon_{n,0} = \bm X_0$.
Define the stopping time
\[
\tau_n^\varepsilon \coloneqq \inf \bigl\{ t \geqslant 0 :
\bigl| \tilde{X}^{\varepsilon,i}_{n,t}
- \tilde{X}^{\varepsilon,j}_{n,t} \bigr|
\leqslant 1/n + \varepsilon ~\text{for some}~i \neq j \bigr\}\,.
\]
By construction, we know
\[
\tilde {\bm X}^\varepsilon_{n,t \wedge \tau_{n}^\varepsilon}
= {\bm X}^\varepsilon_{t \wedge \tau_{n}^\varepsilon}~\text{a.s.}
\]
On the other hand, by Cauchy--Lipschitz theory, we know
\[
\sup_{t \in [0,T]}
\bigl| \tilde{\bm X}^{\varepsilon}_{n,t\wedge \tau_n}
- \bm{X}_{t \wedge \tau_n}\!\bigr|
\leqslant C(n, N, K, U, T) \varepsilon~\text{a.s.}
\]
Thus, for each $n \in \mathbb N$,
there exists $\varepsilon_0 (n, N, K, U, T) > 0$ such that
for all $\varepsilon \leqslant \varepsilon_0$, we have
\[
\sup_{t \in [0,T]}
\bigl| \tilde{\bm X}^{\varepsilon}_{n,t\wedge \tau_n}
- \bm{X}_{t \wedge \tau_n}\!\bigr|
\leqslant \frac 1{3n}~\text{a.s.}
\]
In particular, we get
for all $\varepsilon \leqslant \varepsilon_0$,
$t \leqslant T\wedge \tau_n$ and $i \neq j$,
\[
\bigl| \tilde{X}^{\varepsilon,i}_{n,t}
- \tilde{X}^{\varepsilon,j}_{n,t} \bigr|
\geqslant \frac 1{3n}~\text{a.s.}
\]
Consequently, for $\varepsilon
\leqslant \varepsilon_1 (n, N, K, U, T)
\coloneqq \varepsilon_0 \wedge 1/(13n)$, we have
$T \wedge \tau_n \leqslant \tau_{4n}^\varepsilon$, and therefore,
\begin{multline*}
\sup_{t \leqslant T \wedge \tau_n}
\bigl| \bm X^{\varepsilon}_{t}
- \bm X_t \bigr|
= \sup_{t \leqslant T \wedge \tau_n}
\bigl| \bm X^{\varepsilon}_{t \wedge \tau^{\varepsilon}_{4n}}
- \bm X_t \bigr|
= \sup_{t \leqslant T \wedge \tau_n}
\bigl| \tilde{\bm X}^{\varepsilon}_{4n,t}
- \bm X_t \bigr| \\
\leqslant C(4n, N, K, U, T) \varepsilon~\text{a.s.}
\end{multline*}
Thus, taking $\varepsilon \to 0$, we get
$\bm X^\varepsilon_{t\wedge\tau_n}
\to \bm X_{t\wedge\tau_n}$ a.s.\ for all $t \leqslant T$.
We recover the second claim by using the arbitrariness of $T$
and the fact that $\lim_{n \to \infty} \tau_n = \infty$ a.s.
\end{proof}

\section{Feynman--Kac formula}
\label{app:feynman-kac}

\begin{prop}
\label{prop:feynman-kac}
Let $T > 0$.
Suppose $\beta : [0,T] \times \mathbb R^d \to \mathbb R^d$
and $\varphi : [0,T] \times \mathbb R^d \to \mathbb R$
are measurable functions and suppose that there exists $C > 0$ such that
for all $t \in [0,T]$ and $x \in \mathbb R^d$, we have
\begin{align*}
\lvert \beta(t,x) \rvert &\leqslant C (1 + \lvert x\rvert)\,, \\
\lvert \varphi(t,x) \rvert &\leqslant C (1 + \lvert x\rvert)\,, \\
\bigl| \nabla^k_x \beta(t,x) \bigr| &\leqslant C\,,
\quad\text{for}\ k \in \cco 1, 3\ccf\,, \\
\bigl| \nabla^k_x \varphi(t,x) \bigr| &\leqslant C\,,
\quad\text{for}\ k \in \cco 1, 2\ccf\,.
\end{align*}
Suppose in addition that $f_0 : \mathbb R^d \to \mathbb R$ is measurable
and satisfies, for the same constant $C$,
and for all $x \in \mathbb R^d$,
\[
\lvert \nabla^k f_0(x) \rvert \leqslant
C \exp \bigl(C (1 + \lvert x\rvert)\bigr)\,,
\quad\text{for}\ k \in \cco 0, 2\ccf\,.
\]
Then, the function $f : [0,T] \times \mathbb R^d \to \mathbb R$ defined by
\[
f(t,x) = \Expect \biggl[ \exp \biggl(
\int_0^t \varphi\bigl(t-u, X^{t,x}_u\bigr) \dd u \biggr)
f_0 \bigl(X^{t,x}_t\bigr)\biggr]\,,
\]
where $X^{t,x}_\cdot$ solves
\[
\dd X^{t,x}_u = \beta\bigl(t-s, X^{t,x}_u\bigr) \dd u + \sqrt 2 \dd B_t\,,
\quad u \in [0,t]\,,
\qquad X^{t,x}_0 = x\,,
\]
is a strong solution to the Cauchy problem
\[
\partial_t f = \Delta f + \beta \cdot \nabla f + \varphi f\,,
\qquad f |_{t = 0} = f_0
\]
with the following bound: there exists $C' > 0$ such that
for all $t \in [0,T]$ and $x \in \mathbb R^d$, we have
\[
\lvert \nabla^k f_0(t,x) \rvert \leqslant
C' \exp \bigl(C' (1 + \lvert x\rvert)\bigr)\,,
\quad\text{for}\ k \in \cco 0, 2\ccf\,.
\]
\end{prop}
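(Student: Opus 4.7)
The plan is to first establish existence, moment bounds and spatial differentiability of the stochastic flow $X^{t,x}$, then differentiate the Feynman--Kac representation twice in $x$ under the expectation to get the claimed growth bound on $f$ and its derivatives, and finally use the time-homogeneous Markov property combined with It\^o's formula to derive the parabolic equation.

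First I would use that $\beta(s,\cdot)$ is globally Lipschitz with linear growth, uniformly in $s \in [0,T]$, to obtain by classical SDE theory a pathwise-unique non-exploding strong solution $X^{t,x}$ on $[0,t]$. A Grönwall argument applied to $\Expect[\exp(\lambda |X^{t,x}_u|)]$ yields the bound
\[
\Expect\bigl[\exp(\lambda|X^{t,x}_u|)\bigr]
\leqslant C_\lambda \exp\bigl(C_\lambda (1+|x|)\bigr), \qquad \lambda > 0,\ u \in [0,T],
\]
uniformly in $u$. Differentiating the SDE in $x$, the Jacobian $J^{t,x}_u \coloneqq \na_x X^{t,x}_u$ solves $\dd J_u = \na \beta(t-u, X^{t,x}_u)\, J_u\, \dd u$ with $J_0 = I$, so $|J^{t,x}_u|\leqslant e^{\|\na \beta\|_\infty T}$ almost surely; the bounds on $\na^2 \beta$ and $\na^3 \beta$ then propagate to uniform a.s.\ bounds on the Hessian and third-order space-derivatives of $X^{t,x}_u$ via the corresponding linear SDEs.

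Combining these with the bounds on $\varphi$ and its derivatives, the chain rule applied to the integrand $\exp\bigl(\int_0^t \varphi(t-v, X^{t,x}_v)\,\dd v\bigr) f_0(X^{t,x}_t)$ expresses each $\na^k_x$, $k\leqslant 2$, as a polynomial expression in $J^{t,x}_\cdot$, its space derivative, $\int_0^t \na^j_x \varphi(t-v, X^{t,x}_v)\,\dd v$ and $\na^j f_0(X^{t,x}_t)$ for $j \leqslant k$. The exponential-growth hypothesis on $\na^j f_0$, together with the exponential moment bound on $X^{t,x}_t$ and the linear-growth control on $\exp\bigl(\int_0^t\varphi(t-v,X^{t,x}_v)\,\dd v\bigr)$, yields local uniform integrability in a neighborhood of each $x$, which legitimates differentiation under the expectation and delivers
\[
|\na^k_x f(t,x)| \leqslant C' \exp\bigl(C'(1+|x|)\bigr),\qquad k\in\cco 0,2\ccf.
\]
To obtain the PDE I would invoke the flow identity $f(t,x) = \Expect\bigl[\exp\bigl(\int_0^h \varphi(t-u,X^{t,x}_u)\,\dd u\bigr) f(t-h, X^{t,x}_h)\bigr]$ for $0\leqslant h\leqslant t$ (consequence of the Markov property applied to the augmented process $(X,\int_0^\cdot \varphi\,\dd u)$), apply It\^o's formula to $s\mapsto \exp\bigl(\int_0^s\varphi(t-u,X^{t,x}_u)\,\dd u\bigr)f(t-s,X^{t,x}_s)$ on $[0,h]$, take expectations (the stochastic integral being a true martingale thanks to the moment bounds), then divide by $h$ and let $h\to 0$.

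The hardest point will be tracking the uniform integrability bounds justifying the exchanges of $\na_x$ and $\lim_{h\to 0}$ with $\Expect$. These rest on the exponential moment bounds and the linear growth of $\varphi$, which together just barely give integrability of $\exp\bigl(\int \varphi(\cdot,X)\,\dd v\bigr)$ against the exponentially growing weight coming from $f_0$; a careful Hölder splitting is needed, with one factor absorbing the multiplicative Doob-type weight and another the $f_0$ term. Once these uniform estimates are in place, the continuity in $(t,x)$ of every term in the PDE follows from dominated convergence, and the identity $\partial_t f = \Delta f + \beta\cdot\na f + \varphi f$ holds pointwise, completing the proof.
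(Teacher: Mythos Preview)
Your proposal is correct and follows essentially the same approach as the paper, which itself gives no detailed proof but simply states that the result ``can be easily obtained by differentiating the defining SDE of the process $X^{t,x}_\cdot$'' and refers to the appendix of \cite{ulpoc} for details. Your sketch in fact supplies considerably more detail than the paper does.
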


The result can be easily obtained by differentiating
the defining SDE of the process $X^{t,x}_\cdot$.
We refer readers to e.g.\ the appendix of \cite{ulpoc} for details.

\paragraph{Acknowledgments.}
The work of P.\,M. is supported by the Project CONVIVIALITY ANR-23-CE40-0003
of the French National Research Agency.
The work of Z.\,R. is supported by Finance For Energy Market Research Centre.

\printbibliography

\end{document}